\newtheorem{theorem}{Theorem}[section]
\newtheorem{lemma}[theorem]{Lemma}
\newtheorem{proposition}[theorem]{Proposition}
\newtheorem{definition}[theorem]{Definition}
\newtheorem{corollary}[theorem]{Corollary}
\newtheorem*{theorem*}{Theorem}
\theoremstyle{definition}
\newtheorem{example}[theorem]{Example}
\theoremstyle{remark}
\newtheorem{remark}[theorem]{Remark}
\newcommand{\lin}{BL}
\newcommand{\set}{\operatorname{set}}
\newcommand{\F}{\mathcal{F}}
\newcommand{\C}{\mathcal{C}}
\newcommand{\m}{\mathfrak{m}}
\newcommand{\rmod}{\operatorname{Mod}_R}
\newcommand{\smod}{\operatorname{Mod}_S}
\newcommand{\crep}{\operatorname{Rep}_R(\C)}
\newcommand{\pair}{\mathcal{P}}
\newcommand{\repS}{\operatorname{Rep}_S}
\newcommand{\pathidealn}{I_{P_n}}
\newcommand{\pathn}{P_n}
\newcommand{\maxfam}{\F_{\m}}
\newcommand{\infring}{S_{\infty}}
\newcommand{\infmod}{\operatorname{Mod}_{\infring}}
\newcommand{\submodinf}{s_t^{\infty}}
\newcommand{\syzmodinf}{\sigma_t^{\infty}}
\newcommand{\edgecat}{\operatorname{CellRes}_E(n)}
\newcommand{\edgecatbl}{\operatorname{CellRes}_E(n+\frac{n(n-1)}{2})}
\begin{document}

\title{Families of cellular resolutions, their syzygies, and stability}

\author{Laura Jakobsson}

\date\today

\maketitle
\begin{abstract}
We study families of cellular resolutions by looking at them as a category and applying tools from representation stability. We obtain sufficient conditions on the structure of the family to have a noetherian representation category and apply this to concrete examples of families. In the study of syzygies we make use of defining the syzygy module as a representation and find conditions for the finite generation of this representation. We then show that many families of cellular resolutions coming from powers of ideals satisfy these conditions and have finitely generated syzygies, including the maximal ideals and edge ideals of paths. 
\end{abstract}
\section{Introduction}
Cellular resolutions are a powerful construction for resolving modules given by monomial ideals and they contain a lot of structure (\cite{bps},\cite{def}). 
 Computational evidence on cellular resolutions have long suggested that in some families we have finitely generated syzygies given by some finite number of resolutions in the family; however, there has not been a satisfactory proof of this. In this paper we study families of cellular resolutions from a categorical perspective and by using representations of categories that is motivated by computational results on syzygies. Categorical representation stability, in particular the tools that were introduced by Sam and Snowden in \cite{SS}, has been useful while studying noetherianity and finite generation of representations.
 Using tools from representation stability we establish sufficient conditions for families of cellular resolutions to have finitely generated syzygies.  
 The main idea is to define syzygies as a representation of the family and then show finite generation for this representation using noetherianity and covering of the cell complexes. This method allows us to use families with non-minimal cellular resolutions to study the syzygies.
 
The main result of this paper is on the conditions when a family of cellular resolutions has finitely generated syzygies.

\begin{theorem*}
If $\F$ is a family of cellular resolutions with noetherian representation category $\mathrm{Rep}_S(\F)$ such that the cell complex supporting $F_i$ is covered by the cell complexes supporting $F_{j}$, $j<i$, for all $i$ large enough.
Then the syzygy representation $\sigma_t$ is finitely generated for all $t$.
\end{theorem*}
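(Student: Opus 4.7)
The plan is to realize the syzygy representation $\sigma_t$ as a subrepresentation of a naturally constructed, finitely generated representation in $\mathrm{Rep}_S(\F)$, and then invoke the noetherian hypothesis to conclude finite generation for free. Concretely, for each object $F_i$ of the family I would build an auxiliary representation $s_t$ whose value on $F_i$ is the free $S$-module on the $t$-cells of the cell complex supporting $F_i$, with transition maps induced by the morphisms of $\F$ acting on cells (these are exactly the degree-$t$ free modules appearing in the resolutions, so this is the standard ambient object). The syzygy representation $\sigma_t$ then embeds into $s_t$ as the subrepresentation of $t$-cycles/boundaries, since each $t$-th syzygy is represented by a $t$-chain in the corresponding cellular resolution.

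The key step, and the one where the hypothesis really does work, is to prove that $s_t$ is finitely generated. By assumption there is some $N$ such that for every $i \geq N$ the cell complex supporting $F_i$ is covered by the complexes supporting $F_j$ with $j<i$. I would translate this covering into a statement about generators of $s_t(F_i)$: every $t$-cell of the complex supporting $F_i$ arises as the image of a $t$-cell from some $F_j$ under the morphism in $\F$, so that $s_t(F_i)$ is generated, as an $S$-module, by images of $s_t(F_j)$ under the structure maps of the representation $s_t$. An induction on $i\geq N$ then reduces the problem to the finitely many objects with index below $N$, each of which contributes finitely generated $s_t(F_i)$ because the underlying cell complexes are finite. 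Taking the union of those finite generating sets yields a finite generating set for $s_t$.

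With $s_t$ finitely generated and $\mathrm{Rep}_S(\F)$ noetherian, the subrepresentation $\sigma_t \subseteq s_t$ is automatically finitely generated, which is the desired conclusion. The main obstacle I anticipate is the careful bookkeeping in Step 2: one must verify that the geometric notion of ``covered by'' at the level of cell complexes genuinely corresponds, at the level of chain modules, to the structure maps of $s_t$ being jointly surjective onto $s_t(F_i)$ for $i \geq N$. In particular, the covering data coming from $\F$ must act on cells compatibly with the functor defining $s_t$, so that the inductive generation argument propagates through morphisms of the category rather than collapsing to a merely set-theoretic observation with no representation-theoretic consequence. Once that compatibility is recorded, the proof is essentially a two-line deduction from noetherianity.
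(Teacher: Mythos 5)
Your proposal follows the same route as the paper: the paper defines the $p$-th module representation $s_p$ (free modules on the cells), proves in Lemma \ref{dcover} that the covering hypothesis is equivalent to finite generation of $s_p$ (including the same reduction, by iterating/composing covering maps, to finitely many complexes below the threshold index), shows in Proposition \ref{subrep} that $\sigma_p$ is a subrepresentation of $s_p$, and concludes by noetherianity. The compatibility issue you flag --- that the cellular covering must translate into joint surjectivity of the structure maps of $s_t$ --- is exactly the content the paper isolates in Lemma \ref{dcover}, so your outline is correct and essentially identical in strategy.
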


Other than being able to show that certain families have finitely generated syzygies, using categorical representations to study cellular resolutions can give new insights and it equips them with a structure. In particular it seems to be suited to studying cellular resolutions of powers of ideals. We illustrate this by studying the specific examples of families, the powers of maximal ideals and edge ideals of paths are among the examples we prove to have finitely generated syzygies. 

The organisation of the paper is as follows. In Section 2 we cover the needed background material for cellular resolutions, their morphisms and the necessary tools from representation stability. In Section 3 we give the definition of linear families, detailed  example of powers of $I=(x,y,z)$, and the results on noetherianity and Gr\"obner properties.
Section 4 is devoted to studying the syzygy representation and its relation to the covering of cell complexes and contains the main theorem. 
Sections 5 and 6 cover explicit families including edge ideals of paths and maximal ideals.
Finally in Section 7 we suggest a way of dealing with a family of cellular resolutions where each resolution is over a different ring. This section addresses some of the limitations the setting of the earlier sections have had and we suggest an alternative way of approaching the family categorically that allows us to lift the results of the previous sections. 
\subsection*{Acknowledgements}
I would like to thank Alexander Engstr\"om for many helpful discussions and his guidance. 
\section{Preliminary results}
\subsection{Cellular resolutions}
In this section we define the needed notions of cellular resolutions and their category. More detailed information on cellular resolutions can be found in \cite{cca} and on the categorical aspects in \cite{me}. 
\begin{definition}
 A labelled cell complex $X$ is a regular CW-complex with monomial labels on the faces. The  vertices of $X$ have labels $\bf{x}^{\bf{a}_1},\bf{x}^{\bf{a}_2},\ldots,\bf{x}^{\bf{a}_r}$ where $\bf{a}_1,\bf{a}_2,\ldots,\bf{a}_r\in\mathbb{N}^n$. The faces $F$ of $X$ have the least common multiple of the monomial labels of the vertices it contains, $x^{\bf{a}_F}=\mathrm{lcm}\{x^{\bf{a}_v}:v\in F\}$. The label on the empty face is 1, i.e. $\bf{x}^{\bf{0}}$.
\end{definition}
\begin{definition} 
  The \emph{degree} of a face $F$ is the exponent vector $\bf{a}_F$ of the monomial label.
\end{definition}

\begin{definition}
Let $S(-\bf{a}_F)$ be the free $S$-module with a generator $F$ in degree $\bf{a}_F$. Then the \emph{cellular complex} $\mathcal{F}_X$ is given by $(\mathcal{F}_X)_i=\bigoplus_{\substack{F\in X\\ \mathrm{dim}F=i-1}}S(-\bf{a}_F)$ with a differential  $$\partial(F)=\sum_{G\subset F}\operatorname{sign}(G,F)x^{\bf{a}_F-\bf{a}_G}G.$$  
We call the chain complex $\mathcal{F}_X$ a \emph{cellular resolution} if it is acyclic, that is, $\mathcal{F}_X$ has non-zero homology only at degree 0.
\end{definition}

\begin{proposition}[\cite{cca}, Def 4.3]
The differentials in the  cellular complex can also be described by monomial matrices, with the columns and rows having the corresponding faces as labels and the scalar entries coming from the usual differential for reduced chain complex. The free $S$-modules of $\mathcal{F}_X$ are then the ones represented by the matrices. 
\end{proposition}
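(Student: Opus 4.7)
The plan is to read off the matrix of $\partial$ in the face-indexed bases of the cellular complex. Recall that $(\mathcal{F}_X)_i=\bigoplus_{\dim F=i-1}S(-\bf{a}_F)$ is a free module with one generator for each $(i-1)$-face of $X$, and similarly for $(\mathcal{F}_X)_{i-1}$. First I would index the columns of the matrix by the $(i-1)$-faces $F$ and the rows by the $(i-2)$-faces $G$, labelling each column and row by the monomial $x^{\bf{a}_F}$ or $x^{\bf{a}_G}$ that records the shift of the corresponding summand. The $(G,F)$-entry is then, directly from the defining formula for $\partial$, $\operatorname{sign}(G,F)\,x^{\bf{a}_F-\bf{a}_G}$ whenever $G$ is a codimension-one face of $F$, and zero otherwise.

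To see that this is a bona fide monomial matrix, I would verify that every nonzero entry is a scalar times a monomial of the correct total degree to define a degree-preserving map $S(-\bf{a}_F)\to S(-\bf{a}_G)$. Since $G\subset F$, the vertex set of $G$ is contained in that of $F$, so the lcm of the vertex labels of $G$ divides that of $F$; hence $x^{\bf{a}_G}$ divides $x^{\bf{a}_F}$ and $\bf{a}_F-\bf{a}_G\in\mathbb{N}^n$, making $x^{\bf{a}_F-\bf{a}_G}$ a genuine monomial. Its multidegree is exactly $\bf{a}_F-\bf{a}_G$, which is the unique shift for which multiplication by $x^{\bf{a}_F-\bf{a}_G}$ sends the degree-$\bf{a}_F$ generator of $S(-\bf{a}_F)$ to a degree-$\bf{a}_F$ element of $S(-\bf{a}_G)$. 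The scalars $\operatorname{sign}(G,F)\in\{\pm 1\}$ are by construction the incidence coefficients of the reduced cellular chain complex of $X$ with the fixed orientation used to define $\partial$, which is the role the statement ascribes to the scalar part of the monomial matrix.

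Conversely, one recovers the cellular complex from the matrix data alone: the column labels $\bf{a}_F$ and row labels $\bf{a}_G$ specify the summands $S(-\bf{a}_F)$ and $S(-\bf{a}_G)$ appearing in $(\mathcal{F}_X)_i$ and $(\mathcal{F}_X)_{i-1}$, and multiplying the matrix against the basis element indexed by $F$ reproduces $\partial(F)=\sum_{G\subset F}\operatorname{sign}(G,F)x^{\bf{a}_F-\bf{a}_G}G$. The only point requiring any care is the consistency of sign conventions — one must check that the $\operatorname{sign}(G,F)$ coming from the chosen orientation of the regular CW-structure agrees with the incidence coefficient of the reduced chain complex used implicitly in the monomial matrix description — but since $\partial$ was defined using exactly these incidence numbers, this is a routine unwinding of the standard CW-differential rather than a genuine obstacle.
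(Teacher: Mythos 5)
Your verification is correct and is exactly the routine unwinding one would expect: the paper itself offers no proof of this statement, importing it directly from Miller--Sturmfels (Def.\ 4.3 of \cite{cca}), and your argument --- reading off the matrix entries $\operatorname{sign}(G,F)\,x^{\mathbf{a}_F-\mathbf{a}_G}$, checking via $G\subset F$ that $\mathbf{a}_F-\mathbf{a}_G\in\mathbb{N}^n$ so each entry is a genuine monomial inducing a degree-preserving map $S(-\mathbf{a}_F)\to S(-\mathbf{a}_G)$, and identifying the scalars with the incidence coefficients of the reduced chain complex --- is the standard justification.
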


Another useful result for cellular resolutions makes use of order of vectors. If {\bf a} and {\bf b} are two vectors in $\mathbb{N}^n$, we have ${\bf a}\preceq{\bf b}$ if ${\bf b}-{\bf a}\in \mathbb{N}^n$. Let $X$ be a labelled cell complex, then we can define the subcomplex $X_{\preceq {\bf b}}$ to be the complex consisting of all the faces with labels $\preceq {\bf b}$. Then we have the following.
\begin{proposition}[\cite{cca}, Prop 4.5]
\label{bound}
The cellular free complex $\F_X$ supported on $X$ is a cellular resolution if and only if $X_{\preceq{\bf b}}$ is acyclic over {\bf k} for all ${\bf b}\in\mathbb{N}^n$.
\end{proposition}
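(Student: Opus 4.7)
The key observation is that $\F_X$ is naturally $\mathbb{Z}^n$-graded: every generator $F$ sits in multidegree $\bf{a}_F$ and the differential, which multiplies $G$ by the monomial $x^{\bf{a}_F-\bf{a}_G}$ of degree $\bf{a}_F-\bf{a}_G$, preserves this grading. Consequently, acyclicity of $\F_X$ can be checked one multidegree at a time, and I would prove the proposition by identifying each graded strand with the augmented chain complex of some $X_{\preceq \bf{b}}$.

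First I would compute the $\bf{b}$-graded piece of each free module. Since $S(-\bf{a}_F)$ has a one-dimensional $\bf{b}$-graded component, spanned by $x^{\bf{b}-\bf{a}_F}F$, exactly when $\bf{a}_F \preceq \bf{b}$, and is zero otherwise, the $\mathbf{k}$-vector space $(\F_X)_i^{\bf{b}}$ has a basis indexed by the $(i-1)$-dimensional faces of $X_{\preceq \bf{b}}$. Next I would check that the restriction of $\partial$ to this strand matches the reduced/augmented cellular boundary: indeed $\partial(x^{\bf{b}-\bf{a}_F} F) = \sum_{G\subset F}\operatorname{sign}(G,F)\, x^{\bf{b}-\bf{a}_G} G$, and by definition of $\bf{a}_F$ as an lcm we have $\bf{a}_G \preceq \bf{a}_F \preceq \bf{b}$, so every summand lies in the same $\bf{b}$-graded piece. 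Stripping the (invertible) monomial coefficients from the basis vectors yields exactly the reduced cellular differential of $X_{\preceq \bf{b}}$ with $\mathbf{k}$-coefficients, with the empty face contributing the degree $-1$ term.

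Putting these identifications together, the complex $(\F_X)^{\bf{b}}$ is isomorphic as a complex of $\mathbf{k}$-vector spaces to the augmented chain complex $\tilde C_{\bullet-1}(X_{\preceq \bf{b}};\mathbf{k})$. Hence $H_i\big((\F_X)^{\bf{b}}\big)\cong \tilde H_{i-1}(X_{\preceq \bf{b}};\mathbf{k})$ for every $i\geq 0$. Since a $\mathbb{Z}^n$-graded complex is acyclic iff every graded strand is, the cellular complex $\F_X$ has $H_i=0$ for all $i\geq 1$ iff $\tilde H_j(X_{\preceq \bf{b}};\mathbf{k})=0$ for all $j\geq 0$ and all $\bf{b}\in\mathbb{N}^n$, which is exactly the asserted condition that every $X_{\preceq\bf{b}}$ is $\mathbf{k}$-acyclic.

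There is no real conceptual obstacle; the work is bookkeeping. The subtle points I would be careful about are the sign and index shifts (so that the homological degree $0$ piece of $\F_X$ corresponds to the empty face, i.e.\ to $C_{-1}$ of $X_{\preceq\bf{b}}$), and verifying that $\bf{a}_F\preceq\bf{b}$ together with $G\subset F$ forces $\bf{a}_G\preceq\bf{b}$, which is what ensures the strand $(\F_X)^{\bf{b}}$ is stable under $\partial$ and equals the chain complex of the full subcomplex $X_{\preceq\bf{b}}$ rather than a proper subcomplex of it.
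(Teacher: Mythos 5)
The paper states this proposition without proof, importing it verbatim from Miller--Sturmfels (\cite{cca}, Prop.\ 4.5), so there is no internal argument to compare against; your write-up is exactly the standard proof from that source, namely decomposing $\F_X$ into its $\mathbb{Z}^n$-graded strands and identifying the degree-${\bf b}$ strand with the augmented cellular chain complex of $X_{\preceq{\bf b}}$ over $\mathbf{k}$. The argument is correct, and you rightly isolate the two points that need checking: the homological index shift (the empty face sitting in degree $0$ of $\F_X$ corresponds to $\tilde C_{-1}$, so acyclicity of $\F_X$ away from degree $0$ matches vanishing of $\tilde H_j$ for $j\geq 0$) and the fact that $G\subseteq F$ together with ${\bf a}_F\preceq{\bf b}$ forces ${\bf a}_G\preceq{\bf b}$, so each strand is the chain complex of the full subcomplex $X_{\preceq{\bf b}}$ and not something smaller.
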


For category-theoretic purposes one needs a morphism between two cellular resolutions. This was defined in \cite{me}.
Informally a morphism between two cellular resolutions is a pair of a chain map and a cellular map that do the same thing on the corresponding cells and generators of modules. More formally we get the definitions below.
\begin{definition}
\label{inducedlabelmap}
Let $g:X\rightarrow Y$ be a cellular map between two labelled cell complexes $X$ and $Y$ with label ideals $I$ and $J$ respectively. 
The set map $\varphi_g: I\rightarrow J$ is the map defined by the action of $g$, i.e. label $m_x\in I$ maps to $m_y\in J$ if and only if the face $x$ labelled with $m_x$ maps to the faces $y_1,\ldots, y_r$ labelled by $m_{y_1},\ldots,m_{y_r}$with $m_y=\mathrm{lcm}(m_{y_1},\ldots,m_{y_r})$ under $g$, and $m_x\in I$ maps to $0$ if and only if the face labelled by $m_x$ is not mapped to anything in $Y$. 
\end{definition}

\begin{definition}
We say that a cellular map $g:X\rightarrow Y$ is compatible with a chain map ${\bf f}:\mathcal{F}_X\rightarrow \mathcal{F}_Y$ if $f_0(x)=\varphi_g(x)$ for all $x\in I$, and $f_i$ maps the generator $e_x$, associated to face $x\in X$, in $\mathcal{F}_{X,i}$ to some linear combination of the generators $e_{y_i}$, $i\in\{1,2,\ldots,r\}$, associated to $y_i\in Y$ with the coefficients in $S$ if and only if $g$ maps $x$ to union of $y_1,y_2,\ldots, y_r$. 
\end{definition}
This definition of maps between cellular resolutions is very restrictive and balances the topological and algebraic properties. One thing to note is that the component $f_0$ in the chain map must be a $1\times1$ matrix, i.e. multiplication by some element of $S$.  However, this does leave us with plenty of maps including Morse maps coming from Morse theory and the change of orientation on the cell complex. 
\begin{definition}
\label{DEF}
 Define {\bf CellRes} to be the category given by:
\begin{itemize}
\item A class of objects consisting of cellular resolutions, supported on any regular CW-complex,
\item A set of morphisms for any pair of objects $\mathcal{F}_X$ and $\mathcal{G}_Y$ with individual maps given by the compatible pairs $({\bf f},f)$.
\end{itemize}
\end{definition}

The morphisms between cellular resolutions are an important part in most of the results relating to representation stability. Here we list a few observations and notions for them. The first definition concerns the terminology which we will use later in the paper.

\begin{definition}
Let $({\bf f},f)$ be a morphism of cellular resolutions. It is called a \emph{multiplication by monomial $m$} or a morphism corresponding to a multiplication  if $\varphi_f$ is a multiplication by a monomial $m$.
\end{definition}

\begin{proposition}
Embeddings in cell complexes supporting cellular resolutions give cellular morphisms.
\end{proposition}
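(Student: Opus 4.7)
The plan is to construct the morphism explicitly and then verify the two required compatibility properties. Throughout, let $g\colon X\hookrightarrow Y$ be a cellular embedding of regular CW-complexes supporting cellular resolutions $\F_X$ and $\F_Y$ respectively, with the labels on $X$ inherited from $Y$ in the sense that $\mathbf{a}_F^X=\mathbf{a}_{g(F)}^Y$ for every face $F$ of $X$ (this is the natural interpretation, since under an embedding each face of $X$ is identified with a unique face of $Y$ and the monomial labels are determined by the vertex labels).

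First I would define the chain map $\mathbf{f}=(f_i)$ on free generators by $f_i(e_F)=e_{g(F)}$ for every face $F$ of $X$ of dimension $i-1$. Because $g$ is a cellular embedding, $g(F)$ is a single face of the same dimension, and the agreement of labels ensures that $e_F$ and $e_{g(F)}$ sit in the same multidegree; thus each $f_i$ extends to a well-defined, degree-preserving $S$-linear map $(\F_X)_i\to(\F_Y)_i$. In particular $f_0$ is the identity multiplication by $1\in S$, which is a $1\times 1$ matrix as required.

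Next, I would verify that $\mathbf{f}$ commutes with the differentials. Using the cellular differential formula
\[
\partial(F)=\sum_{G\subset F}\operatorname{sign}(G,F)\,x^{\mathbf{a}_F-\mathbf{a}_G}G,
\]
the calculation reduces to checking that (i) the face poset relations $G\subset F$ in $X$ correspond bijectively under $g$ to the relations $g(G)\subset g(F)$ in $Y$, which holds because an embedding of regular CW-complexes is a poset embedding on cells; (ii) the incidence signs agree, since $g$ restricts to a homeomorphism on each closed cell and its boundary, so it preserves the chosen orientations used to define $\operatorname{sign}(G,F)$; and (iii) the monomial factors $x^{\mathbf{a}_F-\mathbf{a}_G}$ match, since labels are preserved by the embedding. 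Combining these yields $\partial^Y\circ f_i=f_{i-1}\circ\partial^X$.

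Finally, I would verify the compatibility conditions with $g$ from the definition. The induced set map $\varphi_g\colon I\to J$ sends a vertex label $m_v=x^{\mathbf{a}_v^X}$ to $m_{g(v)}=x^{\mathbf{a}_{g(v)}^Y}=m_v$, which agrees with $f_0(m_v)=1\cdot m_v$. For higher $i$, the generator $e_F$ is sent by $f_i$ to the single generator $e_{g(F)}$, and indeed $g$ maps the face $F$ to the single face $g(F)$, so the compatibility condition on $f_i$ is satisfied with the linear combination reducing to one term. The main obstacle, and really the only nontrivial bookkeeping, is matching the orientation-induced signs of the two cellular differentials under $g$; this is a standard property of cellular maps between regular CW-complexes, and the remainder of the argument is formal.
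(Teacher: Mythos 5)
Your proposal is correct and follows essentially the same route as the paper: take $\varphi_g$ to be the identity on labels, send each generator $e_F$ to the generator of the image cell, and check that the resulting pair is compatible. The only cosmetic difference is that the paper sidesteps the orientation bookkeeping by simply allowing the matrix entries to be $1$ or $-1$ according to where $g$ sends each cell, whereas you argue the signs match outright; either way the argument goes through.
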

\begin{proof}
Let $X$ and $Y$ be two labelled cell complexes supporting cellular resolutions $F_X$ and $F_Y$, respectively, and suppose that we have an embedding $f:X\rightarrow Y$. Since the embedding of labelled cell complexes respects the labelling we get that the map $\varphi_f$ is an identity map. Then we can find a chain map with $f_0=\varphi_f$ and choose the rest of the matrices with entries 1 or -1 based on where $f$ takes the higher dimensional cells. Then these form a compatible pair and we have a morphism of cellular resolutions corresponding to embeddings.
\end{proof}
\subsection{Representation stability}
\label{repstab}
In this section we review the concepts needed from representation stability as defined by Sam and Snowden \cite{SS}.  Let $R$ be a commutative noetherian ring. This assumption is not necessary but in our setting almost all rings are commutative polynomial rings with finitely many variables. Hence, they are also noetherian. Let $\rmod$ be the category of $R$ modules.

Throughout this section we assume the category $\C$ to be essentially small. Recall that this means the category $\C$ is equivalent to some small category or alternatively it is locally small and has small number of isomorphism classes as objects (assuming the axiom of choice). For more on the category theory definitions and theorems one can look at \cite{ml} for example. We want the category $\C$ to be of "combinatorial nature", which informally means objects are finite sets, possibly with some extra structures, and morphisms are functions with extra structure allowed. 
 
\begin{definition}
Let $\C$ be an essentially small category. A \emph{representation} or a $\C$-module over $R$ is a functor 
$$\C\rightarrow \rmod.$$
\end{definition}
The representations of $\C$ form a category denoted by $\crep$. This is an abelian functor category with the morphisms between representations given by natural transformations. 

Next we want to look at some definitions related to the properties of individual representations (or modules). Let $M$ be a representation of $\C$.  A subrepresentation $N$ of $M$ is a subfunctor of $M$, that is 
\begin{definition}
Let $M$ be a representation of $\C$. An \emph{element} of $M$ is an element of $M(x)$ for some $x\in \C$. 
\end{definition}
Having defined an element one can then talk about the generating sets for representations. 
\begin{definition}
\label{fgrep}
Let $S$ be any set of elements of $M$. The smallest subrepresentation of $M$ containing $S$ is said to be \emph{generated by S}. The representation $M$ is said to be \emph{finitely generated} if it is generated by some finite set of elements.
\end{definition}

The following representation is one of the main tools used to study noetherianity for the representations.
\begin{definition}[\cite{SS}]
The \emph{principal projective} representation for an element $x$ is the functor $P_x$ given by $P_x(y)=R[\textrm{Hom}(x,y)]$.
\end{definition}
\begin{remark}
In the paper of Sam and Snowden \cite{SS} they do not explicitly give the morphism part of the principal projective. The natural choice of maps between the Hom sets in the principal projective are post compositions, so this gives then a morphism between $P_x(y)$ and $P_x(z)$ if we have a morphism $f: y\rightarrow z$.
\end{remark}
An important fact about the principal projectives is that a representation of $\C$ is finitely generated if and only if it is a quotient of  a finite direct sum of principal projectives. 

\begin{definition}
Let $M\in\crep$. We say $M$ is \emph{noetherian} if every ascending chain of subobjects stabilises, or equivalently every subrepresentation is finitely generated. 

The category $\crep$ is \emph{noetherian} if every finitely generated representation in it is noetherian.
\end{definition}

Next we have the following result.
\begin{proposition}[\cite{SS}, Prop 3.1.1.]
\label{noeth}
The category $\textrm{Rep}_R(\C)$ is noetherian if and only if every principal projective is noetherian.
\end{proposition}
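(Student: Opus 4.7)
The plan is to prove the two implications separately, with the backward direction carrying essentially all the content and relying on the remark, already made in the excerpt just above the proposition, that every finitely generated $M\in\crep$ is a quotient of a finite direct sum of principal projectives.

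For the forward direction I would first observe that each principal projective $P_x$ is itself finitely generated: the identity morphism $\mathrm{id}_x\in\mathrm{Hom}(x,x)$ gives a distinguished element of $P_x(x)$, and for any $y$ and any $f\in\mathrm{Hom}(x,y)$ the $R$-basis element $f\in P_x(y)$ is obtained from $\mathrm{id}_x$ via the functoriality of $P_x$ (post-composition by $f$). Hence $\mathrm{id}_x$ generates $P_x$, and by the assumed noetherianity of $\crep$ applied to the finitely generated object $P_x$, we conclude that $P_x$ is noetherian.

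For the backward direction I would package two standard abelian-category facts in the functor category $\crep$. First, any quotient of a noetherian object is noetherian, since subrepresentations of the quotient pull back to subrepresentations of the original under the quotient map, and an ascending chain in the quotient pulls back to a chain in the original which must stabilise. Second, in any short exact sequence $0\to A\to B\to C\to 0$ in $\crep$, if both $A$ and $C$ are noetherian then so is $B$: given an ascending chain $B_1\subseteq B_2\subseteq\cdots$ in $B$, the induced chains $B_i\cap A$ in $A$ and $(B_i+A)/A$ in $C$ each stabilise, and a short diagram chase shows the original chain stabilises. Applying the latter fact inductively to split exact sequences shows that a finite direct sum $\bigoplus_{i=1}^n P_{x_i}$ of noetherian principal projectives is noetherian. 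Given any finitely generated $M\in\crep$, express it as a quotient of such a finite direct sum; then $M$ is a quotient of a noetherian object, hence noetherian. Since this holds for every finitely generated $M$, the category $\crep$ is noetherian.

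I do not expect a real obstacle here: the argument is structurally standard, and the only step that requires care is verifying that the "kernel and cokernel noetherian imply middle noetherian" argument is available in $\crep$, which it is because $\crep$ is an abelian functor category (so subobjects, intersections, sums, and quotients are computed pointwise from $\rmod$, where all of this is classical). The likeliest place for a slip would be in the pointwise verification of the diagram chase in the extension step, so I would write that out carefully once and then invoke it.
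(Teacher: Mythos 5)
Your proof is correct. The paper itself gives no proof of this proposition --- it is quoted directly from Sam and Snowden \cite{SS} --- and your argument (principal projectives are generated by the identity element for the forward direction; quotients and finite direct sums of noetherian objects are noetherian, combined with the fact that finitely generated representations are quotients of finite direct sums of principal projectives, for the backward direction) is exactly the standard argument given there.
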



One way to study representations is to use pullback functors, and we will use this in Section \ref{blideals}. Given a functor $\Phi:\C\rightarrow\C'$ there is a \emph{pullback functor} $\Phi^*:\operatorname{Rep}_R(\C')\rightarrow\crep$. The following finiteness property is particularly useful.

\begin{definition}[\cite{SS}, Def 3.2.1.]
Let $\Phi:\C\rightarrow\C'$ be a functor. Then $\Phi$ satisfies \emph{property (F)} if given any object $x\in\C'$ there exists finitely many $y_1,y_2,\ldots,y_n\in\C$ and morphisms $f_i:x\rightarrow\Phi(y_i)$ such that for any $y\in\C$ and any morphism $f:x\rightarrow\Phi(y)$ there exists a morphims $g:y_i\rightarrow y$ such that $f=\Phi(g)\circ f_i$.
\end{definition}

\begin{proposition}[\cite{SS},Prop 3.2.3.]
A functor $\Phi:\C\rightarrow\C'$ satisfies the property (F) if and only if $\Phi^*:\operatorname{Rep}_R(\C')\rightarrow\crep$ maps finitely generated objects to finitely generated objects. 
\end{proposition}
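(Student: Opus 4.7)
The plan is to reduce both directions to checking the behaviour of $\Phi^*$ on the principal projectives $P_x$ with $x \in \C'$. The pullback functor $\Phi^*$ is defined pointwise, hence exact and commuting with arbitrary direct sums. Combined with the observation mentioned after Definition \ref{fgrep}, that every finitely generated representation is a quotient of a finite direct sum of principal projectives, this shows that $\Phi^*$ preserves finite generation if and only if $\Phi^*(P_x)$ is finitely generated for every $x \in \C'$.

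Next I would unfold the pullback: for $y \in \C$,
\[
\Phi^*(P_x)(y) \;=\; P_x(\Phi(y)) \;=\; R[\operatorname{Hom}_{\C'}(x, \Phi(y))],
\]
and the structure map induced by a morphism $g : y \to z$ in $\C$ sends the basis element $f : x \to \Phi(y)$ to $\Phi(g) \circ f : x \to \Phi(z)$. So elements of $\Phi^*(P_x)$ are $R$-linear combinations of morphisms $x \to \Phi(y)$, and the subrepresentation generated by a set of morphisms $\{f_i : x \to \Phi(y_i)\}$ consists, at any $z$, of the $R$-span of the elements $\Phi(g) \circ f_i$ over morphisms $g : y_i \to z$ in $\C$.

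For the forward direction, assume $\Phi$ satisfies property (F). Given $x \in \C'$, choose $f_1, \dots, f_n$ as in the definition. Property (F) says that any basis element $f : x \to \Phi(y)$ of $\Phi^*(P_x)(y)$ equals $\Phi(g) \circ f_i$ for some $i$ and some $g : y_i \to y$, so the $f_i$ generate all of $\Phi^*(P_x)$. For the converse, assume $\Phi^*(P_x)$ is finitely generated. A generating set consists of $R$-linear combinations of morphisms; replacing each combination by its constituent basis elements yields a finite generating set $\{f_i : x \to \Phi(y_i)\}$ of basis elements. Any $f : x \to \Phi(y)$, being an element of $\Phi^*(P_x)(y)$, must then lie in the $R$-span of elements $\Phi(g) \circ f_i$ with $g : y_i \to y$. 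Since $f$ is itself a basis element of $R[\operatorname{Hom}_{\C'}(x, \Phi(y))]$, comparing coefficients on the two sides forces $\Phi(g) \circ f_i = f$ for at least one pair $(i, g)$, which is exactly property (F).

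The main delicate point is this last step, where one extracts an honest factorization $f = \Phi(g) \circ f_i$ from the a priori weaker statement that $f$ lies in the $R$-span of elements of that form. This works because the generating set can be reduced to single basis elements and because $R$-linear combinations of basis elements in a free $R$-module are uniquely determined by their coefficients, so the coefficient of $f$ on the right-hand side can equal $1$ only if some summand literally equals $f$. Aside from this point, the proof is a careful unfolding of the definitions of principal projective, pullback, and generation of representations.
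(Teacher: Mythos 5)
Your proof is correct, and it is essentially the standard argument (the one given by Sam and Snowden for their Proposition 3.2.3): reduce to the principal projectives $P_x$ via exactness of the pointwise-defined pullback, identify $\Phi^*(P_x)(y)$ with $R[\operatorname{Hom}_{\C'}(x,\Phi(y))]$, and observe that property (F) is precisely finite generation of this representation, with the coefficient-comparison step in the free module handling the converse. Note that the paper itself does not prove this statement but only cites it from \cite{SS}, so there is no in-paper argument to compare against; your write-up fills that gap correctly.
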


Finally, we move on to covering the definition of Gr\"obner bases for categories and Gr\"obner categories from Sam and Snowden \cite{SS}. Those will make an appearance  in Section \ref{grobnerfams}.
Let $S:\C\rightarrow Set$ denote a fixed functor to sets and let $S_x:\C\rightarrow Set$ be the functor given by $S_x(y)=\operatorname{Hom}(x,y)$.
A \emph{principal subfunctor} is a subfunctor of $S$ generated by a single element. 
\begin{definition}
The poset $|S|$ is the set of principal subfunctors of $S$ that is partially ordered by reverse inclusion.
\end{definition}
Let $P$ denote the free module $R[S]$ and write $e_f$ for the element of $P(x)$ corresponding to $f\in S(x)$. An element of $P(x)$ is \emph{monomial} if it is of the form $\lambda e_f$ for some $\lambda\in R$. A subrepresentation $M$ is monomial if it is spanned by the monomials it contains. 

To define Gr\"obner basis we need a concept of initial representations and terms. 
The functor $S$ is \emph{orderable} if there is a choice of well-order on each $S(x)$ such that the induced map $S(x)\rightarrow S(y)$ is strictly order preserving for every $x\rightarrow y$.

Suppose $S$ has ordering $\preceq$ on it. Then the \emph{initial term} of an object $\alpha\in P(x)$ is $\operatorname{init}(\alpha)=\lambda_g e_g$, where $g=\operatorname{max}_{\preceq}\{f|\lambda_f\neq0\}$ and $\alpha$ is a direct sum of monomials. Let $M$ be a subfunctor of $P$. The \emph{initial representation} of $M$ consists of $\operatorname{init}(M)(x)$ that is the $R$-span of $\operatorname{init}(\alpha)$ for $\alpha\neq0\in M(x)$.

\begin{definition}
Let $M$ be a subrepresentation of $P$. A set of elements $G$ of $M$ is a Gr\"obner basis of $M$ is $\{\operatorname{init}(\alpha)|\alpha\in G\}$ generates $\operatorname{init}(M)$.
\end{definition}

\begin{theorem}[\cite{SS}, Thm 4.2.4.]
Let $S$ be orderable and $|S|$ be noetherian. Then every subrepresentation of $P$ has finite Gr\"obner basis. In particular, $P$ is a noetherian object of $\crep$.
\end{theorem}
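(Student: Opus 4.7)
The plan is to follow the standard Gr\"obner-basis philosophy: reduce the statement to a claim about monomial subrepresentations, which is then settled by the noetherianity of $|S|$. First I would record the basic equivalence that a subrepresentation $M \subseteq P$ has a finite Gr\"obner basis if and only if its initial representation $\operatorname{init}(M)$ is finitely generated as a monomial subrepresentation of $P$. Indeed, lifting any finite set of monomial generators $\lambda_1 e_{g_1},\ldots,\lambda_n e_{g_n}$ of $\operatorname{init}(M)$ to elements $\alpha_i \in M$ with $\operatorname{init}(\alpha_i) = \lambda_i e_{g_i}$ produces a Gr\"obner basis by construction, and the converse is immediate from the definition. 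So the question reduces to showing that every monomial subrepresentation of $P$ is finitely generated.

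For this I would set up a bijection between monomial subrepresentations $N \subseteq P$ and up-sets of $|S|$. From $N$ one reads off $U_N = \{\langle f \rangle : e_f \in N\}$; for any morphism $h \colon x \to y$ in $\C$ one has $\langle S(h)(f) \rangle \subseteq \langle f \rangle$, hence $\langle f \rangle \leq \langle S(h)(f) \rangle$ in the reverse-inclusion order of $|S|$, and the subfunctor condition on $N$ forces $U_N$ to be upward-closed. Conversely an up-set $U \subseteq |S|$ yields the monomial subrepresentation $N_U(x) = R\text{-span}\{e_f : \langle f \rangle \in U\}$, and these two constructions are mutually inverse. Under this bijection, finite generation of $N_U$ corresponds exactly to finite generation of $U$ as an up-set. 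The noetherianity hypothesis on $|S|$ is precisely the statement that every up-set has finitely many minimal elements, so every monomial subrepresentation of $P$ is finitely generated; combined with the previous reduction, every $M \subseteq P$ admits a finite Gr\"obner basis $G = \{\alpha_1,\ldots,\alpha_n\}$.

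To extract noetherianity of $P$ I would then verify that such a $G$ generates $M$ as a subrepresentation. Let $M' \subseteq M$ be the subrepresentation generated by $G$ and suppose $\beta \in M(x) \setminus M'(x)$ with $\operatorname{init}(\beta) = \lambda e_g$. Because $\lambda e_g \in \operatorname{init}(M)(x)$ and $\{\operatorname{init}(\alpha_i)\}$ generates $\operatorname{init}(M)$, one can write $\lambda e_g = \sum_i r_i P(h_i)(\operatorname{init}(\alpha_i))$ for suitable $r_i \in R$ and morphisms $h_i$ in $\C$; since each $S(h_i)$ is strictly order-preserving, $\gamma := \sum_i r_i P(h_i)(\alpha_i) \in M'(x)$ has $\operatorname{init}(\gamma) = \lambda e_g$, so $\beta - \gamma \in M(x) \setminus M'(x)$ has strictly smaller initial term. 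Iterating produces an infinite strictly decreasing sequence in the well-ordered set $S(x)$, a contradiction; hence $M = M'$ is finitely generated and $P$ is noetherian.

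The main obstacle is setting up the monomial/up-set bijection carefully: the map $f \mapsto \langle f \rangle$ is many-to-one, so one must argue at the level of principal subfunctors, and one must confirm that Sam and Snowden's notion of a noetherian poset coincides with the condition that every up-set has finitely many minimal elements. Once that is in place, the Gr\"obner reduction is routine and, crucially, uses only $R$-linear combinations rather than division, so it goes through over any commutative noetherian $R$.
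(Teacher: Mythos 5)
The paper itself offers no proof of this statement: it is imported verbatim from Sam--Snowden \cite{SS} and used as a black box, so there is no in-paper argument to compare against. Your reconstruction is the standard one --- identify monomial subrepresentations of $P$ with up-sets of $|S|$, deduce finite generation of $\operatorname{init}(M)$ from noetherianity of $|S|$, lift to a finite Gr\"obner basis, and run a descent argument in the well-ordered sets $S(x)$ to show a Gr\"obner basis generates --- and over a field of coefficients it is essentially complete.

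The genuine gap is the coefficient ring. Section 2.2 of the paper fixes $R$ to be an arbitrary commutative noetherian ring, and your closing claim is that the argument ``goes through over any commutative noetherian $R$.'' It does not as written: the bijection between monomial subrepresentations and up-sets of $|S|$ uses that $\lambda e_f\in N$ with $\lambda\neq 0$ forces $e_f\in N$, a field-coefficient fact. Already for $R=\mathbb{Z}$, the one-object one-morphism category, and $S(x)$ a point, the monomial subrepresentations of $P$ are all ideals of $\mathbb{Z}$ while $|S|$ has only one nonempty up-set. Over general noetherian $R$ one must track, for each $\langle f\rangle\in|S|$, the ideal of leading coefficients $\{\lambda\in R: \lambda e_f\in N\}$; finite generation of monomial subrepresentations then requires a Dickson-type argument combining noetherianity of $|S|$ with that of $R$, and in the division step the leading coefficient of $\beta$ must lie in the ideal generated by the leading coefficients of those $\alpha_i$ whose leading indices map onto $g$ --- a real constraint on the Gr\"obner basis, not a free $R$-linear combination. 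Two smaller points: Sam--Snowden's ``noetherian poset'' means every up-set is generated by finitely many elements (equivalently, descending chain condition plus no infinite antichains), which is strictly stronger than having finitely many minimal elements; and $\operatorname{init}(\sum_i r_iP(h_i)(\alpha_i))$ need not equal $\lambda e_g$ if the top terms of several summands cancel, so in the descent step you should reduce to a single term $rP(h)(\alpha_i)$, which is possible over a field precisely because $\operatorname{init}(M)$ is monomial.
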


\begin{definition}
Let $\C$ be an essentially small category.  Then $\C$ is called \emph{Gr\"obner} if for all $x\in \C$ the functor $S_x$ is orderable and the poset $|S_x|$ is noetherian.

We say that $\C$ is \emph{quasi-Gr\"obner} if there exists some Gr\"obner category $\C'$ such that there is a functor $\Phi:\C'\rightarrow\C$ that is essentially surjective and satisfies property (F). 
\end{definition}

\begin{theorem}[\cite{SS}, Thm 4.3.2.]
Let $\C$ be quasi-Gr\"obner, then $\crep$ is noetherian.
\end{theorem}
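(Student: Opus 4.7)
The plan is to combine the three tools assembled in this preliminary section: Proposition \ref{noeth}, which reduces noetherianity of a representation category to noetherianity of its principal projectives; the characterisation of property (F) as preservation of finite generation by the pullback functor; and the preceding theorem on Gröbner categories, which delivers noetherian principal projectives whenever $S_x$ is orderable with $|S_x|$ noetherian. I would fix $x\in\C$ and show that $P_x$ is a noetherian object of $\crep$, which by Proposition \ref{noeth} is enough.

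Unwinding the quasi-Gröbner hypothesis, pick a Gröbner category $\C'$ together with a functor $\Phi:\C'\to\C$ that is essentially surjective and satisfies property (F). Since $P_x$ is generated by $\mathrm{id}_x$, it is finitely generated in $\crep$, and the characterisation of property (F) forces $\Phi^{*}P_x$ to be finitely generated in $\operatorname{Rep}_R(\C')$. On the other hand, because $\C'$ is Gröbner, the preceding theorem shows that every principal projective in $\operatorname{Rep}_R(\C')$ is noetherian, and so Proposition \ref{noeth} applied to $\C'$ yields that the entire category $\operatorname{Rep}_R(\C')$ is noetherian. In particular, $\Phi^{*}P_x$ is a noetherian object there.

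It remains to transfer noetherianity back along $\Phi^{*}$. Given an ascending chain $M_1\subseteq M_2\subseteq\cdots$ of subrepresentations of $P_x$, the pulled-back chain $\Phi^{*}M_1\subseteq \Phi^{*}M_2\subseteq\cdots$ stabilises inside $\Phi^{*}P_x$. The main (mild) obstacle is showing that $\Phi^{*}$ is injective on subrepresentations of a fixed object: if $N_1,N_2\subseteq P_x$ satisfy $\Phi^{*}N_1=\Phi^{*}N_2$, then $N_1$ and $N_2$ agree on every object of the form $\Phi(y)$, and essential surjectivity together with naturality of the defining isomorphisms transports this equality to $N_1(x')=N_2(x')$ for every $x'\in\C$. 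Hence the original chain stabilises, so $P_x$ is noetherian, and applying Proposition \ref{noeth} once more concludes that $\crep$ is noetherian.
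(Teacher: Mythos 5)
The paper does not prove this statement: it is quoted directly from Sam and Snowden \cite{SS} as background, so there is no in-paper argument to compare against. Your proof is correct and is essentially the standard one from the cited source: reduce to principal projectives via Proposition \ref{noeth}, use property (F) to conclude that $\Phi^{*}P_x$ is finitely generated in the noetherian category $\operatorname{Rep}_R(\C')$, and transfer stabilisation of ascending chains back along $\Phi^{*}$ using essential surjectivity (your injectivity-of-$\Phi^{*}$-on-subobjects step is the content of \cite{SS}, Prop.\ 3.2.4, and it does go through since an isomorphism $\Phi(y)\cong c$ carries $N_i(\Phi(y))$ onto $N_i(c)$ for any subfunctor $N_i$ of $P_x$).
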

In the case the category is directed as well as small we can use the following proposition to determine if it is Gr\"obner.
First note that an \emph{admissible order} is a well-order on a set that also satisfies if we have any two elements $u\leq v$ then for any third element $t$, for which $ut$ and $vt$ make sense, we have $ut\leq vt$.
\begin{proposition}[\cite{SS}, Prop 4.3.4.]
\label{bettergrobner}
If C is a directed category, then as posets $|\C_x|\cong |S_x|$ for all objects $x$. In particular, $\C$ is Gr\"obner if and only if for all $x$ the set $|\C_x|$ admits an admissible order and is noetherian as a poset.
\end{proposition}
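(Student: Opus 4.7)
The plan is to build an explicit poset isomorphism $\Phi:|\C_x|\to|S_x|$ and then deduce the ``in particular'' clause by translating orderability and noetherianity across $\Phi$. Concretely, I would send a morphism $f:x\to y$ (viewed as an element of $|\C_x|$) to the principal subfunctor of $S_x$ generated by $f$, which assigns to each $w$ the set $\{\phi\circ f:\phi\in\operatorname{Hom}(y,w)\}$. Surjectivity of $\Phi$ is built into the definition: every principal subfunctor of $S_x$ is of the form just described for some morphism $f$ out of $x$.

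The key step is injectivity, and this is where directedness is used. Suppose $f:x\to y$ and $g:x\to z$ generate the same principal subfunctor. Evaluating the equality of subfunctors at $y$ gives $\psi:z\to y$ with $\psi g=f$, and at $z$ gives $\eta:y\to z$ with $\eta f=g$. Then $\psi\eta\in\operatorname{End}(y)$ and $\eta\psi\in\operatorname{End}(z)$; directedness forces each endomorphism monoid to be trivial, so $\psi$ and $\eta$ are mutually inverse isomorphisms, and the second directedness axiom then forces $y=z$. This collapses $\psi=\eta=\operatorname{id}$ and hence $f=g$, so $\Phi$ is a bijection onto $|S_x|$.

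Next I would check order compatibility. By definition, $\langle f\rangle\supseteq\langle g\rangle$ exactly when $g$ lies in $\langle f\rangle$ at the target of $g$, that is, when $g$ factors through $f$. Since $|S_x|$ carries the reverse-inclusion order, this matches the natural factorisation order on morphisms out of $x$ that defines $|\C_x|$, so $\Phi$ is an isomorphism of posets. Noetherianity is preserved by any poset isomorphism, giving one half of the ``in particular'' statement.

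For the orderability half, I would translate the definitions termwise under $\Phi$: a well-order on each $S_x(y)$ restricts to a linear order on the elements of $|\C_x|$ whose target is $y$, and the requirement that $S_x(y)\to S_x(w)$ induced by $h:y\to w$ be strictly order-preserving becomes exactly the admissibility condition $f<f'\Rightarrow hf<hf'$ on $|\C_x|$. The main subtle point in the whole argument is the injectivity step: one must invoke \emph{both} parts of the directedness hypothesis (no nontrivial endomorphisms, plus antisymmetry of the hom relation) to prevent two distinct morphisms from generating the same subfunctor; without directedness, isomorphisms at the target would produce genuinely different morphisms with the same principal subfunctor, and the isomorphism $\Phi$ would fail.
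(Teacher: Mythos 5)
The paper does not prove this statement: it is quoted verbatim from Sam--Snowden (\cite{SS}, Prop.~4.3.4) as part of the background in Section 2, so there is no internal proof to compare against. Judged on its own, your argument is essentially the standard one: the map $\Phi$ sending $f:x\to y$ to the principal subfunctor $\langle f\rangle$, surjectivity by definition of ``principal'', order-compatibility via the observation that $\langle f\rangle\supseteq\langle g\rangle$ iff $g$ factors through $f$, and transfer of noetherianity across the poset isomorphism are all correct.

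The one place you over-reach is the injectivity step. Directedness here means only that every endomorphism is an identity; there is no ``second directedness axiom'' forcing distinct objects admitting maps in both directions to be equal. From $\psi g=f$ and $\eta f=g$ you correctly get $\psi\eta=\operatorname{id}_y$ and $\eta\psi=\operatorname{id}_z$, so $\eta:y\to z$ is an isomorphism with $\eta f=g$ --- but this gives $f\cong g$ in the coslice category $\C_x$, not $y=z$ or $f=g$. Fortunately that is all you need, since $|\C_x|$ is by definition the set of \emph{isomorphism classes} of objects of $\C_x$; the extra collapse you invoke is both unavailable and unnecessary. A second, smaller gloss is the orderability translation: an admissible order is a single well-order on all of $|\C_x|$, whereas orderability of $S_x$ is a family of well-orders on the fibres $S_x(y)$, so one direction is restriction (using that $\operatorname{Hom}(x,y)\hookrightarrow|\C_x|$ is injective when endomorphisms are trivial) while the other requires assembling the fibre orders into one global well-order and checking strictness of the induced maps; this deserves a sentence rather than ``translate termwise''. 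Neither issue is fatal, and the substance of the proof is right.
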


\section{Families of cellular resolutions and noetherian representations}
Cellular resolutions as a whole form a too big class of objects to study via representation stability. Thus we will restrict to the families of cellular resolutions that are essentially small, or small in many cases, and are interesting on their own as a restricted class of cellular resolutions. 

\begin{definition}
A family of cellular resolutions is an infinite sequence of cellular resolutions such that as a subcategory of {\bf CellRes} it is essentially small category. 
\end{definition}
\begin{remark}
From a representation stability point of view, any essentially small subcategory would suffice. If we have an indexed sequence then the family will have countably many objects, which clearly form a set. So in most cases the only possible cause for the family to not be essentially small are the morphisms. 
\end{remark}

The above definition of a family of cellular resolutions does not restrict the morphisms in any other way that on the essentially small part. In particular one can choose the cellular resolutions at random without needing any morphisms between them. However these kind of families are not the main interest of our study and we will restrict to ones with more structure. 
Note that if we do have any morphisms in the family the Hom sets must be sets and not a class for the essentially small condition to be satisfied.
A common way that we use in this paper is to restrict the morphisms to only one compatible pair for each chain map. 
The following example presents some explicit examples of families of cellular resolutions. 

\begin{example}

\begin{itemize}
\item[ ]
\item The constant family where each of the resolutions is the same. 
\item $S/I^n$ where $I$ is any monomial ideal and the consecutive maps are multiplications by generators of $I$.
\item Cellular resolutions of edge ideals of paths with embeddings as maps between them.
\end{itemize}
\end{example}

\begin{definition}
\label{linearfam}
Let $\F: F_1\rightarrow F_2\rightarrow\ldots\rightarrow F_i\rightarrow\ldots $ be a family of cellular resolutions. We say that $\F$ is \emph{linear} if there is at least one morphism $f_{i,i+1}: F_i\rightarrow F_{i+1}$ between consecutive cellular resolutions, and  the other  morphisms are compositions of those, i.e  for any $f_{i,i+k}: F_i\rightarrow F_{i+k}$ there exists some consecutive morphisms such that $f_{i,i+k}=f_{i+k-1,i+k}\circ f_{i+k-2,i+k-1}\circ\ldots\circ f_{i+1,i+2}\circ f_{i,i+1}$, except possibly selfmaps $F_i\rightarrow F_i$. 
\end{definition}
\begin{remark}
Note that the definition does not require the decomposition of the map $f_{i,i+k}$ to be unique. 
\end{remark}

A common example of linear family is the family of cellular resolution consisting of powers of an ideal $I$. Families consisting of resolutions corresponding to the powers of some ideal can be defined to have multiplication maps that correspond to the monomials in the ideal, and so the composition part of the linear family is easily satisfied. Of course the freedom of choice allows one to choose the morphisms such that any power family can be non-linear, so even with families of powers there is need to clarify the morphisms and check linearity in all cases. 

\subsection{Example: powers of I=(x,y,z)}
\label{triangleex}
We begin with an explicit example of a family of cellular resolutions that has finitely generated syzygies where this can be shown with the help of representation stability.
The propositions presented in this example are special cases of theorems and propositions in the later sections. 

Let $S=k[x,yz]$ and let $I=(x,y,z)$ be an ideal. We are interested in studying the syzygies of the modules $S/I^n$ for positive integer $n$. 

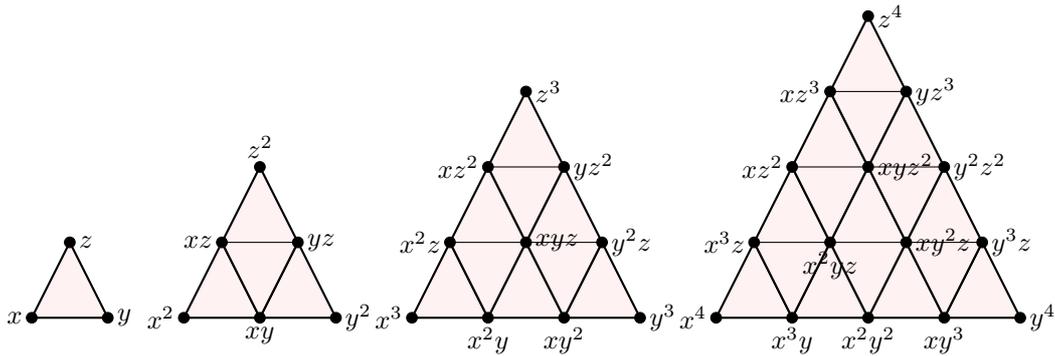
\begin{figure}
\begin{tikzpicture}
\filldraw[color=black, fill=red!5,  thick](-3,0) -- (-2.5,1) -- (-2,0) -- cycle;
\filldraw [black] (-3,0) circle (2pt) node[anchor=east] {$x$};
\filldraw [black] (-2,0) circle (2pt) node[anchor=west] {$y$};
\filldraw [black] (-2.5,1) circle (2pt) node[anchor=west] {$z$};

\filldraw[color=black, fill=red!5,  thick](-1,0) -- (0,0) -- (-0.5,1) -- cycle;
\filldraw[color=black, fill=red!5,  thick](0,0) -- (0.5,1) -- (1,0) -- cycle;
\filldraw[color=black, fill=red!5,  thick](0,0) -- (0.5,1) -- (0,2) --  (-0.5,1) -- cycle;
\draw (0.5,1) -- (-0.5,1);
\filldraw [black] (-1,0) circle (2pt) node[anchor=east] {$x^2$};
\filldraw [black] (1,0) circle (2pt) node[anchor=west] {$y^2$};
\filldraw [black] (0,0) circle (2pt) node[anchor=north] {$xy$};
\filldraw [black] (-0.5,1) circle (2pt) node[anchor=east] {$xz$};
\filldraw [black] (0.5,1) circle (2pt) node[anchor=west] {$yz$};
\filldraw [black] (0,2) circle (2pt) node[anchor=south] {$z^2$};

\filldraw[color=black, fill=red!5,  thick](2,0) -- (3,0) -- (2.5,1) -- cycle;
\filldraw[color=black, fill=red!5,  thick](3,0) -- (3.5,1) -- (4,0) -- cycle;
\filldraw[color=black, fill=red!5,  thick](3,0) -- (3.5,1) -- (3,2) --  (2.5,1) -- cycle;
\draw (3.5,1) -- (2.5,1);
\filldraw[color=black, fill=red!5,  thick](4,0) -- (4.5,1) -- (5,0) -- cycle;
\filldraw[color=black, fill=red!5,  thick](4,0) -- (4.5,1) -- (4,2) --  (3.5,1) -- cycle;
\filldraw[color=black, fill=red!5,  thick](3.5,1) -- (4,2) -- (3.5,3) --  (3,2) -- cycle;
\draw (3.5,1) -- (4.5,1);
\draw (4,2)--(3,2);

\filldraw [black] (2,0) circle (2pt) node[anchor=east] {$x^3$};
\filldraw [black] (4,0) circle (2pt) node[anchor=north] {$xy^2$};
\filldraw [black] (3,0) circle (2pt) node[anchor=north] {$x^2y$};
\filldraw [black] (2.5,1) circle (2pt) node[anchor=east] {$x^2z$};
\filldraw [black] (3.5,1) circle (2pt) node[anchor=west] {$xyz$};
\filldraw [black] (3,2) circle (2pt) node[anchor=east] {$xz^2$};
\filldraw [black] (3.5,3) circle (2pt) node[anchor=west] {$z^3$};
\filldraw [black] (4,2) circle (2pt) node[anchor=west] {$yz^2$};
\filldraw [black] (4.5,1) circle (2pt) node[anchor=west] {$y^2z$};
\filldraw [black] (5,0) circle (2pt) node[anchor=west] {$y^3$};

\filldraw[color=black, fill=red!5,  thick](6,0) -- (7,0) -- (6.5,1) -- cycle;
\filldraw[color=black, fill=red!5,  thick](7,0) -- (7.5,1) -- (8,0) -- cycle;
\filldraw[color=black, fill=red!5,  thick](7,0) -- (7.5,1) -- (7,2) --  (6.5,1) -- cycle;
\draw (7.5,1) -- (6.5,1);
\filldraw[color=black, fill=red!5,  thick](8,0) -- (8.5,1) -- (9,0) -- cycle;
\filldraw[color=black, fill=red!5,  thick](8,0) -- (8.5,1) -- (8,2) --  (7.5,1) -- cycle;
\filldraw[color=black, fill=red!5,  thick](7.5,1) -- (8,2) -- (7.5,3) --  (7,2) -- cycle;
\draw (7.5,1) -- (8.5,1);
\draw (8,2)--(7,2);
\filldraw[color=black, fill=red!5,  thick](9,0) -- (9.5,1) -- (10,0) -- cycle;
\filldraw[color=black, fill=red!5,  thick](9,0) -- (9.5,1) -- (9,2) --  (8.5,1) -- cycle;
\filldraw[color=black, fill=red!5,  thick](8.5,1) -- (9,2) -- (8.5,3) --  (8,2) -- cycle;
\draw (8.5,1) -- (9.5,1);
\draw (9,2)--(8,2);
\filldraw[color=black, fill=red!5,  thick](8,2) -- (8.5,3) -- (8,4) --  (7.5,3) -- cycle;
\draw (8.5,3)--(7.5,3);
\filldraw [black] (6,0) circle (2pt) node[anchor=east] {$x^4$};
\filldraw [black] (8,0) circle (2pt) node[anchor=north] {$x^2y^2$};
\filldraw [black] (7,0) circle (2pt) node[anchor=north] {$x^3y$};
\filldraw [black] (6.5,1) circle (2pt) node[anchor=east] {$x^3z$};
\filldraw [black] (7.5,1) circle (2pt) node[anchor=north] {$x^2yz$};
\filldraw [black] (7,2) circle (2pt) node[anchor=east] {$xz^2$};
\filldraw [black] (7.5,3) circle (2pt) node[anchor=east] {$xz^3$};
\filldraw [black] (8,2) circle (2pt) node[anchor=west] {$xyz^2$};
\filldraw [black] (8.5,1) circle (2pt) node[anchor=west] {$xy^2z$};
\filldraw [black] (9,0) circle (2pt) node[anchor=north] {$xy^3$};
\filldraw [black] (8,4) circle (2pt) node[anchor=west] {$z^4$};
\filldraw [black] (8.5,3) circle (2pt) node[anchor=west] {$yz^3$};
\filldraw [black] (9,2) circle (2pt) node[anchor=west] {$y^2z^2$};
\filldraw [black] (9.5,1) circle (2pt) node[anchor=west] {$y^3z$};
\filldraw [black] (10,0) circle (2pt) node[anchor=west] {$y^4$};

\end{tikzpicture}
\caption{The first four cell complexes in the family of cell complexes supporting the non-minimal resolutions.}
\label{nonminpic1}
\end{figure}

Let $X_n$ denote the labelled cell complex with labels from $I^n$. The cell complex $X_n$ is made of a triangle that is subdivided into  $n^2$ smaller triangles.
Let the triangle bounded by the vertices $x^n,y^n$ and $z^n$ be called the \emph{outer triangle.} We refer to the vertices of the cell complex $X_n$ by the labels. 
Consider the family of resolutions supported on the cell complexes $X_n$.  Figure \ref{nonminpic1} shows the first few labelled cell complexes in the family. This family consists of non-minimal cellular resolutions. We assume that the orientation is fixed.
Fixing the orientation of the cell complexes means that we only have one cell complex for each set of labels. 

Next we want to study the possible morphisms between the cellular resolutions supported on $X_n$. 
Recall that in \cite{me} the morphisms are defined as compatible pairs of cellular maps and chain maps. In the case of the family supported on the cell complexes in Figure \ref{nonminpic1} we can approach the morphism by studying the possible cellular maps. Since we are interested in the syzygies, we do not need the information of all different maps compatible with a chain map. Thus it suffices to find just one  cellular map for each compatible chain map.  In practice this means we only focus on which cells are mapped to which cells, and we do not care about how it is mapped as topological map.

Recall that in the induce label map $\varphi_g$ can be found in the Definition \ref{inducedlabelmap}.

\begin{proposition}
The possible cellular maps $g:X_n\rightarrow X_{n+1}$, such that they are a component of a cellular resolution morphism, are the ones inducing label maps by multiplication by a variable.
\end{proposition}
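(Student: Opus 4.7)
The plan is to combine the algebraic constraint imposed by a compatible chain map with the vertex combinatorics of the triangular subdivisions $X_n$ and $X_{n+1}$. Since $(\mathcal{F}_{X_n})_0 = (\mathcal{F}_{X_{n+1}})_0 = S$, the degree-zero component $f_0$ of any compatible chain map is a $1\times 1$ matrix over $S$, i.e., multiplication by some element $s \in S$. The compatibility condition $f_0(m) = \varphi_g(m)$ on the label ideal then forces $\varphi_g$ to act as ``multiplication by $s$'' on every label. Since labels are monomials and $S = k[x,y,z]$ is a domain, $s$ itself must be a monomial.

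Next I would bring in the cellular nature of $g$: for any vertex $v$ of $X_n$ the image $g(v)$ lies in the $0$-skeleton of $X_{n+1}$, so $g(v)$ is a single vertex $w$ of $X_{n+1}$, and $\varphi_g(m_v) = m_w$. The vertex labels of $X_n$ are monomials of degree $n$ and those of $X_{n+1}$ are monomials of degree $n+1$, so the equation $s\cdot m_v = m_w$ forces $\deg s = 1$, i.e., $s \in \{x,y,z\}$.

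For the converse direction, every variable $s$ is actually realized by a genuine cellular morphism: the label assignment $m \mapsto s\cdot m$ identifies $X_n$ with a sub-triangle of $X_{n+1}$ as labelled cell complexes, and by the preceding proposition that embeddings of labelled cell complexes give cellular morphisms, this extends to a morphism in ${\bf CellRes}$ whose cellular component induces multiplication by $s$.

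The main subtlety to keep in mind is the paper's generalized definition of cellular maps, which allows a face to map to a union of faces; however this ambiguity is irrelevant on $0$-cells, since a single point still maps to a single point, so the proof reduces to the clean degree count on vertex labels described above.
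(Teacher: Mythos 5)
Your proof is correct, but it runs along a genuinely different line from the paper's. The paper argues by cases: it first rules out cellular maps that drop dimension, then exhibits the three multiplication maps and verifies compatibility, asserts that for $n\geq 2$ nothing else is available, and finally treats the $X_1\rightarrow X_2$ map onto the central triangle separately by checking that its three candidate label maps admit no compatible chain map. You instead extract everything from the structural constraint that $f_0$ is a $1\times 1$ matrix: compatibility forces $\varphi_g$ to be global multiplication by a single $s\in S$, and since cellular maps send the $0$-skeleton to the $0$-skeleton, comparing the degrees $n$ and $n+1$ of vertex labels pins down $\deg s=1$. This is cleaner and uniform in $n$; in particular it disposes of the central-triangle candidates automatically, since those label maps (e.g.\ $x\mapsto xy$, $y\mapsto yz$, $z\mapsto xz$) are not multiplication by any fixed monomial, whereas the paper has to check them by hand. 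One small caveat on your converse direction: the proposition you cite about embeddings giving cellular morphisms is proved under the assumption that the embedding respects labels, so that $\varphi_f$ is the identity; your embedding $m\mapsto s\cdot m$ shifts labels, so that proposition does not literally apply. The fix is immediate --- take $f_0$ to be multiplication by $s$ and the higher $f_i$ to be the $\pm 1$ matrices dictated by the embedding, which commute with the differentials because the label differences $\mathbf{a}_F-\mathbf{a}_G$ are unchanged by multiplying every label by $s$ --- and this is exactly the ``simple computation'' the paper itself gestures at, so the gap is cosmetic rather than substantive.
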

\begin{proof}
By definition of compatibility the cellular map $g$ must induce a map $\varphi_g$ between the label ideals. 
If we have a cellular map where any cell maps into a lower-dimensional cell, the induced label map would not give a map between ideals. 
Thus the cellular maps have to map cells to the same dimensional cells.

 We have three easily found maps for the maps taking cells to same dimensional cells.  The cellular map $g$ taking vertex $m$ to the vertex $mx$ defines a label map $\varphi_g$ that is multiplication by $x$. A simple computation shows that the cellular map $g$ is compatible with the chain map ${\bf f}$ between the resolutions supported on $X_n$ and $X_{n+1}$ such that $f_0=\varphi_g$.  This cellular map $g$ gives a morphism of cellular resolutions. 
Swapping the variable $x$ to either $y$ or $z$ gives a similar map, the multiplication is just by a different variable and we still get a morphism of cellular resolutions. 

In the case $n\geq 2$, we do not have other maps that do not involve permutations of variables. 
If we are in the case $X_1$ mapping to $X_2$ there is the cellular map taking $X_1$ to the central triangle of $X_2$. 
This gives us three possible label maps on the vertices which are 
$$\begin{array}{ccclccclccc}
x&\mapsto& xy& &x&\mapsto& yz& &x&\mapsto& xz\\
y&\mapsto& yz&, &y&\mapsto& xz& ,\ \mathrm{and}&y&\mapsto& xy\\
z&\mapsto& xz& &z&\mapsto& yx& &z&\mapsto& yz
\end{array}.$$
One can then use the definition of compatibility to see that we cannot construct a compatible chain map for any of the three maps above.

So we get that the only cellular maps compatible with the cellular resolution morphisms are the multiplications by a variable.
\end{proof}
Let $t_1$, $t_2$ and $t_3$ be the maps corresponding  to the multiplications by $x$, $y$ and $z$, respectively. 

We note that the possible cellular maps are in one to one correspondence to the morphisms of the minimal family. Figure \ref{minIpic1} shows the corresponding cell complexes of the minimal family to the cell complexes of the non-minimal family of Figure \ref{nonminpic1}.

Next we show that for $n\geq 2$ the cellular maps $t_1$, $t_2$ and $t_3$ are "surjective" together, that is their image cover the cell complex they map to. The condition on $n$ is easily seen by considering the maps from $X_1$ to $X_2$ where no map maps to the central triangle of $X_2$. 

\begin{proposition}
\label{fgtriangles}
For $n\geq 2$, any cell in $X_{n+1}$ is in the image of $t_i(X_n)$, $i\in\{1,2,3\}$, for at least one $i$. 
\end{proposition}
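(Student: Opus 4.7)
The plan is to parameterize everything by the exponent vectors and then check each cell type by a pigeonhole argument on the coordinates.

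First I would fix coordinates: identify each vertex of $X_n$ with a triple $(a,b,c)\in\mathbb{N}^3$ such that $a+b+c=n$ (the exponent vector of its label $x^ay^bz^c$). The triangulation of $X_n$ then decomposes into two classes of small triangles, the ``upward'' triangles with vertex set $\{(a+1,b,c),(a,b+1,c),(a,b,c+1)\}$ where $a+b+c=n-1$, and the ``downward'' triangles with vertex set $\{(a,b+1,c+1),(a+1,b,c+1),(a+1,b+1,c)\}$ where $a+b+c=n-2$; the edges are pairs of vertices differing by one of the three unit swaps. In these coordinates the maps $t_1,t_2,t_3$ act by $(a,b,c)\mapsto(a+1,b,c)$, $(a,b,c)\mapsto(a,b+1,c)$, $(a,b,c)\mapsto(a,b,c+1)$ respectively, so a cell of $X_{n+1}$ lies in $t_i(X_n)$ precisely when every vertex of the cell has a positive $i$-th coordinate (so that subtracting one unit from that coordinate yields an honest vertex of $X_n$).

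Next I would run through the three types of cells in $X_{n+1}$. For a vertex $(a,b,c)$ with $a+b+c=n+1$ at least one coordinate is positive, so it lies in some $t_i(X_n)$. For an edge, say between $(a,b,c)$ and $(a{-}1,b{+}1,c)$, the cell is covered by $t_1$ if $a\geq 2$, by $t_2$ if $b\geq 1$, and by $t_3$ if $c\geq 1$; the negation of all three conditions forces $a+b+c\leq 1$, contradicting $a+b+c=n+1\geq 3$. The analogous check handles the other two edge orientations. For an upward triangle, positivity of $a$, $b$, or $c$ respectively suffices, and in $X_{n+1}$ an upward triangle has $a+b+c=n\geq 2$, so at least one coordinate is positive. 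For a downward triangle, one checks that positivity of $a$, $b$, or $c$ again suffices, and in $X_{n+1}$ a downward triangle satisfies $a+b+c=n-1\geq 1$ (using $n\geq 2$), so the same argument applies.

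The only mild obstacle is the bookkeeping for downward triangles: their parameter satisfies $a+b+c=n-1$, so the lower bound $n\geq 2$ is used sharply here. Indeed for $n=1$ the central downward triangle of $X_2$, namely $\{(0,1,1),(1,0,1),(1,1,0)\}=\{yz,xz,xy\}$, is precisely the cell that fails to be covered by any $t_i(X_1)$, which both explains the hypothesis $n\geq 2$ and shows the result is sharp.
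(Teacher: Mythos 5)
Your proof is correct, but it takes a genuinely different route from the paper's. The paper argues geometrically: it observes that $t_1$ covers the subtriangle of $X_{n+1}$ spanned by the labels divisible by $x$, leaving a strip of small triangles along the opposite edge uncovered; it then checks, with reference to a figure, that $t_2$ covers all of that strip except the top two cells, which $t_3$ finally covers. You instead pass to exponent-vector coordinates, characterise membership in $t_i(X_n)$ by positivity of the $i$-th coordinate of every vertex of a cell, and dispose of each cell type (vertex, edge, upward triangle, downward triangle) by a pigeonhole count on $a+b+c$. The two arguments buy different things: the paper's version is short and visually transparent but leans on the figure and never isolates where the hypothesis $n\geq 2$ enters; your version is figure-free, makes the covering criterion explicit cell by cell, and locates $n\geq 2$ precisely in the downward triangles (whose parameter sums to $n-1$), which also recovers the paper's separate remark that the central triangle of $X_2$ is the unique obstruction at $n=1$. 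One small point you should state explicitly is that $t_i(X_n)$ is the \emph{full} subcomplex of $X_{n+1}$ on the vertices with positive $i$-th coordinate, i.e.\ that the unit shift carries the triangulation of $X_n$ isomorphically onto that subcomplex; your covering criterion silently uses this, and it is the coordinate-free content of the paper's assertion that the $t_i$ are embeddings onto subtriangles.
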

\begin{proof}
Let us consider the three maps between $X_n$ and $X_{n+1}$. 
The maps $t_i$, $i=1,2,3$, are embeddings of $X_n$ into $X_{n+1}$. One can think of these maps as covering a part of $X_{n+1}$ by $X_n$. If three copies of $X_n$ based on $t_i$ cover the whole $X_{n+1}$ we have the desired result. 

The map $t_1$ maps $X_n$ to the subtriangle of $X_{n+1}$ bounded by the vertices with labels containing the variable $x$. It leaves a strip of triangles uncovered. This strip is shown in Figure \ref{trianglestrip}. 
Next consider the map $t_2$. We only need to investigate if it covers any of the cells in the strip, as the other cells are already in the image of $t_1$. The map $t_2$ maps to all labels containing $y$, so in particular it covers all of the triangle strip but the top two ones. This is shown in Figure \ref{trianglestrip}.
Finally we see that the map $t_3$ will cover the remaining cells since it will map to all cells with label $z$ in them, in particular the two top ones.
\end{proof}
\begin{figure}
\begin{tikzpicture}
\filldraw[color=white, fill=blue!5,  thick](1,0) -- (0,2) -- (1,4) -- (2,4) --(3.5,1)--(3,0) -- cycle;
\filldraw[color=red!5, fill=red!5,  thick](4,2)-- (3,4) -- (2,4) --(3.5,1) -- cycle;

\filldraw[color=black, fill=blue!5,  thick](-1,0) -- (0,0) -- (-0.5,1) -- cycle;
\filldraw[color=black, fill=blue!5,  thick](0,0) -- (0.5,1) -- (1,0) -- cycle;
\filldraw[color=black, fill=blue!5,  thick](0,0) -- (0.5,1) -- (0,2) --  (-0.5,1) -- cycle;
\draw (0.5,1) -- (-0.5,1);
\filldraw [black] (-1,0) circle (2pt) node[anchor=east] {$x^{n+1}$};
\filldraw [black] (1,0) circle (2pt) node[anchor=north] {$x^{n-1}y^2$};
\filldraw [black] (0,0) circle (2pt) node[anchor=north] {$x^ny$};
\filldraw [black] (-0.5,1) circle (2pt) node[anchor=east] {$x^nz$};
\filldraw [black] (0,2) circle (2pt) node[anchor=east] {$x^{n-1}z^2$};  
\draw(0,2) --(0.25,2.5);
\draw(0.5,1) --(0.75,1.5);
\draw(1,0) --(1.25,0.5);
\draw(0,2) --(0.5,2);
\draw(0.5,1) --(1,1);
\draw(1,0) --(1.5,0);

\filldraw[color=black, fill=blue!5,  thick](3,0) -- (4,0) -- (3.5,1) -- cycle;
\filldraw[color=black, fill=red!5,  thick](4,0) -- (4.5,1) -- (5,0) -- cycle;
\filldraw[color=black, fill=red!5,  thick](4,0) -- (4.5,1) -- (4,2) --  (3.5,1) -- cycle;
\draw (4.5,1) -- (3.5,1);
\filldraw [black] (3,0) circle (2pt) node[anchor=north] {$y^{n-1}x^2$};
\filldraw [black] (5,0) circle (2pt) node[anchor=west] {$y^{n+1}$};
\filldraw [black] (4,0) circle (2pt) node[anchor=north] {$xy^n$};
\filldraw [black] (4.5,1) circle (2pt) node[anchor=west] {$y^nz$};
\filldraw [black] (4,2) circle (2pt) node[anchor=west] {$y^{n-1}z^2$};  
\draw(3,0)--(2.5,0);
\draw (3.5,1)--(3,1);
\draw (4,2)--(3.5,2);
\draw(3,0)--(2.75,0.5);
\draw (3.5,1)--(3.25,1.5);
\draw (4,2)--(3.75,2.5);

\filldraw[color=black, fill=blue!5,  thick](1,4) -- (2,4) -- (1.5,5) -- cycle;
\filldraw[color=black, fill=red!5,  thick](2,4) -- (2.5,5) -- (3,4) -- cycle;
\filldraw[color=black, fill=red!5,  thick](2,4) -- (2.5,5) -- (2,6) --  (1.5,5) -- cycle;
\draw (2.5,5) -- (1.5,5);
\filldraw [black] (1,4) circle (2pt) node[anchor=east] {$x^2z^{n-1}$};
\filldraw [black] (3,4) circle (2pt) node[anchor=west] {$y^2z^{n-1}$};
\filldraw [black] (1.5,5) circle (2pt) node[anchor=east] {$xz^n$};
\filldraw [black] (2.5,5) circle (2pt) node[anchor=west] {$yz^n$};
\filldraw [black] (2,6) circle (2pt) node[anchor=south] {$z^{n+1}$}; 
\draw(1,4)--(0.75,3.5);
\draw(2,4)--(1.75,3.5);
\draw(3,4)--(2.75,3.5); 
\draw(1,4)--(1.25,3.5);
\draw(2,4)--(2.25,3.5);
\draw(3,4)--(3.25,3.5); 

\draw[dotted](0.25,2.5)--(0.75,3.5);
\draw[dotted](0.75,1.5)--(1.75,3.5);
\draw[dotted](1.25,0.5)--(2.75,3.5);
\draw[dotted](0.5,2)--(3.5,2);
\draw[dotted](1,1)--(3,1);
\draw[dotted](1.5,0)--(2.5,0);
\draw[dotted](2.75,0.5)--(1.25,3.5);
\draw[dotted](3.25,1.5)--(2.25,3.5);
\draw[dotted](3.75,2.5)--(3.25,3.5); 

\filldraw[color=white, fill=blue!5,  thick](10,0) -- (9,2) -- (10,4) -- (11,4) --(12.5,1)--(12,0) -- cycle;
\filldraw[color=blue!5, fill=blue!5,  thick](13,2)-- (12,4) -- (11,4) --(12.5,1) -- cycle;
\filldraw[color=blue!5, fill=blue!10,  thick](11,4)--(12.5,1)--(12,0)--(10,0)--(9.5,1)--cycle;

\filldraw[color=black, fill=blue!5,  thick](8,0) -- (9,0) -- (8.5,1) -- cycle;
\filldraw[color=black, fill=blue!10,  thick](9,0) -- (9.5,1) -- (10,0) -- cycle;
\filldraw[color=black, fill=blue!5,  thick](9,0) -- (9.5,1) -- (9,2) --  (8.5,1) -- cycle;
\draw (9.5,1) -- (8.5,1);
\filldraw [black] (8,0) circle (2pt) node[anchor=east] {$x^{n+1}$};
\filldraw [black] (10,0) circle (2pt) node[anchor=north] {$x^{n-1}y^2$};
\filldraw [black] (9,0) circle (2pt) node[anchor=north] {$x^ny$};
\filldraw [black] (8.5,1) circle (2pt) node[anchor=east] {$x^nz$};
\filldraw [black] (9,2) circle (2pt) node[anchor=east] {$x^{n-1}z^2$};  
\draw(9,2) --(9.25,2.5);
\draw(9.5,1) --(9.75,1.5);
\draw(10,0) --(10.25,0.5);
\draw(9,2) --(9.5,2);
\draw(9.5,1) --(10,1);
\draw(10,0) --(10.5,0);

\filldraw[color=black, fill=blue!10,  thick](12,0) -- (13,0) -- (12.5,1) -- cycle;
\filldraw[color=black, fill=blue!5,  thick](13,0) -- (13.5,1) -- (14,0) -- cycle;
\filldraw[color=black, fill=blue!5,  thick](13,0) -- (13.5,1) -- (13,2) --  (12.5,1) -- cycle;
\draw (13.5,1) -- (12.5,1);
\filldraw [black] (12,0) circle (2pt) node[anchor=north] {$y^{n-1}x^2$};
\filldraw [black] (14,0) circle (2pt) node[anchor=west] {$y^{n+1}$};
\filldraw [black] (13,0) circle (2pt) node[anchor=north] {$xy^n$};
\filldraw [black] (13.5,1) circle (2pt) node[anchor=west] {$y^nz$};
\filldraw [black] (13,2) circle (2pt) node[anchor=west] {$y^{n-1}z^2$};  
\draw(12,0)--(11.5,0);
\draw (12.5,1)--(12,1);
\draw (13,2)--(12.5,2);
\draw(12,0)--(11.75,0.5);
\draw (12.5,1)--(12.25,1.5);
\draw (13,2)--(12.75,2.5);

\filldraw[color=black, fill=blue!5,  thick](10,4) -- (11,4) -- (10.5,5) -- cycle;
\filldraw[color=black, fill=blue!5,  thick](11,4) -- (11.5,5) -- (12,4) -- cycle;
\filldraw[color=black, fill=red!5,  thick](11,4) -- (11.5,5) -- (11,6) --  (10.5,5) -- cycle;
\draw (11.5,5) -- (10.5,5);
\filldraw [black] (10,4) circle (2pt) node[anchor=east] {$x^2z^{n-1}$};
\filldraw [black] (12,4) circle (2pt) node[anchor=west] {$y^2z^{n-1}$};
\filldraw [black] (10.5,5) circle (2pt) node[anchor=east] {$xz^n$};
\filldraw [black] (11.5,5) circle (2pt) node[anchor=west] {$yz^n$};
\filldraw [black] (11,6) circle (2pt) node[anchor=south] {$z^{n+1}$}; 
\draw(10,4)--(9.75,3.5);
\draw(11,4)--(10.75,3.5);
\draw(12,4)--(11.75,3.5); 
\draw(10,4)--(10.25,3.5);
\draw(11,4)--(11.25,3.5);
\draw(12,4)--(12.25,3.5); 

\draw[dotted](9.25,2.5)--(9.75,3.5);
\draw[dotted](9.75,1.5)--(10.75,3.5);
\draw[dotted](10.25,0.5)--(11.75,3.5);
\draw[dotted](9.5,2)--(12.5,2);
\draw[dotted](10,1)--(12,1);
\draw[dotted](10.5,0)--(11.5,0);
\draw[dotted](11.75,0.5)--(10.25,3.5);
\draw[dotted](12.25,1.5)--(11.25,3.5);
\draw[dotted](12.75,2.5)--(12.25,3.5); 
\end{tikzpicture}
\caption{(a) The image of $t_1$ is in blue and the uncover triangle strip is in light red in $X_{n+1}$. (b) The images of $t_1$ and $t_2$ in blue and uncovered top triangles.}
\label{trianglestrip}
\end{figure}
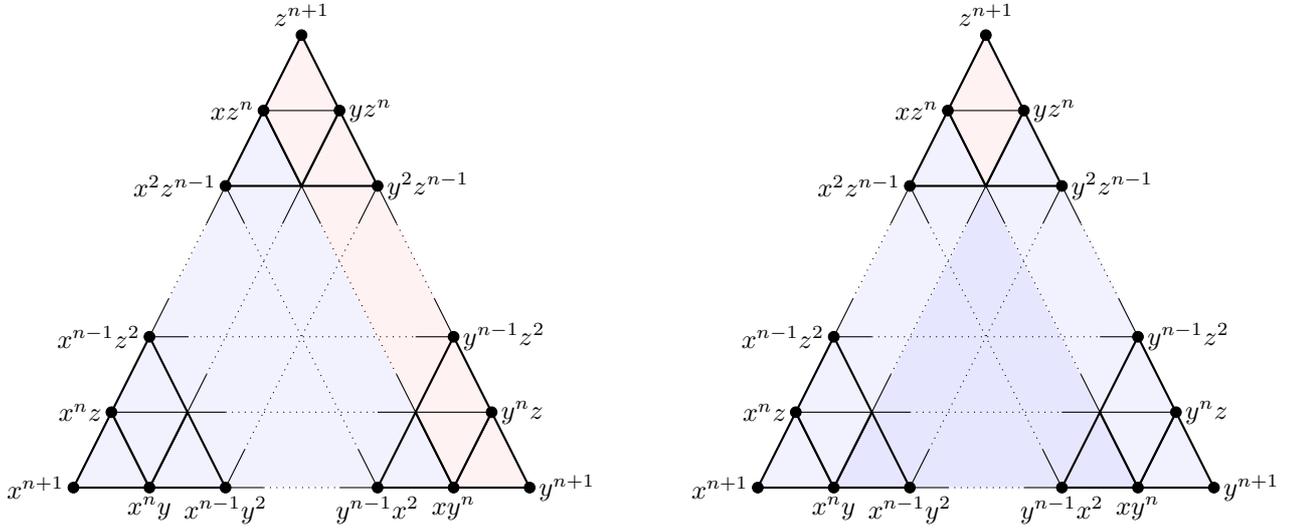
The overlap of any two of the maps is $X_{n-1}$ with labels multiplied with $xy$, $xz$ or $yz$ with respect to the maps. The overlap of all three is $X_{n-2}$ with labels multiplied by $xyz$. Here we consider $X_0$ to be a single point with label 1.

\begin{figure}
\begin{tikzpicture}
\filldraw[color=black, fill=red!5,  thick](-3,0) -- (-2.5,1) -- (-2,0) -- cycle;
\filldraw [black] (-3,0) circle (2pt) node[anchor=east] {$x$};
\filldraw [black] (-2,0) circle (2pt) node[anchor=west] {$y$};
\filldraw [black] (-2.5,1) circle (2pt) node[anchor=west] {$z$};

\filldraw[color=black, fill=red!5,  thick](-1,0) -- (0,0) -- (-0.5,1) -- cycle;
\filldraw[color=black, fill=red!5,  thick](0,0) -- (0.5,1) -- (1,0) -- cycle;
\filldraw[color=black, fill=red!5,  thick](0,0) -- (0.5,1) -- (0,2) --  (-0.5,1) -- cycle;
\filldraw [black] (-1,0) circle (2pt) node[anchor=east] {$x^2$};
\filldraw [black] (1,0) circle (2pt) node[anchor=west] {$y^2$};
\filldraw [black] (0,0) circle (2pt) node[anchor=north] {$xy$};
\filldraw [black] (-0.5,1) circle (2pt) node[anchor=east] {$xz$};
\filldraw [black] (0.5,1) circle (2pt) node[anchor=west] {$yz$};
\filldraw [black] (0,2) circle (2pt) node[anchor=south] {$z^2$};

\filldraw[color=black, fill=red!5,  thick](2,0) -- (3,0) -- (2.5,1) -- cycle;
\filldraw[color=black, fill=red!5,  thick](3,0) -- (3.5,1) -- (4,0) -- cycle;
\filldraw[color=black, fill=red!5,  thick](3,0) -- (3.5,1) -- (3,2) --  (2.5,1) -- cycle;
\filldraw[color=black, fill=red!5,  thick](4,0) -- (4.5,1) -- (5,0) -- cycle;
\filldraw[color=black, fill=red!5,  thick](4,0) -- (4.5,1) -- (4,2) --  (3.5,1) -- cycle;
\filldraw[color=black, fill=red!5,  thick](3.5,1) -- (4,2) -- (3.5,3) --  (3,2) -- cycle;
\filldraw [black] (2,0) circle (2pt) node[anchor=east] {$x^3$};
\filldraw [black] (4,0) circle (2pt) node[anchor=north] {$xy^2$};
\filldraw [black] (3,0) circle (2pt) node[anchor=north] {$x^2y$};
\filldraw [black] (2.5,1) circle (2pt) node[anchor=east] {$x^2z$};
\filldraw [black] (3.5,1) circle (2pt) node[anchor=west] {$xyz$};
\filldraw [black] (3,2) circle (2pt) node[anchor=east] {$xz^2$};
\filldraw [black] (3.5,3) circle (2pt) node[anchor=west] {$z^3$};
\filldraw [black] (4,2) circle (2pt) node[anchor=west] {$yz^2$};
\filldraw [black] (4.5,1) circle (2pt) node[anchor=west] {$y^2z$};
\filldraw [black] (5,0) circle (2pt) node[anchor=west] {$y^3$};

\filldraw[color=black, fill=red!5,  thick](6,0) -- (7,0) -- (6.5,1) -- cycle;
\filldraw[color=black, fill=red!5,  thick](7,0) -- (7.5,1) -- (8,0) -- cycle;
\filldraw[color=black, fill=red!5,  thick](7,0) -- (7.5,1) -- (7,2) --  (6.5,1) -- cycle;
\filldraw[color=black, fill=red!5,  thick](8,0) -- (8.5,1) -- (9,0) -- cycle;
\filldraw[color=black, fill=red!5,  thick](8,0) -- (8.5,1) -- (8,2) --  (7.5,1) -- cycle;
\filldraw[color=black, fill=red!5,  thick](7.5,1) -- (8,2) -- (7.5,3) --  (7,2) -- cycle;
\filldraw[color=black, fill=red!5,  thick](9,0) -- (9.5,1) -- (10,0) -- cycle;
\filldraw[color=black, fill=red!5,  thick](9,0) -- (9.5,1) -- (9,2) --  (8.5,1) -- cycle;
\filldraw[color=black, fill=red!5,  thick](8.5,1) -- (9,2) -- (8.5,3) --  (8,2) -- cycle;
\filldraw[color=black, fill=red!5,  thick](8,2) -- (8.5,3) -- (8,4) --  (7.5,3) -- cycle;
\filldraw [black] (6,0) circle (2pt) node[anchor=east] {$x^4$};
\filldraw [black] (8,0) circle (2pt) node[anchor=north] {$x^2y^2$};
\filldraw [black] (7,0) circle (2pt) node[anchor=north] {$x^3y$};
\filldraw [black] (6.5,1) circle (2pt) node[anchor=east] {$x^3z$};
\filldraw [black] (7.5,1) circle (2pt) node[anchor=north] {$x^2yz$};
\filldraw [black] (7,2) circle (2pt) node[anchor=east] {$xz^2$};
\filldraw [black] (7.5,3) circle (2pt) node[anchor=east] {$xz^3$};
\filldraw [black] (8,2) circle (2pt) node[anchor=west] {$xyz^2$};
\filldraw [black] (8.5,1) circle (2pt) node[anchor=west] {$xy^2z$};
\filldraw [black] (9,0) circle (2pt) node[anchor=north] {$xy^3$};
\filldraw [black] (8,4) circle (2pt) node[anchor=west] {$z^4$};
\filldraw [black] (8.5,3) circle (2pt) node[anchor=west] {$yz^3$};
\filldraw [black] (9,2) circle (2pt) node[anchor=west] {$y^2z^2$};
\filldraw [black] (9.5,1) circle (2pt) node[anchor=west] {$y^3z$};
\filldraw [black] (10,0) circle (2pt) node[anchor=west] {$y^4$};

\end{tikzpicture}
\caption{A minimal family of cell complexes supporting the resolution of powers of $I$.}
\label{minIpic1}
\end{figure}
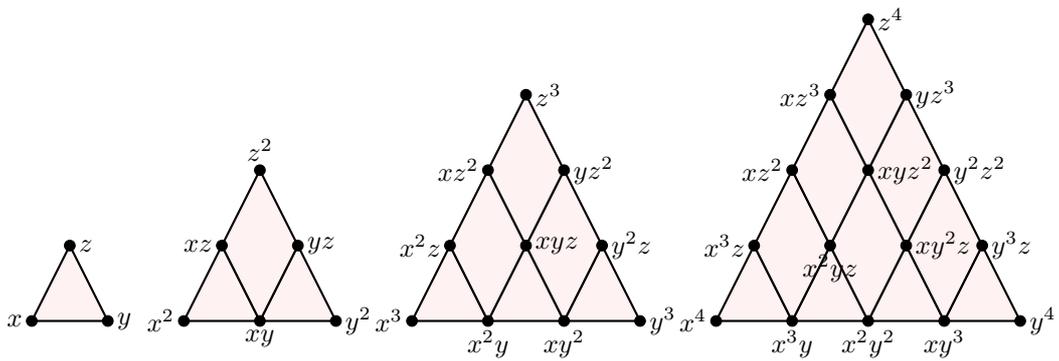
Let $\F$ be the family given by the cellular resolutions $S/I^n$ supported on the subdivided triangles. We also have the family $\overline{\F}$ that consists of the minimal resolutions of $S/I^n$, which is also a subcategory of {\bf CellRes}. The morphisms are pairs of  compatible maps, such that we only have one pair for each chain map. The first few cell complexes of the resolutions are shown in Figure \ref{minIpic1}.

We want to show that the category of representations is noetherian. One way for this is to study the principal projectives.
In $\F$ the Hom sets are finite, so the principal projective $P_x$ gives a finitely generated free module for any element. Moreover since any map $F_n\rightarrow F_m$ comes from the composition of maps of the form $F_i\rightarrow F_{i+1}$ for $n\leq i\leq m-1$, we get that the principal projectives are finitely generated by definition. 

\begin{proposition}
\label{ppnoeth}
Every principal projective representation $P_x$ of $\overline{\F}$ is noetherian.
\end{proposition}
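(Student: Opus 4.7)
The plan is to identify the principal projective $P_x$ with a familiar polynomial-ring object, and then transfer noetherianity from the polynomial ring.

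First I would pin down the Hom sets of $\overline{\F}$. Fix $x = F_n$. By the preceding proposition, the only cellular maps $F_m \to F_{m+1}$ participating in a cellular resolution morphism are $t_1, t_2, t_3$, and the family is linear, so every morphism $F_n \to F_m$ is a composition of such. Moreover the convention stated just before the proposition keeps exactly one compatible pair per chain map; since a chain map is determined by its underlying label map $\varphi_g$, and $\varphi_g$ is multiplication by a monomial in $k[x,y,z]$, the set $\operatorname{Hom}_{\overline{\F}}(F_n, F_m)$ is in bijection with the monomials of degree $m-n$ in three variables (and is empty for $m<n$, while for $m=n$ it consists only of the identity).

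Next I would set $A = R[u_1,u_2,u_3]$ and assemble the functor $P_x$ into a single graded $A$-module. Under the bijection above, the $R$-basis $\{e_f : f \in \operatorname{Hom}(F_n, F_m)\}$ of $P_x(F_m)$ is identified with the monomials of degree $m-n$ in $u_1, u_2, u_3$, and the structure map $P_x(g): P_x(F_m) \to P_x(F_{m'})$ becomes multiplication by the monomial associated to $g$. Thus
\[
\bigoplus_{m \geq n} P_x(F_m) \;\cong\; A(-n)
\]
as graded $A$-modules. Under this identification a subrepresentation $M \subseteq P_x$ is precisely a graded $R$-submodule stable under multiplication by $u_1, u_2, u_3$, i.e.\ a graded $A$-submodule of $A(-n)$, which is just a graded ideal of $A$ after the shift. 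A finite generating set for $M$ as an ideal is also a finite generating set for $M$ as a subrepresentation in the sense of Definition~\ref{fgrep}, because the subfunctor generated by a set $S$ is precisely the $R$-span of all $P_x(g)$-images of elements of $S$, which matches the ideal generated by $S$.

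Finally I would invoke Hilbert's basis theorem: since $R$ is noetherian, so is $A = R[u_1,u_2,u_3]$, and every graded ideal of $A$ is finitely generated. Translating back, every subrepresentation of $P_x$ is finitely generated, so $P_x$ is noetherian. The main subtlety is justifying the bijection in the first step: one must check that distinct monomials really do give distinct morphisms (immediate from the definition of compatibility once we keep one pair per chain map), and that there are no exotic selfmaps $F_n \to F_n$ beyond the identity (the only label map that is multiplication by a degree-zero monomial is the identity). Everything else is a formality once $P_x$ has been identified with $A(-n)$.
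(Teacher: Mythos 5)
Your proposal is correct, but it takes a genuinely different route from the paper. The paper's own proof is very short: it observes that the ascending chains of subobjects in $\overline{\F}$ are the whole category or subsets of it, and then asserts that since $P_x$ is finitely generated (because every morphism factors through the three consecutive multiplication maps) the ascending chains of subrepresentations stabilise; this is the same style of argument that is later spelled out for general linear families in Proposition~\ref{noethrep}. You instead make the structure of the Hom sets explicit: identifying $\operatorname{Hom}(F_n,F_m)$ with the degree-$(m-n)$ monomials in three variables turns $\bigoplus_m P_x(F_m)$ into the shifted graded module $A(-n)$ over $A=R[u_1,u_2,u_3]$, subrepresentations become graded $A$-submodules, and Hilbert's basis theorem finishes the job. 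What your approach buys is a complete argument for the ascending chain condition --- finite generation of $P_x$ alone does not imply noetherianity, and your reduction to $A(-n)$ is exactly the ingredient that closes that gap for this family. What it costs is generality: it leans on the fact that the morphism monoid here is the free \emph{commutative} monoid on $t_1,t_2,t_3$ (compositions in different orders give the same label map, hence the same morphism under the one-morphism-per-chain-map convention), whereas the paper's factor-through-consecutive-maps argument is the template it reuses for arbitrary linear families where no such polynomial-ring model is available. The one point you should make fully explicit is the identification of $\operatorname{Hom}(F_n,F_m)$ with monomials: you need not only that distinct monomials give distinct morphisms but also that two compositions of the $t_i$ realising the same monomial are literally the same morphism of $\overline{\F}$, which holds here because they induce the same cellular map and the family keeps a single compatible pair per chain map.
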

\begin{proof}
By definition a representation is noetherian if every ascending chain of subobjects stabilises. In $\overline{\F}$ all the ascending chains are the whole category or some subset of it.  Since the principal projective is finitely generated the same arguments show the ascending chains stabilise in the representation stability sense.
\end{proof}

\begin{corollary}
$\textrm{Rep}_S(\overline{\F})$ is noetherian.
\end{corollary}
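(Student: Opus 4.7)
The plan is to deduce this as an immediate consequence of the preceding Proposition \ref{ppnoeth} together with the general criterion recalled from Sam--Snowden. Specifically, Proposition \ref{noeth} states that $\textrm{Rep}_R(\C)$ is noetherian if and only if every principal projective $P_x$ is noetherian. Applied to the category $\overline{\F}$ with coefficient ring $S$, this reduces the corollary to showing noetherianity of each principal projective.

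First, I would verify that Proposition \ref{noeth} applies in the present setting: the ring $S = k[x,y,z]$ is a commutative noetherian ring, and $\overline{\F}$, being an essentially small subcategory of \textbf{CellRes} with countably many objects and finite Hom sets between consecutive cellular resolutions (hence finite Hom sets overall by the linear family structure), satisfies the essentially small hypothesis of Section \ref{repstab}.

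Next, I would invoke Proposition \ref{ppnoeth} directly: every principal projective $P_x$ of $\overline{\F}$ is noetherian. The corollary then follows from the ``if'' direction of Proposition \ref{noeth}.

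There is essentially no obstacle here, since both inputs have already been established. The only thing worth spelling out in the write-up is the identification of the ring $R$ in Proposition \ref{noeth} with our polynomial ring $S$, and the observation that $\overline{\F}$ is essentially small so that the representation-theoretic framework of Section \ref{repstab} is applicable. Once these are noted, the conclusion is immediate.
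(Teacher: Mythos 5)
Your proposal is correct and follows exactly the paper's own argument: apply Proposition \ref{ppnoeth} to get noetherianity of every principal projective, then conclude via the Sam--Snowden criterion in Proposition \ref{noeth}. The extra remarks on verifying that $\overline{\F}$ is essentially small and that $S$ is noetherian are harmless elaborations of the same two-step proof.
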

\begin{proof}
 By Proposition \ref{ppnoeth} every principal projective is noetherian. Then applying Proposition \ref{noeth} to $\overline{\F}$ gives the noetherianity of the category.
\end{proof}

Let us consider the following representation of $\overline{\F}$. We define a functor
$$s_p:\overline{\F}\rightarrow \smod$$
taking a cellular resolution of $I^n$ to the $p^{th}$ module of the non-minimal resolution of $I^n$. The functor maps a morphism given by a multiplication of a monomial to the $p^{th}$ component of chain map between the non-minimal resolutions which is the multiplication by the same monomial between the non-minimal resolutions. 
\begin{proposition}
The representation $s_p$ of $\overline{\F}$ defined above is finitely generated.
\end{proposition}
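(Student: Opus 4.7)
The plan is to build an explicit finite generating set using the covering property of Proposition \ref{fgtriangles}. Each module $s_p(F_n)=(\mathcal{F}_{X_n})_p$ is a finite free $S$-module with the distinguished basis $\{e_F\}$ indexed by the $(p-1)$-faces of $X_n$, so it is natural to take
\[ G = \{e_F : F \text{ is a } (p-1)\text{-face of } X_1 \text{ or } X_2\}, \]
which is finite. I plan to show that the subrepresentation $\langle G\rangle$ generated by $G$ coincides with $s_p$.

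Before carrying out the induction that propagates $G$ across levels, I would verify that the functor $s_p$ behaves transparently on basis elements: for each of the three morphisms $t_i:F_n\to F_{n+1}$ corresponding to multiplication by $x$, $y$, or $z$, the chain map $s_p(t_i)$ sends $e_F$ to $\pm e_{t_i(F)}$. Compatibility forces $s_p(t_i)(e_F)$ to be an $S$-multiple of $e_{t_i(F)}$, and the chain map relation combined with the grading shift from $S(-\mathbf{a}_F)$ to $S(-\mathbf{a}_F-\mathbf{e}_i)$ pins the scalar down to $\pm 1$. This is the step I expect to be the main subtlety: naively one might worry that the multiplication by $x_i$ would appear as an $S$-scalar on the image and so leave $e_{t_i(F)}$ out of the subrepresentation generated by $e_F$; the resolution is that the factor of $x_i$ is already absorbed into the grading shift, so no explicit scalar survives.

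Once this is established, the induction is immediate. At $n=1,2$ the basis of $s_p(F_n)$ sits in $G$ by construction, so $\langle G\rangle(F_n)=s_p(F_n)$. For the inductive step, assume the equality at level $n\geq 2$; then any $(p-1)$-face $H$ of $X_{n+1}$ is, by Proposition \ref{fgtriangles}, of the form $t_i(F)$ for some $(p-1)$-face $F$ of $X_n$, and applying $s_p(t_i)$ to $e_F\in\langle G\rangle(F_n)$ places $\pm e_H$ in $\langle G\rangle(F_{n+1})$. Since these $e_H$ span $s_p(F_{n+1})$ as an $S$-module, $\langle G\rangle(F_{n+1})=s_p(F_{n+1})$, which completes the induction and shows that $G$ finitely generates $s_p$.
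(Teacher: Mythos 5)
Your proof is correct and takes essentially the same route as the paper: both arguments rest on Proposition \ref{fgtriangles}, which supplies the covering of the $(p-1)$-cells of $X_{n+1}$ by images of cells of $X_n$ under the three multiplication maps, and hence a finite generating set coming from the first two members of the family. You simply make explicit what the paper leaves implicit --- the choice of generating set $\{e_F : F \subset X_1 \text{ or } X_2\}$, the observation that $s_p(t_i)(e_F)=\pm e_{t_i(F)}$ with the factor $x_i$ absorbed by the twist $S(-\mathbf{a}_F)\to S(-\mathbf{a}_F-\mathbf{e}_i)$, and the induction on $n$ --- all of which is consistent with the paper's one-line appeal to the covering proposition.
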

\begin{proof}
Since the representation $s_p$ takes a morphism of $\overline{\F}$ to the map between the modules given by a corresponding chain map, we have that the morphisms of $\overline{\F}$ give all the possible maps between the modules that come from a compatible chain map. Therefore 
 Proposition~\ref{fgtriangles} shows us that the  representation $s_p$ is finitely generated by definition.
\end{proof}

Then we get the following.
\begin{proposition}
The representation 
$$\sigma_p:\overline{\F}\rightarrow \smod$$ taking resolution $\mathcal{F}$ to its $p^{th}$ syzygy module is finitely generated.
\end{proposition}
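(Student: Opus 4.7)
The plan is to realise $\sigma_p$ as a subrepresentation of $s_p$ and then apply noetherianity. Concretely, for each resolution $F_n$ in $\overline{\F}$, the $p$-th syzygy $\sigma_p(F_n)$ is by definition the kernel of the $p$-th differential $\partial_p : s_p(F_n) \to s_{p-1}(F_n)$, which is an $S$-submodule of $s_p(F_n)$. So the first step is to record that $\sigma_p \hookrightarrow s_p$ componentwise.

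The second step is to verify that this inclusion is actually natural, i.e.\ that $\sigma_p$ really defines a subfunctor. Given a morphism $F_n \to F_{n+1}$ in $\overline{\F}$, it comes from a compatible pair $({\bf f}, f)$, and by definition of a chain map the components $f_p$ commute with the differentials. Hence $f_p$ carries $\ker(\partial_p^{(n)})$ into $\ker(\partial_p^{(n+1)})$, so the assignment $F_n \mapsto \sigma_p(F_n)$ is a subfunctor of $s_p$ with the restrictions of the maps $s_p(f)$ as its structure morphisms.

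The third step is the finite generation argument itself. By the previous proposition $s_p$ is a finitely generated representation of $\overline{\F}$, and by the corollary the category $\mathrm{Rep}_S(\overline{\F})$ is noetherian. In a noetherian representation category every subrepresentation of a finitely generated representation is again finitely generated (this is exactly the second characterisation of a noetherian object in Definition~\ref{fgrep} combined with the standard equivalence). Applying this to $\sigma_p \subseteq s_p$ gives that $\sigma_p$ is finitely generated.

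The only non-routine part is the naturality check in the second step; once one is convinced that the morphisms of $\overline{\F}$ are built from compatible pairs and therefore commute with the differentials of the resolutions, the rest is a direct appeal to the noetherianity of $\mathrm{Rep}_S(\overline{\F})$ together with the finite generation of $s_p$ already established via Proposition~\ref{fgtriangles}. No additional combinatorial information about the cell complexes $X_n$ is needed beyond what went into showing $s_p$ finitely generated.
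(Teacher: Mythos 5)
Your proposal is correct and follows essentially the same route as the paper: exhibit $\sigma_p$ as a subrepresentation of $s_p$ and conclude by noetherianity of $\mathrm{Rep}_S(\overline{\F})$ together with the finite generation of $s_p$. The naturality check you include (chain maps commute with differentials, hence preserve kernels) is left implicit in the paper but is the right justification for the subfunctor claim.
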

\begin{proof}
First we note that the $p^{th}$ syzygy module is contained in the $p^{th}$ module of the non-minimal resolution. So we get that $\sigma_p$ is a subrepresentation of $s_p$. Further more we know that in a noetherian representation category any subrepresentation of finitely generated representation is finitely generated. Thus $\sigma_p$ is finitely generated as $s_p$ is.
\end{proof}

\subsection{Noetherianity results for families of cellular resolutions}
We begin this section by stating a proposition about the noetherianity properties of linear families. 
\begin{proposition}
\label{noethrep}
Let $\F$ be a linear family of cellular resolutions with finitely generated Hom sets. Then $\mathrm{Rep}_S(\F)$ is noetherian.
\end{proposition}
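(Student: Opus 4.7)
The plan is to apply Proposition \ref{noeth}, reducing the question to showing that every principal projective $P_x$ of $\mathrm{Rep}_S(\F)$ is noetherian. Equivalently, I plan to show that every subrepresentation $N \subseteq P_x$ is finitely generated.

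Fix $x = F_i$. By linearity, every morphism out of $F_i$ is a composition of consecutive morphisms, so $\mathrm{Hom}(F_i, F_j)$ is empty for $j < i$ (aside from possible self-maps). The finitely generated Hom-set hypothesis makes each $\mathrm{Hom}(F_i, F_j)$ finite for $j \geq i$, so each $P_x(F_j) = S[\mathrm{Hom}(F_i, F_j)]$ is a finitely generated free $S$-module. Moreover $P_x$ is cyclic, generated by $e_{\mathrm{id}_x} \in P_x(F_i)$, since every basis element $e_f$ equals $P_x(f)(e_{\mathrm{id}_x})$.

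For a subrepresentation $N \subseteq P_x$, the Hilbert basis theorem immediately gives that each $N(F_j)$ is a finitely generated $S$-submodule of $P_x(F_j)$. The key structural input from linearity is the collective surjectivity of the transition maps: every morphism $F_i \to F_{j+1}$ factors as $g \circ h$ for a consecutive $g \in \mathrm{Hom}(F_j, F_{j+1})$ and some $h : F_i \to F_j$, so
\[
\sum_{g \,\in\, \mathrm{Hom}(F_j, F_{j+1})} P_x(g)\bigl(P_x(F_j)\bigr) \;=\; P_x(F_{j+1}).
\]
I plan to combine this with noetherianity of $S$ to locate a level $J \geq i$ beyond which $N(F_j)$ is the $S$-span of pushforwards of $N(F_J)$; finite $S$-generating sets for $N(F_i), N(F_{i+1}), \ldots, N(F_J)$ then assemble into a finite generating set for $N$ as a subrepresentation.

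The main obstacle is pinning down the uniform stabilization level $J$. While noetherianity of $S$ trivially stabilizes each individual level, promoting this to a global bound requires the structural rigidity provided by the cellular resolution morphisms together with finite generation: in a linear family of cellular resolutions the consecutive morphisms satisfy enough relations (inherited from the commutative ring $S$) that any ``new'' $S$-generator appearing at a high level can be pulled back through the surjective transitions to force a strictly ascending chain of $S$-submodules inside a suitable finitely generated $S$-module, and Hilbert basis makes the production of new generators terminate. Once $J$ is secured, the principal projective is noetherian and the proposition follows via Proposition \ref{noeth}.
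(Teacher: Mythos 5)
Your setup is right and in one respect more careful than the paper's: you reduce to principal projectives via Proposition \ref{noeth} and then correctly interpret ``$P_x$ is noetherian'' as ``every subfunctor $N\subseteq P_x$ is finitely generated,'' whereas the paper's own proof reads ``ascending chains of subobjects'' as chains of subfamilies of $\F$ and in substance only verifies that $P_x$ itself (and its restrictions to subsequences of the family) is finitely generated via the post-composition maps $p_g$. But your proof has a genuine gap at exactly the point where all the content lies: the existence of the uniform stabilization level $J$. The paragraph you offer in its place is not an argument. Noetherianity of $S$ gives finite generation of each $N(F_j)$ separately, but the ambient modules $P_x(F_j)=S[\mathrm{Hom}(F_i,F_j)]$ have ranks that typically grow with $j$, so there is no single finitely generated $S$-module in which your ``strictly ascending chain'' can live, and the phrases ``structural rigidity'' and ``enough relations inherited from $S$'' do not identify which relations among the consecutive morphisms are being used or how they bound $J$.

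The gap is not cosmetic: linearity plus finite Hom sets genuinely do not suffice for the conclusion you want at that step. If consecutive Hom sets have two or more elements and distinct compositions give distinct morphisms, then $P_x$ is the free module over a free monoid on several generators, the analogue of a free associative algebra over $S$; the monomial subfunctor generated by the antichain of words $\{ba^nb\}_{n\geq 0}$ is not finitely generated, so no level $J$ exists. What actually rescues the examples in the paper is that the consecutive morphisms are multiplications by monomials and hence commute, so that $\mathrm{Hom}(F_i,F_j)$ is indexed by monomials of degree $j-i$ and $P_x$ behaves like a finitely generated module over a polynomial ring (equivalently, the poset of principal subfunctors is noetherian, as in the Gr\"obner-category machinery of Section \ref{grobnerfams}). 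Your proof would need to isolate and use that commutativity (or some equivalent noetherian-poset condition on the Hom sets) to produce $J$; as written, the claimed termination of ``new generators'' is asserted, not proved. For comparison, the paper avoids confronting arbitrary subfunctors altogether, which is a different (and weaker) resolution of the same difficulty.
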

\begin{proof}
We prove the noetherianity by showing that every ascending chain stabilises under all principal projectives.
First fix a principal projective $P_x$. 
 
By definition of a subobject, the only subobject for some $F_i$ in $\F$ are the cellular resolutions $F_k$ with $k<i$. Then it follows that the possible ascending chains of subobjects are either the whole family, or some subset of it.  

Next we want to study the behaviour of the principal projective $P_x$ on the different ascending chains of subobjects. Denote the cellular resolution corresponding to $x$ by $F_i$ for some $i$. Then we have no maps to resolutions $F_k$ for $k<i$, and so $P_x(F_k)=S[\mathrm{Hom}(F_i,F_k)]=S$ for $k<i$. Thus for the stability point of view it suffices to look at the family $\F$ from $F_i$ onwards. So we have 
$$ \F_{\geq i}: F_i\rightarrow F_{i+1}\rightarrow \ldots \rightarrow F_j\rightarrow\ldots$$
Applying the principal projective functor to the family $\F_{\geq i}$ we get the following sequence of free modules
$$P_x(\F_{\geq i}): S[\mathrm{Hom}(F_i,F_i)]\rightarrow S[\mathrm{Hom}(F_i, F_{i+1})]\rightarrow \ldots\rightarrow S[\mathrm{Hom}(F_i,F_j)]\rightarrow\ldots$$
The maps between $S[\mathrm{Hom}(F_i,F_j)]\rightarrow S[\mathrm{Hom}(F_i, F_{j+1})]$ are given by sending the generator $e_f\in S[\mathrm{Hom}(F_i,F_j)]$, $f\in \mathrm{Hom}(F_i,F_j)$, to $e_{g\circ f}\in S[\mathrm{Hom}(F_i, F_{j+1})]$ where $g\in \mathrm{Hom}(F_j,F_{j+1})$. Each morphism in $ \mathrm{Hom}(F_j,F_{j+1})$ gives a map between the free modules. For rest of the proof we refer to these maps given by the post-composition in Hom-sets as post composition by a morphism, denoted by $p_g$ for $g\in \mathrm{Hom}(F_j,F_{j+1})$. 

If we have a map from $h:S[\mathrm{Hom}(F_i,F_j)]\rightarrow S[\mathrm{Hom}(F_i,F_k)]$ for some $k>j>i$, then using the requirement for the family that all morphisms are made of compositions between the consecutive resolutions, we can consider each of the components of $h$. First we can write the map as 
$$S[\mathrm{Hom}(F_i,F_j)]\xrightarrow{h'} S[\mathrm{Hom}(F_i,F_{k-1})]\xrightarrow{p_g} S[\mathrm{Hom}(F_i,F_k)],$$
where $p_g$ is a suitable post-composition. This process can repeated and choosing the suitable compositions between each consecutive modules until we get $h$ as a composition of $p_g$s. Note that this is not necessarily unique decomposition.
Another important thing to note about the maps $p_g$ is that any generator in $S[\mathrm{Hom}(F_i, F_{j+1})]$ is given by the form  $p_g(e_f)=e_{g\circ f}$ where $e_f\in S[\mathrm{Hom}(F_i,F_j)]$, due to linear structure of the family.

Therefore if we have the full family, any generator of $S[\mathrm{Hom}(F_i,F_j)]$ is given by applying $j-i$ $p_g$ maps to the generators of $S[\mathrm{Hom}(F_i,F_i)]$. Hence in this case the principal projective is finitely generated or in other words it stabilises. 

If we are looking at some subfamily as the ascending chain, with the indices of the cellular resolutions denoted by $j_1,j_2,j_3,\ldots$, then the sequence looks the following after applying the principal projective
$$S[\mathrm{Hom}(F_i,F_{j_1})]\rightarrow S[\mathrm{Hom}(F_i, F_{j_2})]\rightarrow \ldots\rightarrow S[\mathrm{Hom}(F_i,F_{j_k})]\rightarrow\ldots$$
Again any map $S[\mathrm{Hom}(F_i,F_{j_k})]\rightarrow S[\mathrm{Hom}(F_i, F_{j_l})]$ becomes a composition of $p_g$ maps. Note that in this case we might have two  $p_g\circ p_{g'}$ or more as map between two free modules but not the individual components. Since the morphisms have not been restricted all maps from $\mathrm{Hom}(F_i,F_{j_k})$ to $\mathrm{Hom}(F_i, F_{j_l})$ are given by $k-l$ post-compositions, and any map in $\mathrm{Hom}(F_i, F_{j_l})$ consist of a map from $\mathrm{Hom}(F_i,F_{j_k})$ and post-compositions of consecutive maps. Thus we have that any generator in $S[\mathrm{Hom}(F_i,F_{j_k})]$ can written as image of some generator in $S[\mathrm{Hom}(F_i,F_{j_1})]$ which is finitely generated free module, and so we get that any subsequence in the family is finitely generated under the principal projective $P_x$. 

We have shown that any ascending chain of subobjects will be finitely generated under an arbitary principal projective, equivalently all ascending chains of subobjects stabilise under the principal projectives. Therefore all the principal projectives are noetherian and by Proposition \ref{noeth} $\mathrm{Rep}_S(\F)$ is noetherian.
\end{proof}


\begin{remark}
The noetherian representation depends on the morphisms, so for a general family of cellular resolutions we need at least two conditions to have noetherian representation category:
\begin{itemize}
\item finitely generated morphisms, i.e. in most cases quotient out the morphisms with the same chain map.
\item After finitely many steps the morphism can all be described by just giving the morphism between two consecutive cellular resolutions.
\end{itemize}
\end{remark}
The above remark gives us the following corollary of Proposition \ref{noethrep}.
\begin{corollary}
If $\F$ is a family of cellular resolutions such that it is linear after some finite sequence of length $i$ and for all the resolutions $F_j, F_k$,$j,k\leq i$, $\mathrm{Hom}(F_j,F_k)$ is finitely generated, then $\mathrm{Rep}_S(\F)$ is noetherian. 
\end{corollary}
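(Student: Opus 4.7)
The plan is to reduce to Proposition \ref{noethrep} applied to the linear tail of the family, using the finite-generation hypothesis on the Hom sets to control the prefix. By Proposition \ref{noeth} it suffices to show that every principal projective $P_x$ is noetherian, so I would fix such a $P_x$ corresponding to some $F_m\in \F$.

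First I would dispose of the easy case $m\geq i$: the relevant subfamily $\F_{\geq m}$ is itself linear and has finitely generated Hom sets (being generated by compositions of consecutive morphisms), so Proposition \ref{noethrep} applies directly to conclude $P_x$ is noetherian. For $m<i$ I would split the target index $j$ into the finite prefix $m\leq j\leq i$ and the linear tail $j>i$. In the prefix range, $P_x(F_j)=S[\operatorname{Hom}(F_m,F_j)]$ is finitely generated over $S$ by hypothesis, and since there are only finitely many such $j$, the principal projective restricted to the prefix is finitely generated. In the tail range, every morphism $F_m\to F_j$ is obtained by postcomposing an element of $\operatorname{Hom}(F_m,F_i)$ with a composition of consecutive morphisms $F_i\to F_{i+1}\to\cdots\to F_j$ in the linear tail. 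As in the proof of Proposition \ref{noethrep}, this means that a finite generating set of $P_x(F_i)$ (which exists by the prefix argument) together with the postcomposition maps $p_g$ along consecutive morphisms suffices to generate $P_x(F_j)$ for every $j>i$.

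Combining these observations, every ascending chain of subobjects stabilizes under $P_x$: the finitely many generators living in $P_x(F_m),\ldots,P_x(F_i)$ determine the image in every later term through postcomposition, so $P_x$ is noetherian and Proposition \ref{noeth} concludes. The main obstacle is verifying the factorization step: namely that every morphism $F_m\to F_j$ with $m<i<j$ really does split as a morphism $F_m\to F_i$ in the prefix followed by a composition of consecutive morphisms in the linear tail. This should follow from interpreting ``linear after a finite sequence of length $i$'' in the spirit of the remark preceding the corollary, so that any morphism with target beyond $F_i$ is built from consecutive pieces once the source has been carried into $F_i$; writing out the corollary cleanly mainly amounts to making this factorization precise and then repeating the bookkeeping from Proposition \ref{noethrep}.
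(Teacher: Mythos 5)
Your proposal follows essentially the same route as the paper's proof: apply Proposition \ref{noethrep} to the linear tail $\F_{>i}$, observe that the finite prefix contributes only finitely many finitely generated Hom sets, and combine to conclude each principal projective is finitely generated and hence noetherian via Proposition \ref{noeth}. Your version is somewhat more explicit about the case split on the source index and about the factorization of morphisms through the prefix, a step the paper's proof leaves implicit, but the underlying argument is the same.
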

\begin{proof}
If we look at the family consisting of the part $\F_{>i}$, then by Proposition \ref{noethrep} any principal projective is finitely generated on any subsequence. Since the discarded part in the sequence is finite, we only have finitely many Hom-sets between the cellular resolutions, and each of these sets is finitely generated. So when considering the whole family $\F$ or a subsequence that contains cellular resolutions from the first $i$ resolutions, we only need to add finitely many finite generating sets to the generators of $\F_{>i}$ under any principal projective $P_x$. Thus it is still finitely generated, and we have noetherianity for every principal projective and by Proposition \ref{noeth} also for $\mathrm{Rep}_S(\F)$.
\end{proof}

\subsection{Gr\"obner families of cellular resolutions}
\label{grobnerfams}
A common example of a linear family of cellular resolutions is the family consisting of powers o an ideal with some assumptions on the morphisms. Then one naturally wonders whether these very controlled families also satisfy the conditions of being Gr\"obner. For this we consider the following special type of a family.

Let $I$ be a monomial ideal with $m$ generators $g_1,g_2,\ldots,g_m$. Suppose that for each power of $I$, the module $S/I^k$ has a cellular resolution.
Let 
$$\F: F_1\rightarrow F_2\rightarrow \ldots\rightarrow F_i\rightarrow\ldots$$
be a family of cellular resolutions where $F_i=S/I^i$. Furthermore, let the only maps between consecutive resolutions in the family be multiplications by the generators of $I$ (again one morphism for each chain map) and the only map from $F_i$ to itself is the identity. Moreover, let us suppose all other maps are compositions of the multiplications. This kind of family is not only a small category but also directed and so we can use the Proposition \ref{bettergrobner} to study if it is  Gr\"obner.

\begin{example}
Let us consider the ideal $I=(x,y,z)$ and the cellular resolutions of the powers again and let $\F$ denote the family of non-minimal triangle resolutions. This family satisfies the conditions described above and in particular it is a directed category. 
Let $\F_F$ denote the category of morphisms from $F\in \F$ where the morphisms are commuting triangles. 
We want to study the set $|\F_F|$ for some $F\in\F$. 

First we want to show that the set $|\F_F|$ has an admissible order. We label each $f\in|\F_F|$ by the monomial multiplication it is associated to, and then take an ordering on $|\F_F|$ given by any monomial order on the monomials. This then gives us the admissible order on $|\F_F|$.

The set $|\F_F|$ is a poset with the order given by $x\leq y$ if there is a map $x\rightarrow y$. Moreover it is noetherian poset. This can be seen by considering the requirement for descending chain condition and no anti-chains. Take a descending chain 
$$f_1\geq f_2\geq f_3\geq\ldots$$
By definition of the order this means we must have a chain of maps 
$$F_1\leftarrow F_2\leftarrow F_3\leftarrow\ldots$$
in our category where the maps go in the direction of increasing powers. Hence the chain of decreasing cellular resolutions cannot go on forever but will either have to reach the resolution of $S/I$ or stabilise before that. Next we want to look at anti-chains. Observe that if we have $f_i: F\rightarrow F_i$ and $f_j:F\rightarrow F_j$ in $|\F_F|$ then we will have either $F_i\rightarrow F_j$ or $F_j\rightarrow F_i$ depending on which is the resolution of higher power. Thus we get that $f_i\geq f_j$ or $f_j\geq f_i$ for $i\neq j$. 
Then if we want an anti-chain we must have that all elements in it correspond to maps to the same cellular resolution. However, we only have finitely many such maps and so we can only have finite anti-chains. 

Then by Proposition \ref{bettergrobner} the category $\F$ is Gr\"obner. 
\end{example}

With the above example in mind we formulate the following proposition.
\begin{proposition}
Let $I$ be a monomial ideal.
Let $\F$ be a family of cellular resolutions where $F_i=S/I^i$ and let the only maps between consecutive resolutions in the family be multiplications by the generators of $I$ and the only map from $F_i$ to itself is the identity.
Then $\F$ is Gr\"obner.
\end{proposition}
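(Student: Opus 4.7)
The plan is to imitate the example of $I=(x,y,z)$ almost verbatim, taking care only where the finiteness of generators of $I$ enters. Since $\F$ is directed (all non-identity morphisms strictly increase the power), Proposition \ref{bettergrobner} applies, so it suffices to show that for every object $F \in \F$ the poset $|\F_F|$ admits an admissible order and is noetherian.

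First I would construct an admissible order on $|\F_F|$. By hypothesis every morphism out of $F = F_n$ is a composition of multiplications by generators $g_1,\ldots,g_m$ of $I$, so each $f \in |\F_F|$ is labelled by a monomial in the $g_i$'s (equivalently, a monomial in $S$ belonging to $I^{k-n}$ for target $F_k$). I would fix any monomial order on $S$ and pull it back to $|\F_F|$. Compatibility with the categorical composition (i.e.\ $u \le v$ implies $ut \le vt$ whenever the products are defined) then follows from the fact that monomial orders are admissible in the usual commutative-algebra sense, multiplication by a further generator preserves the chosen order.

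Next I would verify that $|\F_F|$ is noetherian as a poset, i.e.\ it satisfies DCC and has no infinite antichains. For DCC, a strictly descending chain $f_1 \ge f_2 \ge \cdots$ in $|\F_F|$ corresponds (by the definition of the order as reverse inclusion of principal subfunctors) to strictly shrinking targets $F_{i_1} \leftarrow F_{i_2} \leftarrow \cdots$ inside $\F$; since indices are positive integers bounded below by $n$, such chains must terminate. For antichains, given two morphisms $f_i : F_n \to F_i$ and $f_j : F_n \to F_j$ with $i < j$, linearity of $\F$ produces a morphism $F_i \to F_j$, and any such morphism composed with $f_i$ yields a morphism $F_n \to F_j$ comparable to $f_j$; hence all elements of an antichain must share the same target $F_k$. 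But $\operatorname{Hom}(F_n, F_k)$ is finite, since each morphism is multiplication by a monomial of degree $k-n$ in the finitely many generators $g_1,\ldots,g_m$, and there are only $\binom{k-n+m-1}{m-1}$ such monomials. Thus antichains are finite.

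The main obstacle I anticipate is the antichain argument: one has to be a little careful because morphisms to different targets might fail to be comparable a priori, but directedness of $\F$ rescues this by always producing a connecting morphism between the two targets. Once that point is handled, combining DCC and finite antichains gives noetherianity of $|\F_F|$, and Proposition \ref{bettergrobner} concludes that $\F$ is Gröbner.
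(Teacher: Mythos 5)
Your proof follows the paper's argument essentially verbatim: reduce via Proposition \ref{bettergrobner} to showing that each $|\F_F|$ admits an admissible order (pulled back from a monomial order through the monomial labels on morphisms) and is a noetherian poset, with DCC coming from the bounded-below power indices and finiteness of antichains from the claim that morphisms out of $F$ with distinct targets are comparable, forcing any antichain into a single finite Hom-set. Be aware that this comparability claim, which you share with the paper, is delicate --- for $I=(x,y)$ the morphisms ``multiply by $x$'' into $F_{n+1}$ and ``multiply by $y^2$'' into $F_{n+2}$ have distinct targets yet are incomparable since $y^2/x$ is not a monomial (the no-infinite-antichain conclusion can instead be rescued by Dickson's lemma applied to the monoid generated by the $g_i$) --- but since this is precisely the paper's own step, your proposal matches the intended proof.
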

\begin{proof}

Let $\F$ be a family of cellular resolutions as specified in the proposition and let $I$ be the defining ideal. Then since all the $\operatorname{Hom}$-sets are finite, and since we have no  selfmaps other than the identity, the family $\F$ is small and directed as a category. Therefore it suffices to study the set $|\F_F|$ for some arbitrary member $F$ of the family.  Recall that $\F_F$ is the category of all arrows from $F$ with commuting triangles as morphisms, and that $|\F_F|$ is the set of isomorphism classes. 

Each morphisms in $\F$ corresponds to multiplication by some monomial that consists of multiples of the monomial generators of $I$. If we consider the arrows in the category $\F_F$, we note that there are no objects that are arrows to powers smaller than $F$, so we only have objects corresponding to maps to powers higher than $F$ has. In the set of  isomorphism classes $|\F_F|$, we note that if two morphisms are associated to different multiplication they are not isomorphic and if we have two morphisms that correspond to the same monomial multiplication, then these are isomorphic. Therefore we can label our morphisms uniquely with the monomials corresponding to multiplication. Then taking any monomial order, say for example lexicographic, will give an admissible order on the set $|\F_F|$.

Next we want to look at $|\F_F|$ as a poset with the natural order $f\leq g$ if there exists a morphism $f\rightarrow g$. We want to show that this poset is noetherian. First let us consider any anti-chain in it, that is a chain of elements such that any two are not comparable. Let $f:F\rightarrow G$ and $g:F\rightarrow G'$ be two noncomparable objects. This means there is no map between $G$ and $G'$ that forms a commutative diagram, or in the other direction. By defintion the family satisfies that between any consecutive objects we have multiplication by the monomials in $I$, and all other maps are compositions of these. Then if $G$ and $G'$ are not the same resolutions, we can find a map between them that gives a commutative triangle with $f$ and $g$. This tells us that any anti-chain must be made of arrows to the same resolution. We only have finitely many of such objects, hence there cannot be an infite anti-chain.

Next we want to look at the descending chain condition on $|\F_F|$. Take any descending chain $f_1\geq f_2\geq f_3\geq...$ and consider the maps that form it as shown in Figure \ref{gmaps}.

\begin{figure}
\begin{center}
\begin{tikzpicture}
\node (A) at (-2,5) {$F$};
\node (1) at (3,5) {$G_1$};
\node (2) at (3,3.5) {$G_2$};
\node(3) at (3,2) {$G_3$};
\node (4) at (3,0.5) {$G_4$};
\node(5) at (3,-1) {$\vdots$};
\node(6) at (0.5,1.9) {$\vdots$};
\node(B) at (0.5,5.1) {$f_1$};
\node(C) at (0.5,4.1) {$f_2$};
\node(D) at (0.5,3.4) {$f_3$};
\node(E) at (0.5,2.5) {$f_4$};

\draw[->] (A)--(1);\draw[->]  (A)--(2);\draw[->]  (A)--(3);\draw[->]  (A)--(4); \draw[->] (5)--(4);\draw[->] (2)--(1);\draw[->] (3)--(2);\draw[->] (4)--(3);
\end{tikzpicture}
\end{center}
\caption{Commuting triangles in $|\F_F|$.}
\label{gmaps}
\end{figure}
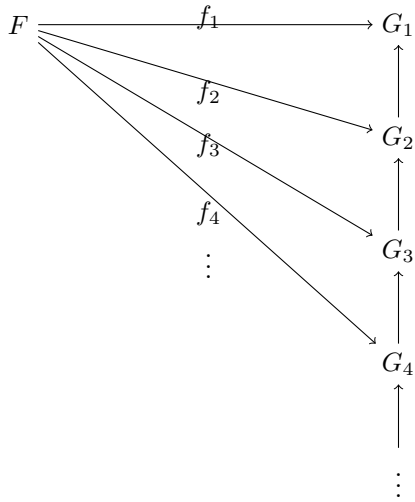
The vertical chain of maps is what gives the order relation. From the definition of the family we get that the chain of the vertical maps cannot go on forever as eventually we reach the resolution of $S/I$ and there are no resolutions mapping to it or our chain will stabilise before it. In either case eventually we must have $f_i=f_i+1$ in the descending chain as we only have one map that can be repeated. Note that we cannot end in the situation where the resolution that $F$ maps to are the same but we rotate possible maps as these are precisely the non-comparable cases.  Thus the poset $|\F_F|$ satisfying the descending chain condition and has no infinite anti-chains, so it is noetherian.

Since our choice of $F$ was arbitrary, we can then applying Proposition \ref{bettergrobner} gives us that $\F$ is a Gr\"obner category.
\end{proof}

\section{The syzygy functor}
The main interest for us in the families of cellular resolutions is on their syzygies. The $p$-th syzygy module can be written as a representation of a family $\F$. First we define a representation on the $p$-th free module.
\begin{definition}
The p-th module representation is a functor
$$s_p: \F\rightarrow \rmod$$
 such that $s_p(F_i)=p$-th free module in the resolution and $s_p(F_i\rightarrow F_j)$ is the restriction of the chain map from $F_i$ to $F_j$ on the $p$-th component.
\end{definition}

Next we define the syzygy representation for a family of cellular resolutions. 
\begin{definition}
\label{syzf}
Let $\F$ be a family of cellular resolutions. The $p$-th syzygy functor 
$$\sigma_p:\F\rightarrow \rmod$$
is defined by taking $F\in \F$ to its $p$-th syzygy module and the morphisms are restrictions of the chain maps. 
\end{definition}

\begin{proposition}
\label{subrep}
The representation $\sigma_p$ is a subrepresentation of $s_p$.
\end{proposition}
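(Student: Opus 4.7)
The plan is to verify the two defining properties of a subfunctor (i.e.\ subrepresentation): first, for each object $F\in\F$, the module $\sigma_p(F)$ sits inside $s_p(F)$ as an $S$-submodule; and second, for each morphism $\varphi:F\to F'$ in $\F$, the induced map $s_p(\varphi)$ restricts along $\sigma_p(F)\hookrightarrow s_p(F)$ to the map $\sigma_p(\varphi)$.

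For the first property, I would unpack the definition of the $p$-th syzygy module of a cellular resolution $F$: following the convention already used in the paper (see the proof of the analogous statement at the end of Section~3.1, where the $p$-th syzygy is identified as sitting inside the $p$-th free module), the $p$-th syzygy module is the kernel of the differential $d_p^F:F_p\to F_{p-1}$, which by acyclicity of the resolution coincides with the image of $d_{p+1}^F$. In either description, $\sigma_p(F)\subseteq F_p = s_p(F)$ as $S$-modules.

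For the second property, I would use the compatibility built into the definition of a morphism in $\CR$: any $\varphi=({\bf f},f):F\to F'$ carries a chain map ${\bf f}=(f_i)_{i\geq 0}$ satisfying $d_p^{F'}\circ f_p = f_{p-1}\circ d_p^F$ for every $p$. For $x\in\sigma_p(F)=\ker d_p^F$, this identity gives
\[
d_p^{F'}(f_p(x)) \;=\; f_{p-1}(d_p^F(x)) \;=\; f_{p-1}(0) \;=\; 0,
\]
so $f_p(x)\in\ker d_p^{F'}=\sigma_p(F')$. Hence $s_p(\varphi)=f_p$ restricts to a well-defined map $\sigma_p(F)\to\sigma_p(F')$, which by Definition~\ref{syzf} is exactly $\sigma_p(\varphi)$. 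Naturality of the restriction in $\varphi$ then follows from the naturality of $s_p$ itself.

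I do not expect any real obstacle here, as the content of the statement is essentially a functorial repackaging of the chain-map axiom. The only point that needs a moment of care is ensuring the indexing convention, namely that the $p$-th syzygy is taken to be $\ker d_p^F\subseteq F_p$ rather than a submodule of $F_{p-1}$; this is fixed by the paper's setup and is consistent with the argument already carried out in the triangle example.
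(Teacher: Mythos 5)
Your proof is correct in substance but takes a genuinely different route from the paper's. The paper's proof works at the level of objects: it invokes the fact that the minimal resolution sits inside a non-minimal one as a direct summand, and then asserts a natural transformation $\eta:\sigma_p\to s_p$ with monic components $\eta_F$; the closure of $\sigma_p(F)$ under the maps $s_p(\varphi)$ is left implicit. You instead identify $\sigma_p(F)$ with $\ker d_p^F\subseteq F_p=s_p(F)$ and verify the morphism condition explicitly from the chain-map identity $d_p^{F'}\circ f_p=f_{p-1}\circ d_p^F$, which is exactly the part the paper glosses over; this is the cleaner and more complete verification of the subfunctor axioms. The one point of divergence to be aware of: the paper's Definition~\ref{syzf} sends $F$ to ``its $p$-th syzygy module,'' and the paper's own proof (via the direct-summand remark) suggests this means the syzygy module of the \emph{minimal} resolution, which for a non-minimal $F$ is a proper direct summand of $\ker d_p^F$ rather than all of it. Under that reading your first property needs the extra sentence that the minimal syzygy module embeds into $\ker d_p^F\subseteq s_p(F)$ via the splitting, and your second property would additionally require that $f_p$ carries the minimal summand into the minimal summand of the target --- something the chain-map identity alone does not give and which the paper's proof also does not address. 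If instead one takes $\sigma_p(F)=\ker d_p^F$ (the convention your argument uses, and which is consistent with the indexing in the triangle example), your proof is complete as written and strictly more careful than the paper's.
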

\begin{proof}
We know that the minimal resolution is contained in a non-minimal one as a direct summand. 
If the family consist of minimal cellular resolutions, then  $\sigma_p$ is a finitely generated submodule of $s_p$ by definition of the syzygy module.
Otherwise the family $\F$ has at least one non-minimal resolution. Then we can take a natural transformation $\eta$ from $\sigma_p$ to $s_p$ given by an embedding. Then the natural transformation will have the maps $\eta_F:\sigma_p(F)\rightarrow s_p(F)$ to be monic and hence $\sigma_p$ is a subfunctor of $s_p$.
\end{proof}

We are interested in studying the finite generation of the syzygy functor. To do this we want to make use of the cell complex structure in cellular resolutions and this gives rise to the following definition of covering.
\begin{definition}
\label{coveringdef}
Let $F$ and $G$ be cellular resolutions such that $\mathrm{Hom}(F,G)$ is not empty. Let $X$ be the cell complex supporting $F$ and let $Y$ be the cell complex supporting $G$. We say that we have \emph{covering} of $Y$ by $X$ if the images of $X$ under the maps $f\in \mathrm{Hom}(F,G)$ cover $Y$, $\cup_{f\in \mathrm{Hom}(F,G)}f(X)=Y$.

Let $F_1,F_2,\ldots,F_r$ be cellular resolutions mapping to $G$. Let $X_i$ be the cell complex supporting $F_i$ and $Y$ be the cell complex supporting $G$. Then we say that \emph{$X$ is a covering of $Y$} if the images of $X_i$ under the maps $f\in \mathrm{Hom}(F_i,G)$ cover $Y$, $\cup_{f\in \mathrm{Hom}(F_i,G)}f(X_i)=Y$.
\end{definition}

\begin{definition}
Let $F_1,F_2,\ldots,F_r$ be cellular resolutions mapping to $G$. Let $X_i$ be the cell complex supporting $F_i$ and $Y$ be the cell complex supporting $G$. Then we say that we have \emph{d-covering} of $Y$ by $X_1,X_2,\ldots,X_r$ if the images of $d$-cells of $X_i$ cover $d$-cells of $Y$ under the maps $f\in \mathrm{Hom}(F_i,G)$ , $\cup_{f\in \mathrm{Hom}(F_i,G)}f(X_i)=Y$.
\end{definition}
\begin{remark}
In general taking the Taylor resolution family will not give a covering. This is due to not having enough maps between the simplices, see Section \ref{dimproblem} for more details.
\end{remark}

\begin{lemma}
\label{dcover}
The p-th module representation is finitely generated if and only if we have  $(p-1)$-covering of  $X_i$ by finitely many $X_j$s with $j<i$ for all $i$ large enough. 
\end{lemma}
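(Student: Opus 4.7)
My plan is to translate finite generation of $s_p$ in $\operatorname{Rep}_S(\F)$ into a geometric statement about cells, using the fact that $s_p(F_i)=\bigoplus_{\sigma}S(-\mathbf{a}_\sigma)$ is the free $S$-module on a basis indexed by the $(p-1)$-cells $\sigma$ of $X_i$. The key starting observation is that, by the compatibility definition of morphisms in $\mathbf{CellRes}$, any morphism $(\mathbf{f},f):F_j\to F_i$ induces on $s_p$ an $S$-linear map which sends the generator $e_\tau$ of the summand corresponding to a $(p-1)$-cell $\tau\in X_j$ to an $S$-linear combination of generators $e_\sigma$ with $\sigma\subseteq f(\tau)$. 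Consequently, the only basis elements of $s_p(F_i)$ that receive a non-zero contribution from the image of a generator of $s_p(F_j)$ are those associated to $(p-1)$-cells lying in the image of the cellular map.

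For the backward direction, suppose there exists $N$ such that for every $i>N$ the complex $X_i$ admits a $(p-1)$-covering by finitely many $X_j$ with $j<i$. I would take as candidate generating set the finite union $G$ of all basis elements $e_\tau\in s_p(F_j)$ for $j\leq N$; this is finite since each $X_j$ is a finite cell complex. By induction on $i$, I would show $s_p(F_i)\subseteq\langle G\rangle$. The case $i\leq N$ is immediate. For $i>N$, each $(p-1)$-cell $\sigma$ of $X_i$ lies in the image of some $(p-1)$-cell $\tau$ of an $X_j$ with $j<i$ under a cellular map underlying a morphism $\phi$; the compatibility together with the multigraded structure forces the coefficient of $e_\sigma$ in $s_p(\phi)(e_\tau)$ to be a unit, so $e_\sigma$ lies in the subrepresentation generated by $e_\tau$, which by the inductive hypothesis is in $\langle G\rangle$.

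For the forward direction, suppose $s_p$ is finitely generated. Each generator lives in some $s_p(F_{n_\ell})$, itself a finite free $S$-module, so after replacing each generator by the finite basis of the summand it belongs to I may assume the generating set consists of basis elements $e_{\tau_1},\ldots,e_{\tau_r}$ with $\tau_\ell\in X_{n_\ell}$. Set $N=\max_\ell n_\ell$. For any $i>N$ and any $(p-1)$-cell $\sigma$ of $X_i$, the basis element $e_\sigma$ must be an $S$-linear combination of images $s_p(\phi)(e_{\tau_\ell})$ for morphisms $\phi:F_{n_\ell}\to F_i$. Projecting onto the rank-one summand $S(-\mathbf{a}_\sigma)$ of $s_p(F_i)$ isolates $e_\sigma$ and forces at least one such image to have non-zero component there, which by the starting observation happens only when $\sigma$ lies in the image of the cellular map underlying $\phi$. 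This exhibits a $(p-1)$-covering of $X_i$ by the finitely many $X_{n_\ell}$, each with $n_\ell\leq N<i$.

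The main technical obstacle is the forward direction: a priori a clever $S$-linear combination of images might produce $e_\sigma$ even if no individual image supports it. The multigraded direct-sum decomposition $s_p(F_i)=\bigoplus_\sigma S(-\mathbf{a}_\sigma)$ is precisely what resolves this, since projection onto the summand indexed by $\sigma$ kills contributions in every other multidegree and reduces the analysis to checking summand-by-summand that some image hits $\sigma$ non-trivially, at which point compatibility translates the algebraic statement into the cellular-cover statement.
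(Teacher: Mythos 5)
Your proof is correct and follows essentially the same route as the paper's: both translate between basis elements of $s_p(F_i)$ and $(p-1)$-cells of $X_i$ via the compatibility of the chain map with the underlying cellular map, and both directions reduce finite generation to the matching of generators with covered cells. Your multigraded projection argument in the forward direction and your induction on $i$ in the backward direction are slightly tighter packagings of what the paper does more informally (the paper instead iteratively replaces covering complexes indexed above the cutoff by ones below it), but the underlying idea is identical.
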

\begin{proof}
First let us fix $p$.
Let $\F$ be a family of representations and suppose that the p-th module functor $s_p$ is finitely generated. By definition this means we have finitely many generators $\epsilon_1,\epsilon_2,\ldots,\epsilon_r$ in $s_p(F_i)$,$s_p(F_i),ldots, s_p(F_i)$ such that they generate all the modules $s_p(\F)$. Alternatively any generator $e$ of $F_j$, $j>i$, can be written as an image of one or more of the $\epsilon$s via maps in the sets $\mathrm{Hom}(F_k,F_j)$, $1\leq k\leq i$. 

Each of the generators $\epsilon_1,\epsilon_2,\ldots,\epsilon_r$ corresponds to a cell in the first $i$ cell complexes, and moreover these are all cells of dimension $p-1$. Denote these cells by $c_1,c_2,\ldots,c_r$. From the compatibility of the cellular resolution maps, if there is a generator mapping to a generator then the corresponding cells map to each other. Then the finite generation implies that choosing any $p-1$ dimensional cell $c$ in some $X_j$, there is a cellular map $g$ belonging to a cellular resolution morphism such that $g(c_k)=c$ for some $1\leq k\leq r$. Then any $X_i$ has a $(p-1)$-covering by finitely many $X)j$s, $j<i$, for large enough $i$.

Conversely, assume that we have a covering of $X_i$ by finitely many $X_j$s with $j<i$ for all $i$ large enough. 
So given any $X_j$, $j>i$, there are cell complexes $X_{k_1},X_{k_2},\ldots,X_{k_r}$ that cover $X_j$. We want to show that all $X_{k_1},X_{k_2},\ldots,X_{k_r}$ are below $i$. Suppose that one of them is not, call this one $X_k$. Since $k>i$ we have that its $(p-1)$-covering of it by some $X_{l_1},X_{l_2},\ldots,X_{l_s}$.
Note that composing the cellular map that the $X_{l_1},X_{l_2},\ldots,X_{l_s}$s have to $X_k$ and needed maps from $X_k$ to $X_j$, give a map from $X_{l_1},X_{l_2},\ldots,X_{l_s}$ to $X_j$. Since $X_{l_1},X_{l_2},\ldots,X_{l_s}$ give $(p-1)$-covering of $X_k$ we can replace $X_k$ in the covering set of cell complexes by the $X_{l_1},X_{l_2},\ldots,X_{l_s}$. If all of the $X_{l_1},X_{l_2},\ldots,X_{l_s}$ are below $i$, then we have $(p-1)$-covering of $X_j$ by cell complexes below $i$. Otherwise, we repeat the process for the cell complexes above $i$ to get $(p-1)$-coverings by finitely many cell complexes below $i$.
The process will always give cell complexes below $i$, otherwise we would contradict the composition properties of the morphisms.

Taking the first $i$ cell complexes such that we have $(p-1)$-covering for $X_j$, $j>i$, then their $(p-1)$-cells cover all the other $(p-1)$-cells. Again from the compatibility of the cellular resolution maps, this means any generator of $s_p(F_j)$ is reachable via the maps from one of the generators of the first $i$ modules for large enough $j$. we have finitely many $(p-1)$-cells in the first $i$ cell complexes, hence the first $i$ free modules $s_p(F_i)$ have finitely many generators all together. Thus we get a finite generating set for the representation $s_p$.
\end{proof}


\begin{theorem}
\label{covering}
If $\F$ is a family of cellular resolutions with noetherian representation category $\mathrm{Rep}_S(\F)$ such that the cell complex supporting $F_i$ is covered by the cell complexes supporting $F_{j}$, $j<i$, for all $i$ large enough.
Then the syzygy representation $\sigma_p$ is finitely generated for all $p$.
\end{theorem}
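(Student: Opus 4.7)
The plan is to combine the three ingredients that have been set up just before the statement: the covering hypothesis, Lemma \ref{dcover}, Proposition \ref{subrep}, and the noetherianity assumption. The overall logic is: covering $\Longrightarrow$ $(p-1)$-covering for every $p$ $\Longrightarrow$ $s_p$ finitely generated $\Longrightarrow$ $\sigma_p$ finitely generated (as a subrepresentation inside a noetherian ambient representation).

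First I would unpack the covering hypothesis. By Definition \ref{coveringdef}, saying that $X_i$ is covered by the $X_j$, $j<i$, means that the images of the cellular maps in $\bigcup_{j<i}\mathrm{Hom}(F_j,F_i)$ exhaust $X_i$ as a set of cells. Since a cell of dimension $d$ can only be hit by a cell of dimension $d$ under a cellular map compatible with a chain map (as observed in Section~\ref{triangleex}, maps that drop dimension fail to induce a map of label ideals), a full covering of $X_i$ automatically gives a $d$-covering of $X_i$ for every dimension $d$. In particular, for any fixed $p$ the hypothesis yields a $(p-1)$-covering of $X_i$ by finitely many $X_j$ with $j<i$, for all $i$ sufficiently large.

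Next I would apply Lemma \ref{dcover}: this is exactly the condition that makes the $p$-th module representation $s_p$ finitely generated. By Proposition \ref{subrep}, the syzygy representation $\sigma_p$ sits inside $s_p$ as a subrepresentation. Finally, since $\mathrm{Rep}_S(\F)$ is noetherian, every finitely generated representation is a noetherian object, meaning all of its subrepresentations are finitely generated. Applying this to the finitely generated $s_p$ gives that $\sigma_p$ is finitely generated, as desired.

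The argument is essentially an assembly step, so the only place where care is needed is the first one: making sure the full covering hypothesis really does specialise to a $(p-1)$-covering for every $p$. This is short but it is the substantive bridge between the topological hypothesis (covering of cell complexes) and the algebraic conclusion reached via Lemma \ref{dcover}. Once this is verified, the rest of the proof is a two-line citation of Lemma \ref{dcover}, Proposition \ref{subrep}, and the noetherian hypothesis on $\mathrm{Rep}_S(\F)$. No quantitative bound on the number of generators of $\sigma_p$ is needed; finite generation alone is what the noetherianity buys us.
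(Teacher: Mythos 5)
Your proposal is correct and follows essentially the same route as the paper's own proof: pass from the covering hypothesis to a $d$-covering for each $d$, invoke Lemma \ref{dcover} to get finite generation of $s_p$, and then use Proposition \ref{subrep} together with noetherianity of $\mathrm{Rep}_S(\F)$ to conclude for $\sigma_p$. Your extra remark justifying why a full covering specialises to a $(p-1)$-covering (cellular maps compatible with chain maps preserve cell dimension) is a small amplification of a step the paper states without comment.
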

\begin{proof}
Let $F_i$ denote the cellular resolutions in the family and let $X_i$ be the cell complex supporting $F_i$. 

If we have a covering of the cell complexes, then in particular we have a $d$-covering of $F_i$ for all dimensions $d$ by some finite number of lower cellular resolutions for large enough $i$. Then by Lemma \ref{dcover} we have that $s_d$ is finitely generated for all $d$. 

Now Proposition \ref{subrep} tells us that $\sigma_p$ is a subrepresenation of $s_p$ for all $p$. Since $s_p$ is finitely generated, then by noetherianity any subrepresentation is also finitely generated. Hence $\sigma_p$ is finitely generated for all $p$. 
\end{proof}
\begin{remark}
If one fixes a family of ideals, then the modules we get from them may have multiple cellular resolutions, minimal or non-minimal. Then showing that there is no covering for one of the possible cellular resolutions does not imply that the others might not have it.  For example if we look at the Taylor resolution for the ideal $I=(x,y,z)$, we will not get covering of the cell complexes with only three maps. 
\end{remark}

\begin{remark}
The condition on having covering for everything above large enough $i$ is a needed condition. For example consider the following family:
\end{remark}

\begin{example}
\label{squarex}
Let $S=k[x,y,z,w]$ be a graded polynomial ring and let $I=(xz,xw,yz,yw)$ be an ideal. Let us consider the family of cellular resolutions consisting of $S/I^n$. Each of these modules has a minimal cellular resolution, and the cell complexes supporting the first four modules $S/I,S/I^2,S/I^3$ and $S/I^4$ are shown in Figure \ref{squares}.

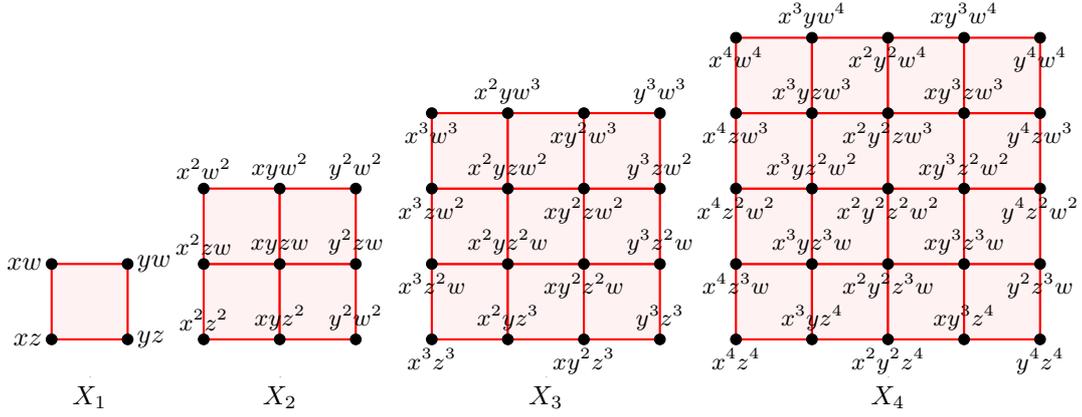
\begin{figure}
\begin{tikzpicture}
\filldraw[color=red, fill=red!5,  thick](0,0) -- (1,0) -- (1,1) -- (0,1) -- cycle;
\filldraw [black] (0,0) circle (2pt) node[anchor=east] {$xz$};
\filldraw [black] (1,0) circle (2pt) node[anchor=west] {$yz$};
\filldraw [black] (1,1) circle (2pt) node[anchor=west] {$yw$};
\filldraw [black] (0,1) circle (2pt) node[anchor=east] {$xw$};

\filldraw[color=red, fill=red!5,  thick](2,0) -- (3,0) -- (3,1) -- (2,1) -- cycle;
\filldraw[color=red, fill=red!5,  thick](3,0) -- (4,0) -- (4,1) -- (3,1) -- cycle;
\filldraw[color=red, fill=red!5,  thick](2,1) -- (3,1) -- (3,2) -- (2,2) -- cycle;
\filldraw[color=red, fill=red!5,  thick](3,1) -- (4,1) -- (4,2) -- (3,2) -- cycle;
\filldraw [black] (2,0) circle (2pt) node[anchor=south] {{\small $x^2z^2$}};
\filldraw [black] (3,0) circle (2pt) node[anchor=south] {\small $xyz^2$};
\filldraw [black] (3,1) circle (2pt) node[anchor=south] {\small $xyzw$};
\filldraw [black] (2,1) circle (2pt) node[anchor=south] {\small $x^2zw$};
\filldraw [black] (4,0) circle (2pt) node[anchor=south] {\small $y^2w^2$};
\filldraw [black] (4,1) circle (2pt) node[anchor=south] {\small $y^2zw$};
\filldraw [black] (2,2) circle (2pt) node[anchor=south] {\small $x^2w^2$};
\filldraw [black] (3,2) circle (2pt) node[anchor=south] {\small $xyw^2$};
\filldraw [black] (4,2) circle (2pt) node[anchor=south] {\small $y^2w^2$};

\filldraw[color=red, fill=red!5,  thick](5,0) -- (6,0) -- (6,1) -- (5,1) -- cycle;

\filldraw[color=red, fill=red!5,  thick](6,0) -- (7,0) -- (7,1) -- (6,1) -- cycle;

\filldraw[color=red, fill=red!5,  thick](7,0) -- (8,0) -- (8,1) -- (7,1) -- cycle;

\filldraw[color=red, fill=red!5,  thick](5,1) -- (6,1) -- (6,2) -- (5,2) -- cycle;

\filldraw[color=red, fill=red!5,  thick](6,1) -- (7,1) -- (7,2) -- (6,2) -- cycle;

\filldraw[color=red, fill=red!5,  thick](7,1) -- (8,1) -- (8,2) -- (7,2) -- cycle;

\filldraw[color=red, fill=red!5,  thick](5,2) -- (6,2) -- (6,3) -- (5,3) -- cycle;

\filldraw[color=red, fill=red!5,  thick](6,2) -- (7,2) -- (7,3) -- (6,3) -- cycle;

\filldraw[color=red, fill=red!5,  thick](7,2) -- (8,2) -- (8,3) -- (7,3) -- cycle;
\filldraw [black] (8,3) circle (2pt) node[anchor=south] {\small $y^3w^3$};
\filldraw [black] (5,0) circle (2pt) node[anchor=north] {\small $x^3z^3$};
\filldraw [black] (6,0) circle (2pt) node[anchor=south] {\small $x^2yz^3$};
\filldraw [black] (6,1) circle (2pt) node[anchor=south] {\small $x^2yz^2w$};
\filldraw [black] (5,1) circle (2pt) node[anchor=north] {\small $x^3z^2w$};
\filldraw [black] (7,0) circle (2pt) node[anchor=north] {\small $xy^2z^3$};
\filldraw [black] (7,1) circle (2pt) node[anchor=north] {\small $xy^2z^2w$};
\filldraw [black] (8,0) circle (2pt) node[anchor=south] {\small $y^3z^3$};
\filldraw [black] (8,1) circle (2pt) node[anchor=south] {\small $y^3z^2w$};
\filldraw [black] (6,2) circle (2pt) node[anchor=south] {\small $x^2yzw^2$};
\filldraw [black] (5,2) circle (2pt) node[anchor=north] {\small $x^3zw^2$};
\filldraw [black] (7,2) circle (2pt) node[anchor=north] {\small $xy^2zw^2$};
\filldraw [black] (8,2) circle (2pt) node[anchor=south] {\small $y^3zw^2$};
\filldraw [black] (6,3) circle (2pt) node[anchor=south] {\small $x^2yw^3$};
\filldraw [black] (5,3) circle (2pt) node[anchor=north] {\small $x^3w^3$};
\filldraw [black] (7,3) circle (2pt) node[anchor=north] {\small $xy^2w^3$};

\filldraw[color=red, fill=red!5,  thick](9,0) -- (10,0) -- (10,1) -- (9,1) -- cycle;
\filldraw[color=red, fill=red!5,  thick](10,0) -- (11,0) -- (11,1) -- (10,1) -- cycle;
\filldraw[color=red, fill=red!5,  thick](11,0) -- (12,0) -- (12,1) -- (11,1) -- cycle;
\filldraw[color=red, fill=red!5,  thick](12,0) -- (13,0) -- (13,1) -- (12,1) -- cycle;
\filldraw[color=red, fill=red!5,  thick](9,1) -- (10,1) -- (10,2) -- (9,2) -- cycle;
\filldraw[color=red, fill=red!5,  thick](10,1) -- (11,1) -- (11,2) -- (10,2) -- cycle;
\filldraw[color=red, fill=red!5,  thick](11,1) -- (12,1) -- (12,2) -- (11,2) -- cycle;
\filldraw[color=red, fill=red!5,  thick](12,1) -- (13,1) -- (13,2) -- (12,2) -- cycle;
\filldraw[color=red, fill=red!5,  thick](9,2) -- (10,2) -- (10,3) -- (9,3) -- cycle;
\filldraw[color=red, fill=red!5,  thick](10,2) -- (11,2) -- (11,3) -- (10,3) -- cycle;
\filldraw[color=red, fill=red!5,  thick](11,2) -- (12,2) -- (12,3) -- (11,3) -- cycle;
\filldraw[color=red, fill=red!5,  thick](12,2) -- (13,2) -- (13,3) -- (12,3) -- cycle;
\filldraw[color=red, fill=red!5,  thick](9,3) -- (10,3) -- (10,4) -- (9,4) -- cycle;
\filldraw[color=red, fill=red!5,  thick](10,3) -- (11,3) -- (11,4) -- (10,4) -- cycle;
\filldraw[color=red, fill=red!5,  thick](11,3) -- (12,3) -- (12,4) -- (11,4) -- cycle;
\filldraw[color=red, fill=red!5,  thick](12,3) -- (13,3) -- (13,4) -- (12,4) -- cycle;
\filldraw [black] (13,4) circle (2pt) node[anchor=north] {\small $y^4w^4$};
\filldraw [black] (9,0) circle (2pt) node[anchor=north] {\small $x^4z^4$};
\filldraw [black] (10,0) circle (2pt) node[anchor=south] {\small$x^3yz^4$};
\filldraw [black] (10,1) circle (2pt) node[anchor=south] {\small$x^3yz^3w$};
\filldraw [black] (9,1) circle (2pt) node[anchor=north] {\small$x^4z^3w$};
\filldraw [black] (11,0) circle (2pt) node[anchor=north] {\small$x^2y^2z^4$};
\filldraw [black] (11,1) circle (2pt) node[anchor=north] {\small$x^2y^2z^3w$};
\filldraw [black] (12,0) circle (2pt) node[anchor=south] {\small$xy^3z^4$};
\filldraw [black] (12,1) circle (2pt) node[anchor=south] {\small$xy^3z^3w$};
\filldraw [black] (13,0) circle (2pt) node[anchor=north] {\small$y^4z^4$};
\filldraw [black] (13,1) circle (2pt) node[anchor=north] {\small$y^2z^3w$};
\filldraw [black] (10,2) circle (2pt) node[anchor=south] {\small$x^3yz^2w^2$};
\filldraw [black] (9,2) circle (2pt) node[anchor=north] {\small$x^4z^2w^2$};
\filldraw [black] (11,2) circle (2pt) node[anchor=north] {\small$x^2y^2z^2w^2$};
\filldraw [black] (12,2) circle (2pt) node[anchor=south] {\small$xy^3z^2w^2$};
\filldraw [black] (13,2) circle (2pt) node[anchor=north] {\small$y^4z^2w^2$};
\filldraw [black] (10,3) circle (2pt) node[anchor=south] {\small$x^3yzw^3$};
\filldraw [black] (9,3) circle (2pt) node[anchor=north] {\small$x^4zw^3$};
\filldraw [black] (11,3) circle (2pt) node[anchor=north] {\small$x^2y^2zw^3$};
\filldraw [black] (12,3) circle (2pt) node[anchor=south] {\small$xy^3zw^3$};
\filldraw [black] (13,3) circle (2pt) node[anchor=north] {\small$y^4zw^3$};
\filldraw [black] (10,4) circle (2pt) node[anchor=south] {\small$x^3yw^4$};
\filldraw [black] (9,4) circle (2pt) node[anchor=north] {\small$x^4w^4$};
\filldraw [black] (11,4) circle (2pt) node[anchor=north] {\small$x^2y^2w^4$};
\filldraw [black] (12,4) circle (2pt) node[anchor=south] {\small$xy^3w^4$};

\filldraw [black] (0.5,-0.5) circle (0pt) node[anchor=north] {$X_1$};
\filldraw [black] (3,-0.5) circle (0pt) node[anchor=north] {$X_2$};
\filldraw [black] (6.5,-0.5) circle (0pt) node[anchor=north] {$X_3$};
\filldraw [black] (11,-0.5) circle (0pt) node[anchor=north] {$X_4$};
\end{tikzpicture}
\caption{The labelled cell complexes $X_1,X_2,X_3,X_4$ supporting the resolutions of $S/I,S/I^2,S/I^3$ and $S/I^4$ of Example \ref{squarex}. }
\label{squares}
\end{figure}

These resolutions are minimal and the first two are
$$ S\xleftarrow{\left[\begin{array}{cccc}xz& xw& yz& yw\end{array}\right]}S^4\xleftarrow{\left[\begin{array}{cccc}
-w&-y&0&0\\
z&0&-y&0\\
0&x&0&-w\\
0&0&x&z
\end{array}\right]}^4\xleftarrow{\left[\begin{array}{c}
y\\ -w\\ z\\ -x
\end{array}\right]} S\leftarrow 0$$
and
$$S\xleftarrow{\ d_1\ }S^9\xleftarrow{\ d_2\ }S^{12}\xleftarrow{\ d_3\ }S^4\leftarrow0$$
with maps
$$d_1= \left[\begin{array}{ccccccccc}
x^2z^2&x^2zw&x^2w^2&xyz^2&xyzw&xyw^2&y^2z^2&y^2zw&y^2w^2
\end{array}\right],$$
$$d_2= \left[\begin{array}{cccccccccccc}
-w&-y&0&0&0&0&0&0&0&0&0&0\\
z&0&-w&-y&0&0&0&0&0&0&0&0\\
0&0&z&0&-y&0&0&0&0&0&0&0\\
0&x&0&0&0&-w&-y&0&0&0&0&0\\
0&0&0&x&0&z&0&-w&-y&0&0&0\\
0&0&0&0&x&0&0&z&0&-y&0&0\\
0&0&0&0&0&0&x&0&0&0&-y&0\\
0&0&0&0&0&0&0&0&x&0&z&-w\\
0&0&0&0&0&0&0&0&0&x&0&z
\end{array}\right],$$
and $$d_3=\left[\begin{array}{cccc}
-y&0&0&0\\
w&0&0&0\\
0&-y&0&0\\
-z&w&0&0\\
0&-z&0&0\\
x&0&-y&0\\
0&0&w&0\\
0&x&0&-y\\
0&0&-z&w\\
0&0&0&-z\\
0&0&x&0\\
0&0&0&x\\
\end{array}\right].$$
Looking at the first two resolutions we can already see a pattern in them, which appears to continue if one computes further resolutions. Now we can use the results on the syzygy functor to show that the pattern is indeed there and the syzygies are finitely generated. 
The maps in this family are again the multiplication maps. On the cellular side they correspond to sending squares to squares. It is then not hard to see that we have covering of the cell complex for any of the complexes $X_i$ for $i\geq 2$. Then by Theorem \ref{covering} the family has finitely generated syzygy functors for all $p$.
The finite generation of syzygies holds in general for an $n$-cube, see Section \ref{cube}.
\end{example}

\section{Powers of ideals with finitely generated sygyzies}
In this section we focus on families coming from powers of ideals and finite generation of syzygies in them.
\subsection{Maximal monomial ideals of $S$}
\label{maxi}
Let $S=k[x_1,x_2,\ldots,x_n]$ be a graded polynomial ring and let $\m$ be the maximal monomial ideal. We know that the minimal resolution of $S/\m$ is supported on the $n$-simplex. In this section we want to focus on the families of cellular resolutions given by the powers of $\m$.  First we show that the resolutions $S/\m^k$, for $k>0$ are supported on a subdivided $n$-simplex.

\begin{definition}
\label{subdiv}
Let $X_n^k$ be the labelled cellular complex given by the Newton polytope of $\m^k$, i.e. the vertices are given by the exponent vectors of the monomials in $\m^k$, with subdivision with the following hyperplanes
$$H_{i,j}=\{y\in \mathbb{R}^n|y_i=j\}$$
for $0<i\leq n$ and $j\geq0$. The labels are given by the monomials in $\m^k$ and placed according to the Newton polytope. 
\end{definition}

\begin{remark}
On the level of vertices and edges, we have that any two vertices that differ by a single variable will be connected by an egde in this subdivision. 
\end{remark}

For an example of the cell complex see Figure \ref{nonminpic1}.

\begin{proposition}
The module $S/\m^k$ has a cellular resolution supported on $X_n^k$.
\end{proposition}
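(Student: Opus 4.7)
The plan is to apply Proposition \ref{bound}: to show that $\mathcal{F}_{X_n^k}$ is a cellular resolution of $S/\m^k$, it suffices to prove that the subcomplex $(X_n^k)_{\preceq \mathbf{b}}$ has vanishing reduced homology over $\mathbf{k}$ for every $\mathbf{b}\in\mathbb{N}^n$, and separately to check that the cokernel of the degree-$1$ differential is $S/\m^k$. The latter is immediate, since the vertex labels of $X_n^k$ are exactly the degree-$k$ monomials, which minimally generate $\m^k$.

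The heart of the argument is a geometric description of $(X_n^k)_{\preceq \mathbf{b}}$. I would start by observing that, because the label of a face is the lcm of its vertex labels, a face belongs to $(X_n^k)_{\preceq \mathbf{b}}$ if and only if every one of its vertices $\mathbf{v}$ satisfies $v_i\leq b_i$ for all $i$. Since each closed cell of $X_n^k$ is a convex polytope sitting inside the simplex $\Delta=\{y\in\mathbb{R}^n_{\geq 0}:\sum y_i=k\}$, this last condition is equivalent to the whole closed cell being contained in the coordinate box $B_\mathbf{b}=\prod_{i=1}^n [0,b_i]$. The key geometric point is that, since $\mathbf{b}\in\mathbb{N}^n$, each bounding hyperplane $\{y_i=b_i\}$ of $B_\mathbf{b}$ is itself one of the subdividing hyperplanes $H_{i,b_i}$; hence $B_\mathbf{b}$ cuts $X_n^k$ along cells and never in their interiors, so the underlying space of $(X_n^k)_{\preceq \mathbf{b}}$ is precisely $\Delta\cap B_\mathbf{b}$.

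With this identification in hand, I would split into two cases. If $|\mathbf{b}|<k$, then $\Delta\cap B_\mathbf{b}=\emptyset$, since any point in the intersection would satisfy $k=\sum y_i\leq \sum b_i<k$; in this case the subcomplex contains only the empty face and is acyclic. If $|\mathbf{b}|\geq k$, then $\Delta\cap B_\mathbf{b}$ is a nonempty intersection of two convex sets, hence a convex polytope, hence contractible and in particular acyclic over $\mathbf{k}$. Either way the criterion of Proposition \ref{bound} is satisfied.

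The step that requires the most care is the geometric alignment: that $B_\mathbf{b}$ meets $X_n^k$ along a union of whole cells rather than slicing through them. This relies entirely on integrality of $\mathbf{b}$ and on the fact that $X_n^k$ is cut out by all integer hyperplanes $H_{i,j}$. Once that compatibility is made precise, the rest of the proof reduces to the essentially trivial observation that convex polytopes are contractible.
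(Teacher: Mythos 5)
Your proof is correct and follows the same strategy as the paper's: invoke Proposition \ref{bound} and verify that each bounded subcomplex $(X_n^k)_{\preceq \mathbf{b}}$ is acyclic. Where the paper only asserts informally that the lcm-labelling ``cannot create holes,'' you supply the actual justification --- integrality of $\mathbf{b}$ aligns the box $B_{\mathbf{b}}=\prod_i[0,b_i]$ with the subdividing hyperplanes $H_{i,j}$, so the subcomplex is exactly the convex (hence empty or contractible) set $\Delta\cap B_{\mathbf{b}}$ --- which makes your version strictly more complete than the one in the paper.
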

\begin{proof}

Fix an $n$ and the polynomial ring $S$. Note that for each $n$ we work over a different polynomial ring $S_n=k[x_1,x_2,\ldots,x_n]$. 


Let us consider the cellular complex given by $X_n^k$. 
This cell complex has zero reduced homology.

Next consider the cell complex $X_{n^k \preceq b}$ for any $b\in \mathbb{N}^n$. Due to the labelling with the least common multiple, any cell in $X_{n^k \preceq b}$ will contain its boundary and the labeling gives us that we cannot create holes into the cell complex with bounding by some $b\in \mathbb{N}^n$. Then $X_{n^k \preceq b}$ still has zero reduced homology for any $b$, and by Proposition \ref{bound} $X_n^k$ supports the cellular resolution of $\m^k$.
\end{proof}

Next we want to look at the maps between the powers. From the Example \ref{triangleex} we would expect that the maps are gain multiplication by a variable. 
\begin{proposition}
Cellular resolution morphism corresponding to multiplication by a variable in $S$ give morphism between consecutive powers of $\m$.
\end{proposition}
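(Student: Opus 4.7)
The plan is to exhibit, for each variable $x_i$, a compatible pair $(g_i,\mathbf{f})$ between $\F_{X_n^k}$ and $\F_{X_n^{k+1}}$ whose induced label map $\varphi_{g_i}$ is multiplication by $x_i$. By Definition \ref{DEF}, this produces the desired morphism in $\CR$, and establishes the proposition.

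First I would define the cellular map geometrically. Under the identification of $X_n^k$ with the subdivided Newton polytope from Definition \ref{subdiv}, multiplication of monomial labels by $x_i$ corresponds precisely to translation by the standard basis vector $e_i$ in $\mathbb{R}^n$. Define $g_i:X_n^k\to X_n^{k+1}$ to be this translation followed by the inclusion of the translated simplex into $X_n^{k+1}$. Three checks are needed: that the image is a subcomplex of $X_n^{k+1}$, that $g_i$ sends $d$-cells to $d$-cells, and that the subdivision is respected. The first is immediate because a generator $m\in\m^k$ is taken to $x_im\in\m^{k+1}$, so the translated simplex sits inside the Newton polytope of $\m^{k+1}$. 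The second and third reduce to the observation that the translate of the hyperplane $H_{i,j}$ is $H_{i,j+1}$, which again lies in the subdividing family; the other hyperplanes $H_{l,j}$ ($l\neq i$) are fixed setwise. Hence translated cells are cells of the same dimension.

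Next I would compute $\varphi_{g_i}$ and build the chain map. On vertex labels, $\varphi_{g_i}$ sends $m\mapsto x_im$ by construction; on a higher face $F$ with vertices labelled $m_1,\ldots,m_r$, the image $g_i(F)$ has vertices labelled $x_im_1,\ldots,x_im_r$, and $\mathrm{lcm}(x_im_1,\ldots,x_im_r)=x_i\cdot\mathrm{lcm}(m_1,\ldots,m_r)$, so $\varphi_{g_i}$ is multiplication by $x_i$ on every face label, as required. Take $f_0$ to be the $1\times1$ matrix given by multiplication by $x_i$, and for $p\geq 1$ take $f_p$ to be the monomial matrix whose $(g_i(F),F)$-entry is the sign comparing the fixed orientations of $F$ and $g_i(F)$ and zero elsewhere. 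Then $f_0=\varphi_{g_i}$ and $f_p$ sends each generator $e_F$ to $\pm e_{g_i(F)}$, matching the compatibility requirement. The chain identity $\partial\circ f_p=f_{p-1}\circ\partial$ follows because the cellular differential is combinatorially determined by face incidences and the monomial ratios $x^{\mathbf{a}_F-\mathbf{a}_G}$, both of which $g_i$ preserves, since multiplication by $x_i$ cancels in the ratio $x^{\mathbf{a}_{g_i(F)}-\mathbf{a}_{g_i(G)}}=x^{\mathbf{a}_F-\mathbf{a}_G}$.

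The main potential obstacle is verifying that translation by $e_i$ genuinely restricts to a cellular map of the specific subdivided polytopes $X_n^k$ and $X_n^{k+1}$, rather than just matching their vertex sets; but this is exactly what is guaranteed by the fact that the subdividing hyperplane family $\{H_{l,j}\}$ of Definition \ref{subdiv} is closed under translation by $e_i$. Once this geometric point is in place, the algebraic verification of the compatible pair is routine, and repeating the construction with each $x_i$ gives the $n$ desired morphisms between consecutive powers of $\m$.
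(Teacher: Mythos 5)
Your proposal is correct and follows essentially the same route as the paper: identify multiplication by $x_i$ with the embedding of $X_n^{k}$ into the ``corner'' of $X_n^{k+1}$ containing $x_i^{k+1}$ (your translation by $e_i$), and then invoke the fact that such an embedding admits a compatible chain map. You simply carry out in detail the geometric and compatibility checks that the paper leaves implicit, which is a more careful rendering of the same argument.
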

\begin{proof}
Let us fix a variable, say $x_i$, for the morphism. Then we know that the set map induced by it is the multiplication by $x_i$, so a vertex with a monomial  label $l$ will map to $x_il$. On the cellular side, this corresponds to the embedding of $X_n^{k-1}$ to $X_n^k$ such that $X_n^{k-1}$ covers the ``corner" with $x_i^k$.
We also know that the embedding has a corresponding chain map, hence the multiplications by a variable form a map between consecutive powers. 
\end{proof}

The existence of these desired multiplication maps gives us a linear structure on the family. Since we cannot have other multiplication maps between distinct cellular resolutions of the powers of the maximal ideal, we have the following.

\begin{proposition}
\label{linfam}
The family of cellular resolutions given by $S/\m^k$, for $k\in \mathbb{N}$, with multiplication maps is a linear family of cellular resolutions.
\end{proposition}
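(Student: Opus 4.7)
The plan is to verify the two conditions of Definition \ref{linearfam}: the existence of at least one consecutive morphism and the decomposition of longer ones. The first is immediate from the preceding proposition, which produces a morphism $F_{k} \to F_{k+1}$ corresponding to multiplication by any variable $x_i$. So the task reduces to showing that every non-self morphism $f: F_k \to F_{k+m}$ with $m \geq 2$ can be written as a composition of $m$ consecutive morphisms.

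First I would observe that every morphism in $\F$ is a multiplication morphism by some monomial: by hypothesis the family carries only multiplication maps, and the induced label map $\varphi_f$ must carry $\m^k$ into $\m^{k+m}$, so the multiplying monomial $\mu$ has degree exactly $m$. Next, I would factor $\mu = x_{i_1} x_{i_2} \cdots x_{i_m}$ as a product of variables (with repetition allowed) and let $g_j: F_{k+j-1} \to F_{k+j}$ be the consecutive multiplication morphism by $x_{i_j}$ supplied by the preceding proposition. The composition $g_m \circ g_{m-1} \circ \cdots \circ g_1$ induces the label map that is multiplication by $\mu$, and its cellular component is the iterated corner-embedding of $X_n^k$ into $X_n^{k+m}$ that matches the cellular component of $f$.

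The main obstacle is to argue that this composition actually equals $f$ rather than merely being another morphism with the same induced label map. This is handled by the convention, articulated in the discussion following Definition \ref{linearfam}, that only one morphism per compatible chain map is retained in such families; under this convention the multiplication-by-$\mu$ morphism between $F_k$ and $F_{k+m}$ is unique, so it must agree with both $f$ and the composition $g_m \circ \cdots \circ g_1$. The selfmap clause in Definition \ref{linearfam} absorbs the identity morphisms $F_k \to F_k$, which are the only selfmaps present, so no additional argument is needed there, and we conclude that $\F$ is a linear family of cellular resolutions.
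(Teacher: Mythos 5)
Your proof is correct and follows essentially the same route as the paper: observe that the component $f_0$ of any morphism $F_k\to F_{k+m}$ must be multiplication by a monomial of degree $m$ because of the labels, then factor that monomial into variables to obtain the decomposition into consecutive morphisms. You are in fact slightly more careful than the paper's own (very terse) argument, since you explicitly invoke the one-morphism-per-chain-map convention to conclude that the composition equals the given morphism rather than merely inducing the same label map.
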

\begin{proof}
Given any two members of the family, say $S/\m^i$ and $S/\m^j$ with $i<j$, the possible maps between them must have $f_0$ a multiplication due to the labels. Moreover, we can split any monomial $m$ multiplication to maps given by a variable, the order may vary so the decomposition is not unique. This gives us the condition for any $f_{i,i+k}: F_i\rightarrow F_{i+k}$ there exists some consecutive morphisms such that $f_{i,i+k}=f_{i+k-1,i+k}\circ f_{i+k-2,i+k-1}\circ\ldots\circ f_{i+1,i+2}\circ f_{i,i+1}$. 
\end{proof}

Next we want to show that the subdivided simplicial complexes behave as the complexes in Example \ref{triangleex}. One can use the same method to show the tetrahedron is also covered after three subdivisions. However, drawing (or building) the cell complexes gets somewhat complicated from dimension four upwards, so we would like to have a more general proof for these coverings. For this we will make use of the following observations  from the low-dimensional cases: the square-free part is not fully covered since nothing can map to that cell. Once we only have a single square-free vertex, then the cell complex is covered by copies of one subdivision lower cell complexes. 

Note that we want to consider embeddings of cell complexes that correspond to the morphisms between $S/\m^{p-1}$ and $S/\m^p$, which are in fact the only possible embeddings.

\begin{proposition}
\label{maxcover}
Let $n$ be a positive integer and  $g:X_n^{p-1}\rightarrow X_n^p$  be an embedding of cell complexes for $p>0$ corresponding to the cellular resolution morphisms. 
Then  the embeddings of $g:X_n^{n-2}\rightarrow X_n^{n-1}$ cover $X_n^{n-1}$, and there cannot be a covering in the lower subdivisions. 
\end{proposition}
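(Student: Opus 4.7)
The plan is to classify the cells of $X_n^p$ explicitly and then determine, cell by cell, which lie in the image of some multiplication embedding $g_i$. To parametrize the cells, note that the subdivision of the affine simplex $\Delta_p := \{y \in \mathbb{R}_{\geq 0}^n : \sum_i y_i = p\}$ by the hyperplanes $H_{i,j}$ is the restriction of the standard unit-cube tiling of $\mathbb{R}_{\geq 0}^n$. Hence each maximal cell of $X_n^p$ arises from a single unit cube $\prod_i [a_i, a_i+1]$ with $a \in \mathbb{N}^n$: setting $d := p - \sum_i a_i$, the intersection with $\Delta_p$ is nonempty precisely when $0 \leq d \leq n$, and equals the hypersimplex $C(a,d)$ with vertex set $\{a + \sum_{i \in I} e_i : I \subseteq \{1,\dots,n\},\ |I| = d\}$, whose lcm label coincides with the Newton-polytope label of Definition \ref{subdiv}. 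Verifying that these hypersimplices are precisely the cells of $X_n^p$ (and that the labels agree) is the step I expect to be the main obstacle; once it is in place, the rest is a combinatorial check.

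With the parametrization in hand, the embedding $g_i$ coming from multiplication by $x_i$ shifts every vertex label by $e_i$, so its image is the subcomplex of cells all of whose vertices have $i$-th coordinate at least $1$; under the parametrization this is the single condition $a_i \geq 1$. Consequently a cell $C(a,d)$ lies in $\bigcup_i g_i(X_n^{p-1})$ if and only if $a \neq 0$, and the only potential obstruction to covering is the ``corner'' hypersimplex $C(0, p)$ with vertex set $\{\sum_{i \in I} e_i : |I| = p\}$.

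Both halves of the proposition then follow. For the non-covering claim at lower powers, the cell $C(0, p)$ is a genuine nondegenerate hypersimplex whenever $p$ falls in the range where it is nontrivial, and it is uncovered: for each variable $x_i$, the vertex $\sum_{j \in I} e_j$ for any $I \not\ni i$ has $i$-th coordinate $0$, so no single $g_i$ contains the whole cell. At the threshold power, by contrast, every cell of $X_n^p$ has some $a_i \geq 1$ and hence lies in the image of the corresponding $g_i$; since $g_i$ is cellular and preserves the face lattice, every face of a covered maximal cell is automatically in the image, and unioning over $i$ yields $X_n^p = \bigcup_i g_i(X_n^{p-1})$, which is exactly the covering required by Definition \ref{coveringdef}.
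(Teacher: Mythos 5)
Your proposal follows the same underlying strategy as the paper's proof: identify the squarefree ``corner'' cell as the unique obstruction to covering, observe that multiplication by $x_i$ can never reach a cell having a vertex whose label is not divisible by $x_i$, and note that the obstruction disappears exactly when the squarefree locus degenerates to the single vertex with label $x_1x_2\cdots x_n$. Where you improve on the paper is in the covering direction: the paper argues that ``the uncovered part is the polytope bounded by the hyperplanes $H_{i,1}$'' and that the embeddings cover because their common intersection is nonempty, which is not a valid implication as written; your cell-by-cell criterion ($C(a,d)$ lies in the image of $g_i$ iff $a_i\geq 1$, for nondegenerate cells) makes this step rigorous. The parametrization lemma you flag as the main obstacle is true and routine: the maximal cells of the arrangement subdivision of $k\Delta$ are the closures of the components of the complement of the $H_{i,j}$, which are exactly the nonempty full-dimensional slices (unit cube)$\,\cap\,k\Delta$, i.e.\ your hypersimplices, and the lcm labels agree. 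One small repair: the biconditional ``covered iff $a\neq 0$'' fails for the degenerate cell $C(0,n)$ (the single vertex $(1,\ldots,1)$, which is covered); restricting the criterion to maximal cells, and noting that faces of covered cells are covered, removes the issue.

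The substantive point you must address is the threshold. Your analysis shows that $C(0,p)$ is a nondegenerate maximal cell, and is uncovered, for every $p\leq n-1$ --- including $p=n-1$ --- and that every maximal cell of $X_n^p$ has $a\neq 0$ precisely once $p\geq n$. So your argument establishes that the first covering occurs for $X_n^{n-1}\rightarrow X_n^{n}$, and it \emph{disproves} the proposition as literally stated, which asserts that the embeddings $X_n^{n-2}\rightarrow X_n^{n-1}$ already cover $X_n^{n-1}$. You leave ``the threshold power'' unnamed, which hides this discrepancy. In fact the statement's indices appear to be off by one: the paper's own proof locates the critical vertex at $x_1x_2\cdots x_n$, which has degree $n$ and hence lives in $X_n^{n}$, and the $n=3$ case worked out in Section 3.1 (Proposition \ref{fgtriangles}) confirms that $X_1\rightarrow X_2$ fails to cover the central squarefree triangle while $X_2\rightarrow X_3$ is the first covering. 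So you have proven the corrected statement; state the threshold $p=n$ explicitly and note the mismatch, since the downstream application (Theorem \ref{covering} requires covering only for all $i$ large enough) is unaffected.
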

\begin{proof}
We want to consider the cell complexes as the labelled complexes to help keep track of the cells. Here we refer to the vertices by the monomial labels. 

Firstly showing that $n$ copies of $X_n^{n-3}$ do not cover $X_n^{n-2}$ follows from the square-free monomials and the multiplication maps.
Let us consider the cell formed by the square-free vertices. In the $(n-2)$ subdivision we have the degree $(n-1)$ monomials. In particular this means we have an $n$-simplex formed by these vertices. For it to be covered, we must have another cell of the same dimension mapping to it, which implies that at least one copy of lower subdivisions should cover the whole square-free simplex. We know that this cannot happen since we always have a vertex that does not contain any chosen variable. Hence it cannot be covered with a map corresponding to a multiplication by a variable. 

Next we want to show that the next subdivision is coverable and this property holds after it. 
In the cell complex the parts that are not covered by the maps can also be seen as the bounded polytope by the hyperplanes $H_{i,1}=\{y\in \mathbb{R}^n\,|\,y_i=1\}$.
In $X_n^{n-1}$ these planes intersect in a single point, namely the vertex with the label $x_1x_2\ldots x_n$. So we do not have any cells that are not covered by the embedded $X_n^{n-2}$.
Finally one wants to show that the covering continues in the higher steps. The intersection of the embeddings is formed from the vertices that have $x_1x_2\ldots x_n$ in their label. In $X_n^{i}$ for $i\geq n-1$ there are vertices with these labels, hence the intersection of all embeddings is not empty. Hence the embeddings cover the whole cell complex. 
\end{proof}

\begin{theorem}
Let $\m$ be a maximal ideal of the polynomial ring $S=k[x_1,x_2,\ldots,x_n]$. 
Then the syzygies of the modules $S/\m^p$ for $p>0$ are finitely generated.
\end{theorem}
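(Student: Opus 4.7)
The plan is to assemble the pieces that have already been established into a direct application of Theorem \ref{covering}. I would need to check two hypotheses for the family $\F$ of cellular resolutions $S/\m^p$ supported on the subdivided simplices $X_n^p$: noetherianity of the representation category $\mathrm{Rep}_S(\F)$, and covering of $X_n^p$ by the $X_n^{p-1}$ under the embedding morphisms for $p$ sufficiently large.

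First, I would note that by Proposition \ref{linfam} the family $\F$ equipped with the multiplication-by-variable morphisms is a linear family. Since the only morphisms between consecutive members come from the $n$ variables $x_1,\ldots,x_n$ (plus possibly identities), each Hom set $\mathrm{Hom}(S/\m^i, S/\m^j)$ is finite: any morphism is a composition of variable multiplications, and there are only finitely many monomials of each degree. Hence the Hom sets are finitely generated, and Proposition \ref{noethrep} applies to give that $\mathrm{Rep}_S(\F)$ is noetherian.

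Second, I would invoke Proposition \ref{maxcover}, which precisely states that the embeddings $g:X_n^{p-1}\to X_n^p$ coming from multiplication by variables cover $X_n^p$ for all $p\geq n-1$. This gives the covering hypothesis of Theorem \ref{covering} with the bound ``for all $i$ large enough'' witnessed by $i\geq n-1$.

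With both hypotheses verified, Theorem \ref{covering} applies directly and gives that the syzygy representation $\sigma_p$ is finitely generated for every $p$, which is the conclusion. The proof is really just a verification and assembly step; no genuine obstacle remains, since the substantive work (noetherianity for linear families and the combinatorial covering of the subdivided simplex) has already been carried out in Propositions \ref{noethrep}, \ref{linfam}, and \ref{maxcover}. The only subtlety worth flagging is that the covering fails for small $p$ (for instance when $p<n-1$ the square-free face cannot be reached by any variable multiplication), but Theorem \ref{covering} only requires covering eventually, so this causes no issue.
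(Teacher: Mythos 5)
Your proof is correct and follows essentially the same route as the paper: linearity from Proposition \ref{linfam} plus finite Hom sets gives noetherianity of $\mathrm{Rep}_S(\F)$ via Proposition \ref{noethrep}, and Proposition \ref{maxcover} supplies the eventual covering needed to apply Theorem \ref{covering}. Your remark that the failure of covering for small $p$ is harmless because the theorem only requires covering eventually is a useful clarification that the paper leaves implicit (and you correctly identify $n$ consecutive morphisms rather than the paper's ``three'', which is a leftover from the $n=3$ example).
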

\begin{proof}
From Proposition \ref{linfam} we have that the family of resolutions $S/\m^p$ has linear structure, and we have defined the morphisms to be only three between consecutive powers so the Hom sets are finitely generated. Then by Proposition \ref{noethrep} we have that the family has noetherian representation category. This together with Proposition \ref{maxcover} allows us to apply the 
Theorem \ref{covering} and we have that the syzygies are finitely generated. 
\end{proof}

\subsection{``Cube ideals"}
\label{cube}
In this section let $S=k[x_1,x_2,\ldots,x_{2n}]$ be a polynomial ring in $2n$ variables. 
Given an $n$-dimensional cube, label it with monomials such that along each of the $n$ edge directions we assign a pair of variables $x_i$ and $x_j$ giving $x_i$ to one end of the edges in this direction and $x_j$ in the other. The labels on the vertices are then squarefree monomials of degree $n$. See Figure \ref{squares} for an example of 2-dimensional case and Figure \ref{cubes} for the 3-dimensional labeled cube and the first subdivision.  The labelling of the cube can also be thought of assigning a pair to the parallel hyperplanes that cut out the cube, with each of the hyperplanes associated to one of the varaibles in the pair. The label on a vertex is then monomial of all the variables of the hyperplanes it sits in.

Let us denote the pairing of variables with $\pair$ that consists of  $P_1,P_2,\ldots, P_n$ where $P_i$ is the set of indices of the pair assigned to the $i$-the edge direction.  Then we can describe the set of labels on the $n$-cube as the following
$$I_{\pair}=\left\{x_{i_1}x_{i_2}\ldots x_{i_n}| i_j\in P_j\right\}.$$
Moreover this set is a square-free monomial ideal of $S$, and the $n$-cube supports the resolution of $S/I_{\pair}$.
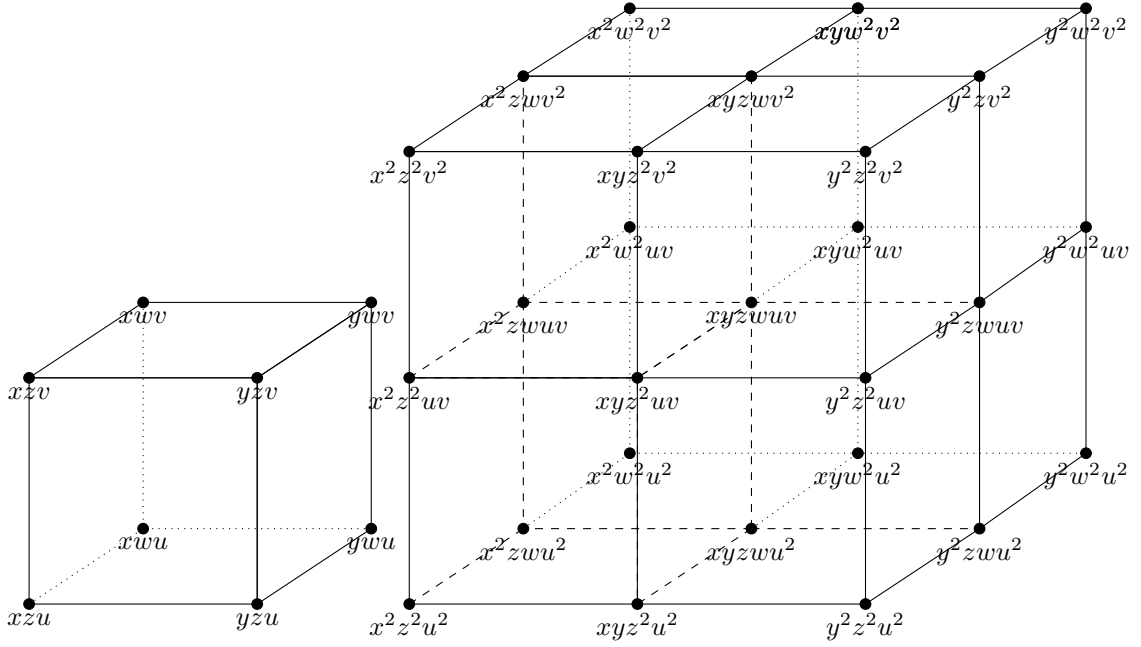
\begin{figure}
\begin{tikzpicture}
\draw[black] (0,0)--(3,0)--(3,3)--(0,3)--cycle;
\draw[black] (4.5,1)--(3,0)--(3,3)--(4.5,4)--cycle;
\draw[black] (1.5,4)--(4.5,4)--(3,3)--(0,3)--cycle;
\draw[black, dotted](0,0)--(1.5,1);
\draw[black, dotted](1.5,4)--(1.5,1);
\draw[black, dotted](4.5,1)--(1.5,1);

\filldraw [black] (0,0) circle (2pt) node[anchor=north] {$xzu$};
\filldraw [black] (3,0) circle (2pt) node[anchor=north] {$yzu$};
\filldraw [black] (3,3) circle (2pt) node[anchor=north] {$yzv$};
\filldraw [black] (0,3) circle (2pt) node[anchor=north] {$xzv$};
\filldraw [black] (4.5,1) circle (2pt) node[anchor=north] {$ywu$};
\filldraw [black] (4.5,4) circle (2pt) node[anchor=north] {$ywv$};
\filldraw [black] (1.5,4) circle (2pt) node[anchor=north] {$xwv$};
\filldraw [black] (1.5,1) circle (2pt) node[anchor=north] {$xwu$};


\draw[black] (5,0)--(8,0)--(8,3)--(5,3)--cycle;
\draw[black, dashed] (9.5,1)--(8,0)--(8,3)--(9.5,4)--cycle;
\draw[black, dashed] (6.5,4)--(9.5,4)--(8,3)--(5,3)--cycle;
\draw[black, dashed](5,0)--(6.5,1);
\draw[black, dashed](6.5,4)--(6.5,1);
\draw[black, dashed](9.5,1)--(6.5,1);
\draw[black](5,3)--(5,6)--(8,6)--(8,3)--cycle;
\draw[black] (6.5,7)--(9.5,7)--(8,6)--(5,6)--cycle;
\draw[black] (6.5,7)--(9.5,7)--(10.9,7.9)--(7.9,7.9)--cycle;
\draw[black,dashed](6.5,4)--(6.5,7);
\draw[black, dotted](6.5,4)--(7.9,5);
\draw[black, dotted](7.9,7.9)--(7.9,5);
\draw[black, dotted](10.9,5)--(7.9,5);
\draw[black, dotted](10.9,5)--(10.9,7.9);
\draw[black, dotted](10.9,5)--(9.5,4);
\draw[black, dashed](9.5,7)--(9.5,4);
\draw[black, dotted](6.5,1)--(7.9,2);
\draw[black, dotted](7.9,5)--(7.9,2);
\draw[black, dotted](10.9,2)--(7.9,2);
\draw[black, dotted](9.5,1)--(10.9,2)--(10.9,5);
\draw[black](8,0)--(11,0)--(11,3)--(8,3);
\draw[black](8,6)--(11,6)--(11,3);
\draw[black](10.9,7.9)--(13.9,7.9)--(12.5,7)--(11,6);
\draw[black](9.5,7)--(12.5,7)--(12.5,1)--(13.9,2)--(13.9,5)--(12.5,4)--(11,3);
\draw[black](13.9,7.9)--(13.9,5);
\draw[black](11,0)--(12.5,1);
\draw[black, dashed](9.5,1)--(12.5,1);
\draw[black, dashed](9.5,4)--(12.5,4);
\draw[black, dotted](10.9,2)--(13.9,2);
\draw[black, dotted](10.9,5)--(13.9,5);

\filldraw [black](5,0) circle (2pt) node[anchor=north] {$x^2z^2u^2$};
\filldraw [black](8,0) circle (2pt) node[anchor=north] {$xyz^2u^2$};
\filldraw [black](8,3) circle (2pt) node[anchor=north] {$xyz^2uv$};
\filldraw [black](5,3) circle (2pt) node[anchor=north] {$x^2z^2uv$};
\filldraw [black](9.5,1) circle (2pt) node[anchor=north] {$xyzwu^2$};
\filldraw [black](9.5,4) circle (2pt) node[anchor=north] {$xyzwuv$};
\filldraw [black](6.5,4) circle (2pt) node[anchor=north] {$x^2zwuv$};
\filldraw [black](6.5,1) circle (2pt) node[anchor=north] {$x^2zwu^2$};
\filldraw [black](5,6) circle (2pt) node[anchor=north] {$x^2z^2v^2$};
\filldraw [black](8,6) circle (2pt) node[anchor=north] {$xyz^2v^2$};
\filldraw [black](6.5,7) circle (2pt) node[anchor=north] {$x^2zwv^2$};
\filldraw [black](9.5,7) circle (2pt) node[anchor=north] {$xyzwv^2$};
\filldraw [black](10.9,7.9) circle (2pt) node[anchor=north] {$xyw^2v^2$};
\filldraw [black](7.9,7.9) circle (2pt) node[anchor=north] {$x^2w^2v^2$};
\filldraw [black](7.9,5) circle (2pt) node[anchor=north] {$x^2w^2uv$};
\filldraw [black](10.9,5) circle (2pt) node[anchor=north] {$xyw^2uv$};
\filldraw [black](10.9,7.9) circle (2pt) node[anchor=north] {$xyw^2v^2$};
\filldraw [black](7.9,2) circle (2pt) node[anchor=north] {$x^2w^2u^2$};
\filldraw [black](10.9,2) circle (2pt) node[anchor=north] {$xyw^2u^2$};
\filldraw [black](11,0) circle (2pt) node[anchor=north] {$y^2z^2u^2$};
\filldraw [black](11,3) circle (2pt) node[anchor=north] {$y^2z^2uv$};
\filldraw [black](11,6) circle (2pt) node[anchor=north] {$y^2z^2v^2$};
\filldraw [black](13.9,7.9) circle (2pt) node[anchor=north] {$y^2w^2v^2$};
\filldraw [black](12.5,7) circle (2pt) node[anchor=north] {$y^2zv^2$};
\filldraw [black](12.5,1) circle (2pt) node[anchor=north] {$y^2zwu^2$};
\filldraw [black](13.9,2) circle (2pt) node[anchor=north] {$y^2w^2u^2$};
\filldraw [black](13.9,5) circle (2pt) node[anchor=north] {$y^2w^2uv$};
\filldraw [black](12.5,4) circle (2pt) node[anchor=north] {$y^2zwuv$};

\end{tikzpicture}
\caption{A three dimensional cube and the first subdivision of the three dimensional cube labeled with the square ideal rules in variables $x,y,z,w,u,v$.}
\label{cubes}
\end{figure}

By subdividing the $n$-cube we mean an $n$-cube that consists of smaller cubes subdividing the edges into $p$ parts. Now the labeling is done with the same pairs as for the $n$-cube, but the labels moving along the directions are $x_i^p$, $x_i^{p-1}x_j,\ldots,x_ix_j^{p-1},x_j^p$. Alternatively,  the subdivisions can be thought of cutting by $(p-1)$  hyperplanes parallel to the pairs of hyperplanes bounding to the cube. Then each of these hyperplanes is given a monomial $x_i^p$, $x_i^{p-1}x_j,\ldots,x_ix_j^{p-1},x_j^p$ and the monomial on the vertex is again the monomial given by all the variables of the hyperplanes where the vertex is. 
Denote the $n$ cube with subdivision to $p$ edge sections by $C_n^p$.
\begin{proposition}
Fix a pairing $\pair$ on the edge directions. 
The cell complex $C_n^p$ supports the resolution of $S/I^n_p$ where $I_{\pair}=\left\{x_{i_1}x_{i_2}\ldots x_{i_n}| i_j\in P_j\right\}$.
\end{proposition}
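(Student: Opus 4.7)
The plan is to apply Proposition \ref{bound} in exactly the same spirit as in the proof that $X_n^k$ supports a resolution of $S/\m^k$. The essential new ingredient is that the subdivided cube $C_n^p$ has a product structure that makes the acyclicity check for each $(C_n^p)_{\preceq \mathbf{b}}$ reduce to the one-dimensional case.

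First I would check that the vertex labels of $C_n^p$ are exactly the minimal monomial generators of $I_{\pair}^p$. By construction a vertex lies on one subdividing hyperplane from each of the $n$ parallel pairs, and its label is the product of the associated monomials, so every vertex label has the form $\prod_{i=1}^n x_{a_i}^{k_i} x_{b_i}^{p-k_i}$ with $\{a_i,b_i\}=P_i$ and $0\le k_i\le p$. Conversely any product of $p$ generators of $I_{\pair}$ must, in each pair $P_i$, distribute $p$ occurrences between $x_{a_i}$ and $x_{b_i}$, so its exponent vector has precisely this form. Thus the vertex labels match the minimal generators of $I_{\pair}^p$.

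Next I would exploit the cubical product structure. Since the $n$ pairs $P_1,\ldots,P_n$ are disjoint, the cell complex $C_n^p$ factors as an $n$-fold product of the subdivided interval $C_1^p$, and the labeling is multiplicative across this product: the label on a face $F_1\times\cdots\times F_n$ is the product of the labels on the factors, because each variable contributes only to the edge direction whose pair contains it. Consequently, for any $\mathbf{b}\in\mathbb{N}^{2n}$,
\[
(C_n^p)_{\preceq\mathbf{b}} \;=\; \prod_{i=1}^n (C_1^p)_{\preceq\mathbf{b}^{(i)}},
\]
where $\mathbf{b}^{(i)}$ denotes the restriction of $\mathbf{b}$ to the two coordinates indexed by $P_i$. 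In a single one-dimensional factor, the vertex $x_{a_i}^{k} x_{b_i}^{p-k}$ lies in $(C_1^p)_{\preceq\mathbf{b}^{(i)}}$ iff $k\le b_{a_i}$ and $p-k\le b_{b_i}$, i.e.\ $k$ runs through the integer interval $[\max(0,p-b_{b_i}),\,\min(p,b_{a_i})]$, together with all edges joining consecutive such vertices. This factor is therefore either empty or a connected subinterval of the subdivided segment, hence contractible.

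Taking the product, $(C_n^p)_{\preceq\mathbf{b}}$ is either empty or a product of contractible spaces, and in either case is acyclic over $\mathbf{k}$. Proposition \ref{bound} then gives that $C_n^p$ supports a cellular resolution of $S/I_{\pair}^p$. The only step requiring any care is the product-structure claim — specifically that the labeling respects the factorization of the cube — and this is immediate once one uses the hyperplane description of the labels together with the disjointness of the pairs $P_i$; the rest reduces to the trivial case $n=1$.
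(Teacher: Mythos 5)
Your argument is correct and follows essentially the same route as the paper: both proofs invoke Proposition \ref{bound} and show that $(C_n^p)_{\preceq\mathbf{b}}$ is either empty or contractible. Your explicit factorization of $C_n^p$ as an $n$-fold product of subdivided intervals is a cleaner justification of the step the paper states more informally (that the bounded subcomplex ``will still consist of cubes, and will be contractible''), and the added check that the vertex labels are exactly the generators of $I_{\pair}^p$ is a welcome detail the paper leaves implicit.
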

\begin{proof}

Let us consider the cell complex $(C_{n}^p)_{\preceq{b}}$ for any $b\in\mathbb{N}^n$. With our chosen labelling, bounding a single coordinate in the exponent vector results in cutting the subdivided cube along one of the hyperplanes dividing it, unless all vertices are included or none. If the vector $b$ has entries $b_i$ and $b_j$ for some pair $ij$ such that $b_i+b_j<p$, then $(C_{n}^p)_{\preceq{b}}$ is an empty complex. Now we may assume that in $b$ the entries for each pair are such that we do not get an empty complex. Then  $(C_{n}^p)_{\preceq{b}}$  will still consist of cubes, and will be contractible. 
Thus it will be acyclic.
\end{proof}

Next we want to look at the maps between the powers of $I_{\pair}$ for a fixed $\pair$. In particular, we want the maps from (subdivided) cubes to (subdivided) cubes that maps $m$-cells to $m$-cells for all $m$. These maps correspond to embedding the 
$C_n^{p-1}$ to $C_n^p$, and to make the labels well behaved for that the algebraic side of the map has to be a multiplication by one of the monomials of $C_n^1$. These maps are enough to give a covering of the cell complexes.

\begin{proposition}
\label{cubecover}
Fix a dimension $n$, then $C_n^{1}$ covers $C_n^p$ for $p\geq2$. 
\end{proposition}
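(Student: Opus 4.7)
The plan is to identify each morphism from the resolution supported on $C_n^1$ to the resolution supported on $C_n^p$ as an embedding of $C_n^1$ onto a unit subcube of $C_n^p$, and then observe that these unit subcubes already tile $C_n^p$. Because the subdivided cube factors as a product of subdivided intervals, one per pair in $\pair$, the whole argument will reduce to independent one-dimensional checks along each pair.

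First I would parametrize the morphisms $F_1 \to F_p$ in the family. As in Proposition~\ref{linfam}, the family of powers of $I_{\pair}$ with multiplication-by-generator maps is linear, so every such morphism arises as a composition of $p-1$ multiplications, each by one of the $2^n$ generators $\prod_k x_{l_k}$ with $l_k \in P_k$. Writing $P_k = \{i_k, j_k\}$ and tracking the total exponent in each pair, such a composition is multiplication by a monomial
$$m(c) \;=\; \prod_{k=1}^n x_{i_k}^{c_k}\, x_{j_k}^{p-1-c_k}$$
for some $c = (c_1,\ldots,c_n) \in \{0,1,\ldots,p-1\}^n$, and conversely every such $c$ is realised by choosing independently, for each pair $P_k$, which variable to include at each of the $p-1$ steps.

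Next I would identify the image of the corresponding cellular map $g_c : C_n^1 \to C_n^p$. The vertex labels of $C_n^1$ are $\prod_k x_{i_k}^{a_k} x_{j_k}^{1-a_k}$ with $a_k \in \{0,1\}$, so multiplication by $m(c)$ takes each of them to $\prod_k x_{i_k}^{a_k+c_k} x_{j_k}^{(p-c_k)-a_k}$. These are exactly the $2^n$ vertex labels of the unit subcube $U_c \subseteq C_n^p$ whose $x_{i_k}$-exponent runs through $\{c_k, c_k+1\}$, and by the rigidity of labelled cube-to-cube maps the image of $g_c$ is precisely $U_c$.

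Finally I would close the argument by noting that as $c$ ranges over $\{0,1,\ldots,p-1\}^n$ the subcubes $U_c$ exhaust all $p^n$ unit subcubes of $C_n^p$, which by construction tile the subdivided cube, so every cell of $C_n^p$ is a face of some $U_c$. Therefore
$$\bigcup_{g \in \mathrm{Hom}(F_1,F_p)} g(C_n^1) \;=\; \bigcup_{c} U_c \;=\; C_n^p,$$
which is the required covering. The main bookkeeping obstacle is matching multiplication monomials to subcube positions, but the pair structure decouples this coordinate-by-coordinate, so no serious difficulty arises.
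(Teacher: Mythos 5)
Your proposal is correct and follows essentially the same route as the paper: both arguments show that every unit subcube of $C_n^p$ is obtained from $C_n^1$ by multiplication by a suitable monomial (a product of $p-1$ generators of $I_{\pair}$), so the corresponding embeddings exhaust all $p^n$ subcubes and hence cover $C_n^p$. Your version just makes explicit the coordinate-by-coordinate bookkeeping that the paper's proof states more briefly in terms of the bounding hyperplanes of each subcube.
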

\begin{proof}
We want to look at the cell maps coming from the multiplication by elements from $C_n^1$. So we have $2^n$ multiplication maps between consecutive $C_n^{p-1}$ and $C_n^p$. 
On the cell complex side these correspond to embeddings. 
The first subdivision to $C_n^{2}$ consist of $2^n$ cubes.

The first cube $C_n^{1}$ maps to every $n$-cube in $C_n^p$.
This follows from every subdividing cube is bounded by consecutive hyperplanes, i.e. ones that have varaibles $x_i^mx_j^{m'}$ and $x_i^{m-1}x_j^{m'+1}$ associated to them, thus each cube has labels that are multiple of the labels of $C_n^{1}$ by some monomial. Hence there is a map taking $C_n^{1}$ to every $n$-cube in $C_n^p$ and we have a covering. 
\end{proof}

Denote the family of cellular resolutions  coming from the resolutions supported on the labeled $n$-cube by $\F_C$. As with the maximal ideals we want to restrict the morphisms such that for each chain map there is only one morphism.

\begin{proposition}
For a fixed $n$ the family $\F_C$  has finitely generated syzygies.
\end{proposition}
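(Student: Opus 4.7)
The plan is to follow the same strategy used for powers of the maximal ideal in Section \ref{maxi}. That is, I want to verify the hypotheses of Theorem \ref{covering} for the family $\F_C$: namely, that $\mathrm{Rep}_S(\F_C)$ is noetherian, and that the cell complex $C_n^p$ supporting the $p$-th member is covered by the cell complexes supporting earlier members for all $p$ large enough. Once both are in place, Theorem \ref{covering} immediately yields finite generation of $\sigma_t$ for every $t$.

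First I would verify linearity of $\F_C$ in the sense of Definition \ref{linearfam}. The morphisms in $\F_C$ between consecutive resolutions are the $2^n$ multiplications by the square-free degree-$n$ monomials of $I_{\pair}$ (one compatible pair for each chain map, as throughout the paper). Any morphism $F_p \to F_q$ corresponds algebraically to multiplication by a monomial in $I_{\pair}^{q-p}$, which factors as a product of $q-p$ generators of $I_{\pair}$; each factor gives one of the chosen morphisms between consecutive resolutions. This shows every morphism decomposes as a composition of consecutive morphisms, so $\F_C$ is linear. The Hom sets are clearly finite ($\leq 2^n$ in the consecutive case and controlled by the above factorizations otherwise), so Proposition \ref{noethrep} applies and $\mathrm{Rep}_S(\F_C)$ is noetherian.

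Next I would invoke the covering statement, Proposition \ref{cubecover}, which says that the images of $C_n^1$ under the multiplication maps cover $C_n^p$ for every $p \geq 2$. Combined with linearity, this gives the required covering of $C_n^p$ by cell complexes supporting $F_j$ for $j < p$: any cell in $C_n^p$ lies in the image of some embedding $C_n^1 \hookrightarrow C_n^p$, and this embedding factors through $C_n^{p-1}$ via a consecutive multiplication map followed by an earlier embedding. In fact one can just note that $C_n^{p-1}$ covers $C_n^p$ directly, since any small $n$-cube of $C_n^p$ sits in the image of one of the $2^n$ multiplication embeddings of $C_n^{p-1}$ (the argument in the proof of Proposition \ref{cubecover} works at every subdivision level, not only from $C_n^1$).

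Finally, applying Theorem \ref{covering} to $\F_C$ gives that $\sigma_t$ is finitely generated for all $t$. The main (small) obstacle I anticipate is being careful that the covering condition of Theorem \ref{covering}, stated in terms of $F_j$ with $j < i$, really does follow from Proposition \ref{cubecover}; but since the consecutive embeddings exist and compose correctly in the linear family, this is a bookkeeping check rather than a substantive difficulty. No new combinatorial work beyond what was done for the maximal ideal case should be needed.
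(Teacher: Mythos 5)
Your proposal is correct and follows essentially the same route as the paper: establish linearity and finiteness of the Hom sets to invoke Proposition \ref{noethrep} for noetherianity of $\repS(\F_C)$, then combine Proposition \ref{cubecover} with Theorem \ref{covering}. The extra bookkeeping you supply (factoring morphisms through consecutive multiplications and noting that the covering by $C_n^1$ yields covering by $C_n^{p-1}$) is implicit in the paper's shorter argument.
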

\begin{proof}
The family $\F_C$ has $2^n$ maps between consecutive powers, which are multiplications by the monomials in $I_{\pair}$. Now all the other maps are compositions of these, so the family satisfies the condition of being linear. Moreover since the Hom sets are finite, we can apply the Proposition \ref{noethrep} to get that $\repS(\F_C)$ is noetherian.
Now using the Proposition \ref{cubecover} we can apply Theorem \ref{covering} to get the result.
\end{proof}

\subsection{Equigenerated ideals}
Next we want to look at ideals where the generators have the same degree.
\begin{definition}
A monomial ideal is said to be \emph{equigenerated} if all the generators have the same degree.
\end{definition}
The maximal ideals, cube ideals, and their powers are all examples of equigenerated ideals. We know the behaviour of these from the previous sections, so the focus now is on equigenerated ideals that are neither the maximal ideal nor cube ideal or a power of one them.

 A useful observation one can make is that given an equigenerated ideal $I$ in $n$ variables in degree $d$ we have inclusion of $I$ to $\m_n^d$. If one takes the power of $I$, this new ideal will have degree $2d$, and the possible maps between $X_n^d$ and $X_n^{2d}$ are given by multiplications corresponding to the monomials in $\m_n^d$.
Another point is that we can give a new orientation on the cell complex to emphasize the parts we want. In the equigenerated case a natural choice is to consider the subcomplex in $X_n^d$ coming from deleting all the vertices that are not in the generators of the equigenrated ideal $I$. Call this subcomplex $X_I^d$.   We can write the resolution of $S/\m_n^d$ as follows
$$ 0\leftarrow S\xleftarrow{d_1}F_1\oplus F'_1\leftarrow\dots\leftarrow F_i\oplus F'_i\xleftarrow{d_{i+1}}F_{i+1}\oplus F'_{i+1}\leftarrow\dots$$
where the generators of $F_i$ correspond to the cells in $X_I^d$ and $F'_i$ has generators corresponding to the other cells in $X_n^d$. The map $d_i$ can be written as a block matrix $\left[\begin{array}{cc}
   A  &  B\\
    0 & C
\end{array}\right]$ where $A$ corresponds to the map of $F_{i+1}$ to $F_i$, $B$ is the map from $F'_{i+1}$ to $F_i$, and $C$ is the map from $F'_{i+1}$ to $F'_i$.

First let us consider the equigenerated ideals that given by the bounds on the exponent vector. Each vertex label can be identified with a degree vector $a=(a_1,a_2,\ldots,a_n)$ where $a_n$ is the degree of $x_n$ in the monomial. Let $b\in \mathbb{N}^n$, then $a\preceq b$ if $a_i\leq b_i$. Denote by $I_{\preceq b}$ the ideal of monomials bound by $b$ in a fixed $\m_n^d$.

\begin{proposition}
\label{boundcover}
The cellular resolution of $I_{\preceq b}^m$ is supported on the cell complex $X_{n,\preceq mb}^{md}$, and the cell complexes $X_{n,\preceq kb}^{kd}$ have coverings for large enough $k$.

\end{proposition}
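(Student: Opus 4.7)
The proof splits naturally into three parts, following the structure of Section~\ref{maxi}. First, I would establish the combinatorial identification
$$I_{\preceq b}^m \;=\; \{\, x^a \;:\; |a|=md,\ a\preceq mb\,\},$$
which says that the integer points in the $m$-th dilate of the Newton polytope of $I_{\preceq b}$ (the ``bounded simplex'' $\{x\in\mathbb{R}_{\geq 0}^n : \sum x_i = d,\ x_i\leq b_i\}$) decompose as $m$-fold sums of integer points in the polytope itself. One containment is immediate. For the reverse, I would induct on $m$: given $a$ with $|a|=md$ and $a\preceq mb$, peel off a generator $x^v$ of $I_{\preceq b}$ satisfying $\max(0, a_i-(m-1)b_i)\leq v_i\leq \min(a_i, b_i)$ componentwise and $|v|=d$. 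Feasibility of such a $v$ is a transportation-style condition that can be verified directly; equivalently, this polytope is the base polytope of a rank-$d$ polymatroid and therefore has the integer decomposition property.

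Second, with the identification in hand, I would invoke Proposition~\ref{bound} to show that $X_{n,\preceq mb}^{md}$ supports the cellular resolution of $I_{\preceq b}^m$. For any $c\in\mathbb{N}^n$, the bounded subcomplex $\bigl(X_{n,\preceq mb}^{md}\bigr)_{\preceq c}$ coincides with $\bigl(X_n^{md}\bigr)_{\preceq \min(c, mb)}$, which is acyclic since the unrestricted subdivided simplex $X_n^{md}$ already supports the cellular resolution of $\m^{md}$ by the results of Section~\ref{maxi}. Combined with Step~1 --- which ensures that the vertex labels of $X_{n,\preceq mb}^{md}$ generate precisely $I_{\preceq b}^m$ --- this yields the first assertion.

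Third, for the covering, I would use the multiplication-by-generator morphisms: each $x^v \in I_{\preceq b}$ gives an embedding $X_{n,\preceq (k-1)b}^{(k-1)d} \hookrightarrow X_{n,\preceq kb}^{kd}$. A cell of the target with label $x^c$ lies in the image of such an embedding precisely when there exists a generator $x^v$ with $v\preceq c$ and $c-v\preceq (k-1)b$, which is the same feasibility problem as in Step~1. Mirroring Proposition~\ref{maxcover}, I would identify the potential uncovered region (cells whose labels have too many coordinates $c_i$ close to the upper bound $kb_i$, forcing the required lower bound on $v$ beyond $|v|=d$) and show it contracts to emptiness once $k$ is large enough relative to the data $(n,d,b)$.

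The main obstacle is Step~1: although the integer decomposition property of the bounded simplex is well known (being a polymatroid base polytope), a self-contained inductive verification of the feasibility bounds $\sum_i \max(0, a_i-(m-1)b_i) \leq d \leq \sum_i \min(a_i, b_i)$ requires some care, particularly when $|b|>d$. Once Step~1 is established, the acyclicity in Step~2 and the ``eventually covering'' argument in Step~3 are direct adaptations of the corresponding arguments for $\m^k$ in Section~\ref{maxi}.
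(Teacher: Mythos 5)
Your proposal is correct, and its three-part skeleton (identify the generators of $I_{\preceq b}^m$ with the lattice points of the dilated bounded polytope, verify acyclicity via Proposition \ref{bound}, then establish the covering) is the same as the paper's; but both substantive steps are executed by genuinely different arguments. For the identification, the paper argues by contradiction: it factors a vertex label of $X_{n,\preceq mb}^{md}$ into $m$ degree-$d$ monomials, assumes some factor exceeds the bound $b$ in a coordinate $j$, and derives a contradiction with the aggregate bound $\sum_i a_{ij}\le mb_j$ via an AM--GM averaging inequality. Your inductive peeling --- choosing a factor $v$ with $\max(0,a_i-(m-1)b_i)\le v_i\le\min(a_i,b_i)$ and $|v|=d$, i.e.\ the integer decomposition property of the base polytope --- instead constructs the decomposition explicitly; the feasibility inequalities you flag as the main obstacle do hold (both $\sum_i\min(a_i,b_i)\ge d$ and $\sum_i\max(0,a_i-(m-1)b_i)\le d$ follow from $|a|=md$ and $a\preceq mb$ by short counting arguments), and this route is arguably the more robust one, since a single factor violating $b_j$ does not by itself contradict the aggregate bound in coordinate $j$ (other factors can compensate), which is precisely the pressure point of the paper's averaging step. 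For the covering, the paper does not rerun the analysis of Proposition \ref{maxcover} on the bounded complex as you propose: it observes that all vertex labels of $X_{n,\preceq kb}^{kd}$ factor through $I_{\preceq b}$, so no cell outside the bounded subcomplex maps into it, and then restricts the covering already established for the ambient complexes in Section \ref{maxi}. Your direct identification of the uncovered region costs a re-examination of that region for the bounded complex, but avoids having to check that the ambient covering maps restrict to maps between the bounded subcomplexes; either route closes the argument.
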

\begin{proof}
From the Proposition \ref{bound} we know that the cell complex given by bounding $X_n^d$ with the given vector is an acyclic cell complex. 
Furthermore, if we bound $X_{n,\preceq mb}^{md}$ with any vector $a\preceq b$, it is the same as bounding $X_n^d$, i.e. giving an acyclic cell complex, and if we choose some $c$ such that $b\preceq c$, then it gives us the whole complex $X_n^d$. Thus bounding $X_{n,\preceq mb}^{md}$ with any vector will give an acyclic cell complex and by Proposition \ref{bound} it supports the cellular resolution of the module given by the monomial labels of $X_{n,\preceq mb}^{md}$.

The monomials generating $I_{\preceq b}^m$ appear as vertex labels in $X_{n,\preceq mb}^{md}$, since any monomial in $I_{\preceq b}^m$ has an exponent vector with entries at most $m$ times the entries of monomials in $I_{\preceq b}$. 
We want to show that there are no other vertices in the $X_{n,\preceq mb}^{md}$. To do this we assume there is a vertex that has a monomial label which does not come from generators of  $I_{\preceq b}^m$. The monomial label can be written as a product of $m$ monomials of degree $d$. We can then write the exponent vector of this monomial as $\left(\sum_{i=1}^m a_{i1},\sum_{i=1}^m a_{i2},\ldots,\sum_{i=1}^m a_{in}\right)$ where $(a_{i1},a_{i2},\ldots,a_{in})$ is the exponent vector of the $i$-th monomial $a_i$. If this monomial is not a generator in $I_{\preceq b}^m$ then at least one of the $a_i$ has to be outside of $I_{\preceq b}$, that is there exists at least one $a_{ij}> b_j$ for some $i$ and $j$. We can then focus on those $j$'s that contain entries $>b_j$, and use $k$ to denote the number of the monomials with $j$-th entry $>b_j$. Then we have 
$$mb_j\geq\sum_{i=1}^m a_{ij}=\sum_{i=0,a_{ij}>b}^m a_{ij}+\sum_{i=0, a_{ij}\leq b}^m a_{ij},$$
we can divide by $m$ since it is a positive integer and get
$$b_j\geq\frac{\sum_{i=0,a_{ij}>b}^m a_{ij}}{m}+\frac{\sum_{i=0, a_{ij}\leq b}^m a_{ij}}{m}.$$
On the first sum we can apply the inequality $\sum_{i=1}^m\frac{\lambda_m}{m}\geq\prod_{i=1}^m(\lambda_i)^{1/m}$ and use that each of the terms is strictly greater than $b_i$ to get 
$$\frac{\sum_{i=0,a_{ij}>b}^m a_{ij}}{m}\geq \prod_{i=0,a_{ij}>b}^m (a_{ij})^{1/k}>\prod_{i=0}^k b_j^{1/k}=b_j.$$
Then combining this with the earlier inequality we have
$$b_j\geq b_j+\frac{\sum_{i=0, a_{ij}\leq b}^m a_{ij}}{m},$$
Which is a contradiction and hence we cannot have any vertices with labels not coming from $I_{\preceq b}^m$ in $X_{n,\preceq mb}^{md}$. Moreover, since $X_{n,\preceq mb}^{md}$ satisfies the acyclicity conditions it supports the cellular resolution of $I_{\preceq b}^m$.

Furthermore, by having all the vertex labels being products of monomials from $I_{\preceq b}$ implies that we cannot have cells outside $X_{n,\preceq kb}^{kd}$ mapping into $X_{n,\preceq mb}^{md}$ with $1\leq k\leq m-1$. We know that the cell complexes $X_{n}^{td}$ have covering for large enough $t$, thus $X_{n,\preceq mb}^{md}$ is also covered for large enough $m$.
\end{proof}

The cellular resolutions of $I_{\preceq b}^m$ form a linear family with their structure of maps and by combining Proposition \ref{boundcover} and Theorem \ref{covering} we get the following: 

\begin{corollary}
\label{boundfg}
The family of cellular resolutions of $I_{\preceq b}^m$ has finitely generated syzygy representation.
\end{corollary}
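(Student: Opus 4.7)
The plan is to assemble the result by verifying the hypotheses of Theorem \ref{covering} for the family of cellular resolutions of $I_{\preceq b}^m$, and then to invoke it directly. The two hypotheses needed are noetherianity of the representation category and the covering condition on the supporting cell complexes; the second is already handed to us by Proposition \ref{boundcover}, so the real content of the argument is extracting the linearity structure and then packaging everything together.

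First I would set up the family $\F_{I_{\preceq b}}$ whose $m$-th object is the cellular resolution of $S/I_{\preceq b}^m$ supported on $X_{n,\preceq mb}^{md}$, with morphisms between consecutive powers given by multiplications by the generators of $I_{\preceq b}$ (choosing a single compatible pair for each such chain map). By construction these generators are finite in number, so the sets $\mathrm{Hom}(F_m, F_{m+1})$ are finite, and any morphism $F_m \to F_{m+k}$ arises from multiplication by a degree-$kd$ monomial in $I_{\preceq b}^k$, which factors (non-uniquely) as a product of $k$ degree-$d$ generators of $I_{\preceq b}$. This factorisation gives exactly the decomposition required by Definition \ref{linearfam}, so $\F_{I_{\preceq b}}$ is linear with finitely generated Hom sets. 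Proposition \ref{noethrep} then yields that $\mathrm{Rep}_S(\F_{I_{\preceq b}})$ is noetherian.

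Next I would quote Proposition \ref{boundcover}, which gives that the cell complex $X_{n,\preceq mb}^{md}$ supporting the $m$-th resolution is covered by the images of lower cell complexes $X_{n,\preceq kb}^{kd}$, $k<m$, once $m$ is large enough. Combined with noetherianity, both hypotheses of Theorem \ref{covering} are satisfied, and applying it gives that the syzygy representation $\sigma_p$ is finitely generated for every $p$, which is exactly the claim.

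The only step requiring any care is verifying linearity, in particular that every morphism of the family can be decomposed through consecutive morphisms as required by Definition \ref{linearfam}; the rest of the argument is a direct bookkeeping of the two cited results. Note that this decomposition is not unique (as already remarked after the definition of linearity), which is fine: Definition \ref{linearfam} only requires the existence of some such factorisation. Since all candidate decompositions use generators of $I_{\preceq b}$, and these are precisely the morphisms we allowed between consecutive powers, no additional verification is needed.
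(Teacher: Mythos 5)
Your proposal is correct and follows exactly the route the paper takes: the paper states the corollary after observing that the resolutions of $I_{\preceq b}^m$ form a linear family (hence $\mathrm{Rep}_S(\F)$ is noetherian by Proposition \ref{noethrep}) and combining Proposition \ref{boundcover} with Theorem \ref{covering}. Your write-up merely makes the linearity verification explicit, which the paper leaves implicit.
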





We want to consider connected equigenerated ideal $I$  such that they have a cellular resolutions supported on the cell complex coming from $X_n^d$ by removing all vertices with a label not in $I$ and higher dimensional cells containing those. Denote this cell complex by $X_I^d$.

\begin{proposition}
If $I$ is an equigenerated ideal in $n$ variables and degree $d$ such that I has a cellular resolution supported on $X_I^d$ and  the powers of $I$ are supported on $X_{I^m}^{md}$, then the family of cellular resolutions given by them has finitely generated syzygies. 
\end{proposition}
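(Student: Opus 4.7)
The plan is to follow the same three-step template already used for the maximal ideals in Section~\ref{maxi} and for cube ideals in Section~\ref{cube}: establish linearity, deduce noetherianity of the representation category, and then produce a covering of the cell complexes from some index onward so that Theorem~\ref{covering} applies.

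First, I would define the morphisms between consecutive members of the family. The only candidates for cellular morphisms $F_{I^m}\to F_{I^{m+1}}$ whose label-map is a multiplication by a fixed monomial are multiplications by generators of $I$, since any such monomial must lie in $I$ (so that labels in $I^m$ land in $I^{m+1}$) and be of degree exactly $d$ (so that $X_{I^m}^{md}$ embeds into $X_{I^{m+1}}^{(m+1)d}$ as a subcomplex respecting the subdivision structure inherited from $X_n^{(m+1)d}$). I take one compatible morphism for each generator. Any composite map then corresponds to multiplication by a product of generators, and since such a product always admits a factorisation into generators one-at-a-time, the family is linear in the sense of Definition~\ref{linearfam}. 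The Hom sets between consecutive resolutions are finite (one morphism per generator of $I$), so Proposition~\ref{noethrep} gives that $\mathrm{Rep}_S(\F)$ is noetherian.

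The core of the proof, and the main obstacle, is establishing the covering condition: for $m$ large enough, every cell of $X_{I^{m+1}}^{(m+1)d}$ lies in the image of some $X_{I^m}^{md}$ under a generator-multiplication. Here I would argue in two stages, mirroring the strategy of Propositions~\ref{maxcover} and~\ref{boundcover}. First, every vertex label of $X_{I^{m+1}}^{(m+1)d}$ is a monomial in $I^{m+1}$ and hence factors as $g\cdot h$ with $g$ a generator of $I$ and $h\in I^m$; this already handles $0$-cells. Second, a higher-dimensional cell $\sigma\subset X_{I^{m+1}}^{(m+1)d}$ is a cell of $X_n^{(m+1)d}$ all of whose vertex labels live in $I^{m+1}$; the cells of $X_n^{p}$ are small simplices whose vertex labels pairwise differ only by $\pm 1$ in the exponents, so a common factor $g\in I$ can be extracted from all vertices of $\sigma$ provided $m$ is large enough that each quotient $\mathrm{label}/g$ is forced to lie in $I^m$. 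The delicate point is showing that such a $g$ always exists: the low-subdivision case fails exactly as in Proposition~\ref{maxcover}, but for $m$ exceeding a threshold depending on $I$ and $d$ (essentially the point at which interior vertices of $X_n^{(m+1)d}$ all have $I^{m+1}$-labels) an arithmetic argument in the spirit of the AM--GM calculation in the proof of Proposition~\ref{boundcover} forces the required factorisation.

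Once the covering is established, the conclusion is immediate: Theorem~\ref{covering} applied to the linear family $\F$ with noetherian representation category and a covering of $X_{I^m}^{md}$ by finitely many $X_{I^k}^{kd}$ with $k<m$ (for all sufficiently large $m$) yields that $\sigma_p$ is finitely generated for every $p$. I expect the verification of covering for general connected equigenerated $I$ (beyond the bounded case of Corollary~\ref{boundfg} and the cube case) to be the substantive work, since the shape of $X_I^d$ inside $X_n^d$ can be irregular and one needs to rule out the existence of ``boundary'' cells of $X_{I^{m+1}}^{(m+1)d}$ that cannot be reached from any $X_{I^m}^{md}$ under a single generator multiplication.
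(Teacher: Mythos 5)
Your overall strategy (linearity, noetherianity via Proposition \ref{noethrep}, covering, then Theorem \ref{covering}) is not the route the paper takes, and the difference matters: the step you yourself flag as ``the substantive work'' --- the covering of $X_{I^{m+1}}^{(m+1)d}$ by generator-multiplied copies of $X_{I^m}^{md}$ --- is never actually established in your proposal, and it is exactly the difficulty the paper's proof is engineered to avoid. Your argument for higher-dimensional cells requires a single generator $g\in I$ that divides every vertex label of a given cell $\sigma$ of $X_{I^{m+1}}^{(m+1)d}$, with every quotient lying in $I^m$ and the quotient cell landing inside $X_{I^m}^{md}$. The vertex-by-vertex factorisation $g\cdot h$ gives no control over whether the \emph{same} $g$ works for all vertices of $\sigma$, and neither the AM--GM computation of Proposition \ref{boundcover} (which is used to exclude extraneous vertices from $X_{n,\preceq mb}^{md}$, not to produce a covering) nor Proposition \ref{maxcover} (which is specific to the full subdivided simplex) supplies this. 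As written, the covering claim is an unproved assertion about general equigenerated ideals, so the proof is incomplete at its central step.

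The paper sidesteps covering of the $I$-complexes entirely. Since $X_{I^m}^{md}$ is by hypothesis a subcomplex of the subdivided simplex, the $d$-th free module $(F_I^m)_d$ of the resolution of $S/I^m$ is a direct summand of the corresponding free module in the resolution of the power of the maximal ideal (this is the block-matrix decomposition displayed just before the proposition). One therefore defines a representation $\Phi^I_d:\maxfam\rightarrow\smod$ sending $F_i$ to $(F_I^i)_d$, observes that it is a subfunctor of the finitely generated representation $s_d$ of $\maxfam$ (finite generation of $s_d$ coming from the covering already proved for the maximal ideal in Proposition \ref{maxcover}), and that the syzygy functor $\phi^I_d$ is in turn a subfunctor of $\Phi^I_d$. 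Noetherianity of $\mathrm{Rep}_S(\maxfam)$ then gives finite generation of $\phi^I_d$ with no covering statement about the $X_{I^m}^{md}$ required. If you want to salvage your direct approach you would need to actually prove the covering lemma for the class of ideals in the hypothesis; for the bounded ideals $I_{\preceq b}$ this is done in Proposition \ref{boundcover} and Corollary \ref{boundfg}, but in the generality of this proposition it is the subfunctor argument that closes the proof.
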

\begin{proof}
We will show that the syzygies are finitely generated using subfunctors of $s_d$.
Let $(F_I^i)_d$ denote the $d$-th free module in the $i$-th power. 

Let $\maxfam$ denote the family of cellular resolutions of the powers of the maximal ideal and let $F_i$ denote the resolution of the $i$-th power of the maximal ideal.
First consider a representation of $\maxfam$ defined as follows
$$\Phi^I_d: \maxfam \rightarrow \smod$$
sending $F_i$ to $(F_I^i)_d$.
Now we have that $\Phi^I_d(F_i)$ is a direct summand of $s_d(F_i)$, so $\Phi^I_d$ is a subfunctor of $s_p$.

Moreover we can considered the representation$\phi^I_d: \maxfam \rightarrow \smod$ sending $F_i$ to the $d$-th syzygy module of the resolution of $S/I^i$. Since the syzygy module is a submodule of the $(F_I^i)_d$ we have that $\phi^I_d$ is a subfunctor of $\Phi^I_d$, and so it is a subfunctor of $s_d$ as well. 
Then by noetherianity we know that $\phi^I_d$ is finitely generated for any $d$ as $s_d$ is finitely generated for any $d$. Thus we get that the family consisting of powers fo $I$ has finitely generated syzygies.  
\end{proof}






\section{Edge ideals}
In this section we focus on the cellular resolutions that are coming from edge ideals. Through out this section let $G$ denote a simple graph with $n$ vertices and $m$ edges unless otherwise specified. If $v_1$ and $v_2$ are vertices then we denote the edge between them by $v_1v_2$ if it exists. We denote the set of edges with $E(G)$ and the set of vertices with $V(G)$.
\begin{definition}
Let $G$ be a simple graph with vertices numbered by $v_i$, $1\leq i\leq n$. The \emph{edge ideal} associated to $G$ is a monomial ideal in the polynomial ring $S=k[x_1,x_2,\ldots,x_n]$ defined as
$$I_G=(x_ix_j| v_iv_j \in E(G)).$$
\end{definition}
If we consider edge ideals coming from multiple graphs with numbers of vertices $n_1,n_2,\ldots,n_r$ then we take the polynomial ring with $\max\{n_1,n_2,\ldots,n_r\} $ variables.

The possible maps between cellular resolutions coming from edge ideals are particularly cut down by the condition that the first component of the chain map $f_0$ has to be a $1\times1$ matrix.  As noted before this leaves only multiplication as an option and is also describing what the cellular map does on the labels. The degree of the generators is always 2 for edge ideals, hence we cannot have multiplication with any element of $S$ that has degree greater 0. Thus we get that $f_0$ has to be identity map and on the level of vertices the cellular map is an embedding. 

\begin{example}
\label{exgraph}
Let $S=k[x,y,z,w]$ be a graded polynomial ring.
Consider the edge ideals $I_{G_1}=(xy,yz,zw)$ and $I_{G_2}=(xy,xw,yz,zw)$ coming from the graphs $G_1$ and $G_2$ in the Figure \ref{graphex}.

\begin{figure}
\begin{center}
\begin{tikzpicture}
\draw[black](0,2)--(2,2)--(2,0)--(0,0);
\draw[black](3,2)--(5,2)--(5,0)--(3,0)--cycle;

\filldraw [black](0,2) circle (2pt) node[anchor=south] {$1$};
\filldraw [black](2,2) circle (2pt) node[anchor=south] {$2$};
\filldraw [black](2,0) circle (2pt) node[anchor=north] {$3$};
\filldraw [black](0,0) circle (2pt) node[anchor=north] {$4$};
\filldraw [black](3,2) circle (2pt) node[anchor=south] {$1$};
\filldraw [black](5,2) circle (2pt) node[anchor=south] {$2$};
\filldraw [black](5,0) circle (2pt) node[anchor=north] {$3$};
\filldraw [black](3,0) circle (2pt) node[anchor=north] {$4$};
\end{tikzpicture}
\end{center}
\caption{Graphs $G_1$ and $G_2$ of Example \ref{exgraph}.}
\label{graphex}
\end{figure}
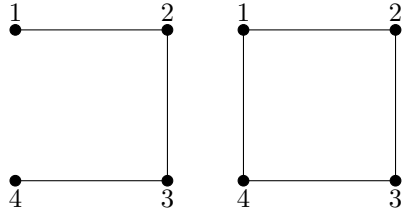

The resolutions of $S/I_{G_1}$ and $S/I_{G_2}$ are
$$ S\xleftarrow{\left[ \begin{array}{ccc} 
xy& yz& zw
\end{array}\right]} S^3\xleftarrow{\left[\begin{array}{cc}
-z&0\\
x&-w\\
0&y
\end{array}\right]} S^2\leftarrow 0$$
and
$$S\xleftarrow{\left[ \begin{array}{cccc} 
xy&xw& yz& zw
\end{array}\right]} S^4\xleftarrow{\left[\begin{array}{cccc}
-w&-z&0&0\\
y&0&-z&0\\
0&x&0&-w\\
0&0&x&y
\end{array}\right]}^4\xleftarrow{\left[\begin{array}{c}
z\\ -w\\ y\\ -x
\end{array}\right]} S\leftarrow 0$$
Both of these resolutions are cellular, with cell complexes $X_1$ and $X_2$ respectively. These cell complexes are shown in the Figure \ref{gcell}. A possible cellular resolution map between the two is given by taking a cellular map $g$ embedding the line into the square with $\varphi_g=\operatorname{id}$, and taking a chain map with $f_0=\operatorname{id}$, $f_1=\left[\begin{array}{ccc}
    1 &0&0  \\
    0 &0&0\\
    0&1&0\\
    0&0&1
\end{array}\right]$, $f_2=\left[\begin{array}{cc}
     0&0  \\
     1& 0\\
     0&0\\
     0&1
\end{array}\right]$, and $f_3=0$. This then forms a pair of compatible maps that give us the cellular resolution morphism. This is the only possible map between the two resolution (not counting the possible choices for the cellular map).

\begin{figure}
\begin{center}
\begin{tikzpicture}
\draw[color=red,  thick](0.5,0)--(2,1)--(0.5,2);
\filldraw[color=red, fill=red!5,  thick](3,2)--(5,2)--(5,0)--(3,0)--cycle;

\filldraw [black](0.5,0) circle (2pt) node[anchor=north] {$x_3x_4$};
\filldraw [black](2,1) circle (2pt) node[anchor=north] {$x_2x_3$};
\filldraw [black](0.5,2) circle (2pt) node[anchor=south] {$x_1x_2$};
\filldraw [black](3,2) circle (2pt) node[anchor=south] {$x_1x_2$};
\filldraw [black](5,2) circle (2pt) node[anchor=south] {$x_2x_3$};
\filldraw [black](5,0) circle (2pt) node[anchor=north] {$x_3x_4$};
\filldraw [black](3,0) circle (2pt) node[anchor=north] {$x_1x_4$};
\end{tikzpicture}
\end{center}
\caption{Cell complexes supporting the resolutions of $S/I_{G_1}$ and $S/I_{G_2}$.}
\label{gcell}
\end{figure}
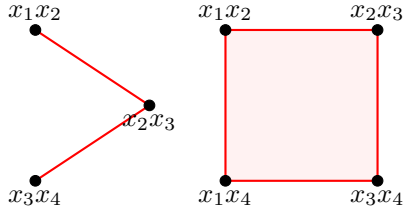
\end{example}

\subsection{Powers of edge ideals}
\subsubsection{Edge ideals of paths}
\label{paths}
One of the possible things to look at with edge ideals is of course their powers. In \cite{E10} Engstr\"om and Noren showed that the powers of edge ideals of paths have minimal cellular resolutions and gave an explicit description of a cellular resolution that is close to minimal.

A \emph{n-path} is a graph with $n$ vertices and edges 12,23,$\ldots,(n-1)n$, and it is denoted with $\pathn$.
We denote the edge ideal of path $\pathn$ with $\pathidealn$.
A \emph{Newton polytope} of a monomial ideal $I$ is a polytope $\operatorname{Newt}(I)$ given by the pan of the exponent vectors of the generators of $I$.
 
\begin{definition}[\cite{E10}, Def 3.1]
Let $\operatorname{Newt}(\pathidealn^d)$ be the Newton polytope of the ideal $\pathidealn^d$, and define the hyperplanes
$$H'_{i,j}=\left\{{\bf y}\in\mathbb{R}^n|\sum^{\lfloor (i-1)/2\rfloor}_{k=0}y_{i-2k}=j\right\}$$
and
$$H_{i,j}=\left\{{\bf y}\in\mathbb{R}^n|y_i=j\right\}$$
for $0<i\leq n$ and $0\leq j$. The the subdivision of $\operatorname{Newt}(\pathidealn^d)$ by all $H_{i,j}$ is called $Y_n^d$ and the subdivision by all $H_{i,j}$ and $H'_{i,j}$ is called $Z_n^d$.

\end{definition}

\begin{proposition}[\cite{E10}, Thm 5.6 ]
The cell complexes $Y^d_n$ and $Z_n^d$ both support a cellular resolution of $I^d_{P_n}$.
\end{proposition}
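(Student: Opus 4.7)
The plan is to apply Proposition~\ref{bound}: it suffices to show that for every $\mathbf{b}\in\mathbb{N}^n$ both subcomplexes $(Y_n^d)_{\preceq\mathbf{b}}$ and $(Z_n^d)_{\preceq\mathbf{b}}$ are acyclic over $k$. Both $Y_n^d$ and $Z_n^d$ are polyhedral subdivisions of the convex polytope $\operatorname{Newt}(\pathidealn^d)$, and by the lcm-labelling the exponent vector of any cell is the coordinate-wise maximum of the exponent vectors of its vertices. Thus a cell $F$ belongs to $(Y_n^d)_{\preceq\mathbf{b}}$ precisely when every vertex of $F$ lies in the box $B_{\mathbf{b}}:=\prod_{i=1}^n[0,b_i]$, and by convexity of each polytopal cell this is equivalent to $F\subseteq B_{\mathbf{b}}$.

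Next I would use that for each $i$ the hyperplane $H_{i,b_i}$ is among the defining hyperplanes of $Y_n^d$ (and of $Z_n^d$), by the definition with $j\geq 0$. This means no cell crosses the boundary of $B_{\mathbf{b}}$, so the subcomplex of cells contained in $B_{\mathbf{b}}$ is a bona fide polyhedral subdivision of the region $R_{\mathbf{b}}:=B_{\mathbf{b}}\cap\operatorname{Newt}(\pathidealn^d)$. As the intersection of two convex sets, $R_{\mathbf{b}}$ is convex and hence contractible, so $(Y_n^d)_{\preceq\mathbf{b}}$ has trivial reduced homology.

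For $Z_n^d$ the argument is formally identical: the zigzag hyperplanes $H'_{i,j}$ merely refine the decomposition inside $\operatorname{Newt}(\pathidealn^d)$ without interfering with the coordinate cuts at the boundary of $B_{\mathbf{b}}$. Hence $(Z_n^d)_{\preceq\mathbf{b}}$ is again a subdivision of the contractible region $R_{\mathbf{b}}$, and is therefore acyclic. Applying Proposition~\ref{bound} to each complex then yields the desired cellular resolutions of $\pathidealn^d$.

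The main obstacle is verifying the premise used implicitly above, namely that the $0$-skeleton of $Y_n^d$ and $Z_n^d$ consists exactly of the exponent vectors of the minimal generators of $\pathidealn^d$, so that the labelling is a labelling by $\pathidealn^d$ in the sense of a cellular resolution. A priori the cuts by $H_{i,j}$ and $H'_{i,j}$ could create lattice points in $\operatorname{Newt}(\pathidealn^d)$ that are not of the form $\prod_{k=1}^d x_{i_k}x_{i_k+1}$ for some edges $i_ki_k{+}1$ of $\pathn$. This is a combinatorial fact tied to the specific zigzag structure of the generators of $\pathidealn$, and the roles of the hyperplanes $H'_{i,j}$ are precisely tailored to this; once this is in hand, the acyclicity argument above is essentially a convexity statement.
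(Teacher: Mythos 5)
This proposition is not proved in the paper at all: it is quoted from Engstr\"om and Nor\'en (\cite{E10}), so there is no internal proof to compare against. Judged on its own terms, your acyclicity argument is sound and is the standard route: since every hyperplane $H_{i,b_i}$ with $b_i\in\mathbb{N}$ occurs among the cuts defining $Y_n^d$ and $Z_n^d$, each closed cell lies in an integer unit box in every coordinate, hence is either contained in $B_{\mathbf{b}}$ or meets its complement's interior; consequently the cells contained in $B_{\mathbf{b}}$ have union exactly $\operatorname{Newt}(I_{P_n}^d)\cap B_{\mathbf{b}}$, which is convex, and Proposition \ref{bound} applies. This is also the style of argument the paper itself uses for the analogous statements about $X_n^k$, $C_n^p$ and $\overline{Z_n^d}$.

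The step you defer in your last paragraph is, however, not a routine premise but the substantive content of the cited theorem, so the proposal has a genuine gap. For $\mathcal{F}_{Y_n^d}$ (and $\mathcal{F}_{Z_n^d}$) to be a resolution of $S/I_{P_n}^d$ one must know (i) that every $0$-cell of the subdivision is a lattice point whose monomial lies in $I_{P_n}^d$, and that these vertex labels generate $I_{P_n}^d$ --- otherwise the complex resolves a different module, and indeed the labelling by monomials is not even defined at a non-lattice vertex --- and (ii) that the additional zigzag hyperplanes $H'_{i,j}$ used for $Z_n^d$ create no new vertices beyond those of $Y_n^d$. Neither is automatic: a generic arrangement of hyperplanes slicing a polytope produces many new, typically non-lattice, vertices, and it is precisely the combinatorics of the generators of $I_{P_n}$ (products of $d$ edges of the path) interacting with the planes $H_{i,j}$ and $H'_{i,j}$ that rules this out. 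This is where the real work in \cite{E10} lies, and it is the same point the present paper silently relies on later when asserting that $\overline{Z_n^d}$ ``adds no new vertices to $Z_n^d$.'' As written, your argument establishes only the conditional statement: \emph{if} the subdivisions have the correct vertex set, \emph{then} they support cellular resolutions of $I_{P_n}^d$.
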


Now we can study the powers of the edge ideals of chains as a family. One way to define this family is to take cellular resolutions in our family are taken to be the ones supported on $Y^d_n$ and morphism are monomial multiplications, with ones between consecutive powers given by $I_{P_n}$. As with the families in earlier sections we aim to show that the syzygy functor for this family is finitely generated. For this purpose we want to make use of the observation that $Z^d_n$ and $Y_n^d$ are $X_n^d$ with some cells deleted and cut by few extra hyperplanes. Define the following cell complex that is a subdivision of $Z^d_n$.



\begin{definition}
Let $\overline{Z^d_n}$ be subdivision of $Z^d_n$ given by the hyperplanes
$$\overline{H}_{i,j}=\left\{{\bf y}\in\mathbb{R}^n|\sum^{\lfloor (i-1)/3\rfloor}_{k=0}y_{i-3k}=j\right\},$$
for $0<i\leq n$ and $0\leq j$.
\end{definition}

\begin{proposition}
The cell complex  $\overline{Z^d_n}$ supports a cellular resolution of $I^d_{P_n}$.
\end{proposition}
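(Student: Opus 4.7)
My plan is to verify the hypothesis of Proposition \ref{bound}: that for every $\mathbf{b}\in\mathbb{N}^n$ the subcomplex $(\overline{Z^d_n})_{\preceq \mathbf{b}}$ is acyclic over $k$. Since $Z^d_n$ is already known to support a cellular resolution of $I^d_{P_n}$ by \cite{E10}, the strategy is to reduce acyclicity of $(\overline{Z^d_n})_{\preceq \mathbf{b}}$ to that of $(Z^d_n)_{\preceq \mathbf{b}}$ by comparing their underlying topological spaces.

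First I would describe the labelling explicitly. Each vertex of $\overline{Z^d_n}$ lies at a lattice point of $\operatorname{Newt}(I^d_{P_n})$ and carries the monomial $\mathbf{x}^{\mathbf{v}}$ corresponding to its coordinate vector $\mathbf{v}$; each higher-dimensional face is labelled by the lcm of its vertex labels. This is compatible with the coarser labelling on $Z^d_n$ because $\overline{Z^d_n}$ is obtained from $Z^d_n$ by cutting with additional affine hyperplanes $\overline{H}_{i,j}$, so its cells are again convex polytopes and it is a polyhedral, hence regular CW, refinement of $Z^d_n$ on the same ambient space $\operatorname{Newt}(I^d_{P_n})$.

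The key observation I would then exploit is that a face $F$ of $\overline{Z^d_n}$ has label $\preceq \mathbf{b}$ precisely when every vertex $\mathbf{v}$ of $F$ satisfies $\mathbf{v}\preceq \mathbf{b}$ componentwise, which in turn holds iff $F$ is contained in the box $B_{\mathbf{b}}=[0,b_1]\times\cdots\times[0,b_n]$. Because the bounding hyperplanes of $B_{\mathbf{b}}$ are $H_{i,0}$ and $H_{i,b_i}$, which occur among the defining hyperplanes of both $Z^d_n$ and $\overline{Z^d_n}$, the union of such cells equals $B_{\mathbf{b}}\cap \operatorname{Newt}(I^d_{P_n})$ in either subdivision. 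Hence $(Z^d_n)_{\preceq \mathbf{b}}$ and $(\overline{Z^d_n})_{\preceq \mathbf{b}}$ are two CW-structures on exactly the same topological space, so they share the same reduced homology with $k$-coefficients. Since the first is acyclic by \cite{E10}, so is the second, and Proposition \ref{bound} then delivers the conclusion.

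I do not expect any serious obstacle here: the argument is essentially that subdivision preserves topology, and hence homology, of the bounded subcomplexes. The only genuine point worth care is checking that $B_{\mathbf{b}}$ is already a union of closed cells of $\overline{Z^d_n}$, which is where it matters that the axis-aligned hyperplanes $H_{i,j}$ are included among the defining hyperplanes of $\overline{Z^d_n}$ (and not merely of $Z^d_n$). Without this one would have to worry about refined cells of $\overline{Z^d_n}$ straddling the boundary of $B_{\mathbf{b}}$, which could enlarge the underlying space of the subcomplex and potentially break the comparison.
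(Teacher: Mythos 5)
Your argument is correct, and it takes a genuinely different (though closely related) route from the paper's. The paper argues directly: it asserts that $\overline{Z^d_n}$ introduces no new vertices beyond those of $Z^d_n$, and that for any $\mathbf{b}$ the bounded subcomplex $(\overline{Z^d_n})_{\preceq\mathbf{b}}$ is either empty or has convex underlying space, hence is contractible and acyclic, so Proposition \ref{bound} applies. You instead identify the underlying space of $(\overline{Z^d_n})_{\preceq\mathbf{b}}$ with that of $(Z^d_n)_{\preceq\mathbf{b}}$ --- both equal $\operatorname{Newt}(\pathidealn^d)\cap[0,b_1]\times\cdots\times[0,b_n]$ because the axis-aligned hyperplanes $H_{i,j}$ belong to both arrangements, so no cell of either subdivision straddles the boundary of the box --- and then invoke the known acyclicity of the latter from \cite{E10} together with invariance of homology under change of CW-structure on a fixed space. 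Your approach buys robustness: it needs only that the refinement is by affine hyperplanes containing the $H_{i,j}$ and never has to observe that the resulting space is convex, so it would survive replacing $\overline{H}_{i,j}$ by any other affine cuts. The paper's convexity observation is shorter and self-contained (it does not lean on the acyclicity statement for $Z^d_n$), and in fact your argument silently reproves it, since the identification of the underlying space with a box-intersection is exactly what makes the paper's convexity claim true. One point you share with the paper rather than improve on: both arguments take for granted that every vertex of $\overline{Z^d_n}$ is a lattice point labelled by its own exponent vector (the paper phrases this as ``adds no new vertices''); your reduction needs this to equate ``label $\preceq\mathbf{b}$'' with ``contained in the box'' for faces of the finer complex, so it is worth flagging that this is an unverified geometric assertion about the arrangement $\overline{H}_{i,j}$ in both treatments.
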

\begin{proof}
The subdivision $\overline{Z^d_n}$ adds no new vertices to $Z^d_n$.
Moreover, the labels of the cell complex $\overline{Z^d_n}$ come from lattice points and bounding by some vector  {\bf b} will give us either convex subcomplex or an empty subcomplex. Thus it will contractible and acyclic and giving us that it supports the resolution by Proposition \ref{bound}.
\end{proof}

\begin{proposition}
For  a fixed $n$ the family of cellular resolutions of powers of path ideals $P_n$ has finitely generated syzygies. 
\end{proposition}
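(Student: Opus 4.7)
The plan is to apply Theorem \ref{covering} to the family $\F$ of cellular resolutions of $I_{P_n}^d$ supported on $\overline{Z_n^d}$, with morphisms taken to be monomial multiplications (one per chain map) and consecutive morphisms given by multiplication by generators of $I_{P_n}$. There are two hypotheses to check: noetherianity of $\mathrm{Rep}_S(\F)$, and the covering property of the supporting cell complexes.

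First I would verify that $\F$ is a linear family with finitely generated Hom sets, so that Proposition \ref{noethrep} applies. Between $F_i$ and $F_{i+1}$ the only possible morphisms are multiplications by the generators $x_1x_2,\ldots,x_{n-1}x_n$ of $I_{P_n}$ (since any $f_0$ compatible with a cellular map must be multiplication by a generator of $I_{P_n}$ by the degree and label-map requirements), giving $n-1$ morphisms between consecutive powers. Any morphism $F_i \to F_{i+k}$ corresponds to multiplication by a monomial in $I_{P_n}^k$, which factors as a product of $k$ generators of $I_{P_n}$; thus linearity holds. All Hom sets are finite, so by Proposition \ref{noethrep} we obtain $\mathrm{Rep}_S(\F)$ noetherian.

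The main work is the covering condition. For each generator $x_ix_{i+1}$ of $I_{P_n}$, multiplication by it induces an embedding $\overline{Z_n^{d-1}} \hookrightarrow \overline{Z_n^d}$ sending a cell with label $m$ to the cell with label $x_ix_{i+1}m$. I would show that, for $d$ large enough, every cell of $\overline{Z_n^d}$ lies in the image of at least one of these $n-1$ embeddings. The strategy mirrors the argument for powers of the maximal ideal in Proposition \ref{maxcover}: a cell of $\overline{Z_n^d}$ fails to be covered by the embedding corresponding to $x_ix_{i+1}$ precisely when none of its vertex labels are divisible by $x_ix_{i+1}$. This failure can be re-expressed in terms of lying on one side of the hyperplanes $H_{i,1}$, $H_{i+1,1}$, $H'_{\cdot,\cdot}$, and $\overline{H}_{\cdot,\cdot}$ used in the subdivision. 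For $d$ large enough, every lattice point label in $\overline{Z_n^d}$ is divisible by some $x_ix_{i+1}$ (since the exponent vector has total degree $2d$ spread over $n$ variables with the constraint imposed by being a product of path-edge generators), and the extra cuts in the $\overline{Z_n^d}$ subdivision were designed exactly so that each maximal cell sits inside the region multiplied by some $x_ix_{i+1}$. This is the step I expect to be the technical heart of the argument: one must check combinatorially that the collection of subregions $x_ix_{i+1}\cdot \overline{Z_n^{d-1}}$, $1\leq i\leq n-1$, exhausts $\overline{Z_n^d}$ once $d$ exceeds a bound depending only on $n$.

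Once the covering property is established for all $d$ large enough, Theorem \ref{covering} immediately yields that $\sigma_p$ is finitely generated for every $p$, completing the proof. A useful sanity check along the way is that the overlap pattern of the $n-1$ embeddings mirrors the edge structure of $P_n$ itself: two embeddings $x_ix_{i+1}$ and $x_jx_{j+1}$ overlap nontrivially in a translate of $\overline{Z_n^{d-2}}$ shifted by $\mathrm{lcm}(x_ix_{i+1}, x_jx_{j+1})$, which is consistent only when $|i-j|\leq 1$; this combinatorial bookkeeping is what makes the covering argument depend on the subdivision $\overline{Z_n^d}$ rather than on $Y_n^d$ or $Z_n^d$ directly.
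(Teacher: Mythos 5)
Your top-level architecture coincides with the paper's: linearity plus finite Hom sets gives noetherianity of $\mathrm{Rep}_S(\F)$ via Proposition \ref{noethrep}, and Theorem \ref{covering} then reduces everything to the covering of the cell complexes. The first half is fine. The genuine gap is in the second half: the covering property, which you yourself call the technical heart, is never actually established --- you state what would have to be checked (that the $n-1$ subregions $x_ix_{i+1}\cdot\overline{Z_n^{d-1}}$ exhaust $\overline{Z_n^d}$ for $d$ large) and stop there. As written this is a plan, not a proof, of the one hypothesis of Theorem \ref{covering} that does not come for free. There is also a flaw in the criterion you propose for the missing step: a cell lies in the image of the embedding attached to $x_ix_{i+1}$ only if its label divided by $x_ix_{i+1}$ is again a label occurring in $\overline{Z_n^{d-1}}$, and divisibility of the label by $x_ix_{i+1}$ does not guarantee this. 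For instance in $P_4$ the generator $x_1x_2x_3x_4=(x_1x_2)(x_3x_4)$ of $I_{P_4}^2$ is divisible by $x_2x_3$, yet the quotient $x_1x_4$ is not in $I_{P_4}$, so that vertex is not in the image of the $x_2x_3$-embedding. (Your closing ``sanity check'' is also off: for $|i-j|\ge 2$ the generators $x_ix_{i+1}$ and $x_jx_{j+1}$ are coprime and the two images do overlap, in a degree-consistent translate of $\overline{Z_n^{d-2}}$ by their product; it is the adjacent case $|i-j|=1$, where the lcm has degree $3$, in which your description breaks down.)

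For comparison, the paper closes this gap by a reduction rather than a direct combinatorial exhaustion: it observes that $\overline{Z_n^d}$ is a refinement of (a subcomplex of) the subdivided simplex supporting the corresponding power of the maximal ideal, and that the edge-multiplication maps restrict the variable-multiplication maps already analysed for that family, so the covering is inherited from Proposition \ref{maxcover}. That argument is itself stated very tersely in the paper, so a self-contained direct proof along the lines you sketch would be a reasonable alternative --- but the exhaustion must actually be carried out, and with the corrected membership criterion above.
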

\begin{proof}
The cell complexes are refinement of the cell complex we encountered for the maximal ideals. We know these are covered eventually, thus we get covering for the complexes since we have only four cellular maps that behave the same to the cellular maps with maximal ideals.

The powers form a linear family which follows from the possible multiplication maps. Then we have noetherianity of the representations and get that the syzygy functor is finitely generated as we have covering. 
\end{proof}

\subsubsection{The complete graph}
In this section we want to look at the powers of the edge ideals of the complete graph $K_n$ on $n$ vertices. We want to look at the resolutions of the powers of edge ideals of complete graphs. 

The complete graph on $n$ vertices is a graph that contains an edge between any two vertices. This then implies that the edge ideal of the graph $K_n$, denoted by $I_{K_n}$, is given by all pairs $x_ix_j$ with $1\leq i<j\leq n$. These monomials are precisely all the degree 2 monomials of $\m_n^2$ that are bounded by the vector $(1,1,\ldots,1)$. This allows us to apply the earlier results on the equigenerated ideals bounded by some vector.

\begin{proposition}
Let $\F$ be family of cellular resolutions coming from the powers of $I_{K_n}$, then the syzygies of the family are finitely generated.
\end{proposition}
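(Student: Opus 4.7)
The plan is to recognize $I_{K_n}$ as a special case of the bounded equigenerated ideals $I_{\preceq b}$ studied just before, and then apply Corollary \ref{boundfg} directly.

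First I would unpack the generators. The edge ideal $I_{K_n}$ is generated by all products $x_ix_j$ with $1\leq i<j\leq n$, which are precisely the degree-$2$ monomials in $S=k[x_1,\ldots,x_n]$ whose exponent vectors $a=(a_1,\ldots,a_n)$ satisfy $a_i\leq 1$ for every $i$. In the notation of the previous subsection this identifies
\[
I_{K_n} \;=\; I_{\preceq(1,1,\ldots,1)}
\]
as a subideal of $\m_n^2$, so that $I_{K_n}^m = I_{\preceq b}^m$ for the bound vector $b=(1,1,\ldots,1)$ and degree $d=2$.

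Next I would verify that the family $\F$ falls within the framework of Corollary \ref{boundfg}. By Proposition \ref{boundcover}, the cellular resolution of $I_{K_n}^m$ is supported on the cell complex $X_{n,\preceq mb}^{2m}$, and these cell complexes admit coverings by a finite collection of earlier members of the family for $m$ large enough. The morphisms in $\F$ are multiplications by generators of $I_{K_n}$, making $\F$ a linear family with finite Hom-sets, so by Proposition \ref{noethrep} the representation category $\mathrm{Rep}_S(\F)$ is noetherian.

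Finally, the conclusion follows by invoking Corollary \ref{boundfg} (equivalently, combining the covering statement of Proposition \ref{boundcover} with Theorem \ref{covering}). I do not anticipate a real obstacle here: the only thing to check is that the bound vector $b=(1,\ldots,1)$ gives the correct set of generators, which is immediate, and that the hypotheses on the Newton-polytope construction in Proposition \ref{boundcover} are met, which is automatic. Thus the syzygy representation $\sigma_t$ of $\F$ is finitely generated for every $t$.
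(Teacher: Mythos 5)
Your proposal matches the paper's argument exactly: the paper also identifies $I_{K_n}$ with the bounded equigenerated ideal given by ${\bf b}=(1,1,\ldots,1)$ and $d=2$ and then cites Corollary \ref{boundfg}. Your additional verification of the hypotheses (linearity, noetherianity, covering) is a harmless elaboration of the same route.
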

\begin{proof}
This follows directly from the Corollary \ref{boundfg} by setting ${\bf b}=(1,1,\ldots,1)$ and $d=2$.
\end{proof}
\subsection{Booth-Lueker graphs}
\label{blideals}
In this section we want to look at results relating to the edge ideal of the Booth-Lueker graph. In \cite{bs} the formulas for the Betti numbers and other invariants of the resolution were established in terms of the graph's number of edges and vertices. The cellularity of the resolution was not studied in this paper. First we recall the Booth-Lueker edge ideal from \cite{bs} and some definitions from \cite{af}.

For this section let $G$ be a simple graph and let $I_G$ be the edge ideal of this graph. 
\begin{definition}
For any graph G let BL(G) be the graph with vertex set $V (G)\cup E(G)$ and edges uv for every pair of vertices in G and ue for every vertex u incident to an edge e in G. We call BL(G) the Booth–Lueker graph of G.
\end{definition}
Let us denote the edge ideal of the Booth-Lueker graph with $BL(I_G)$.

\begin{definition}[\cite{af}, Def 2.1]
Let $I$ be a monomial ideal. The ideal $I$ has \emph{linear quotients} if there is an ordering of the generators $(m_1,m_2,\ldots,m_k)$ such that the colon ideal $(m_1,m_2,\ldots,m_{j-1}):m_j$ is generated by some subset of variables for each $j$.
\end{definition}
\begin{definition}[\cite{af}, Def 2.2]
Let $I$ be a monomial with linear quotients for some order $(m_1,m_2,\ldots,m_k)$. The $\set$ of a generator is defined to be $\set(m_j)=\{k\in [n]\,|\, x_k\in <m_1,m_2,\ldots,m_{j-1}>:m_j\}$.
\end{definition}
\begin{definition}[\cite{af}, Def 2.4]
Let $I$ be a monomial ideal and let $M(I)$ denote the set of all monomials in $I$ and let $G(I)$ denote the set of generators of $I$. The \emph{decomposition function} of $I$ is a map from $b:M(I)\rightarrow G(I)$ defined as $b(m)=m_j$ where $j$ is the smallest index such that $m\in<m_1,m_2,\ldots,m_j>$.

The decomposition function $b$ is \emph{regular} if for every $m\in G(I)$ and every $t\in \set(m)$ we have  $\set(b(x_tm))\subseteq \set(m)$.
\end{definition}
These definitions then allows us to state the main theorem from \cite{af}.

\begin{theorem}[\cite{af}, Thm 3.10]
\label{afmain}
Suppose that ideal $I$ has linear quotients with respect to some ordering $(m_1,\ldots,m_k)$ of the generators, and suppose that $I$ has a regular decomposition function. Then the minimal resolution of $I$ obtained as an iterated mapping cone is a cellular and supported on a regular CW-complex. 
\end{theorem}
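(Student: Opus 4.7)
The plan is to build the regular CW-complex $X$ explicitly from the linear-quotients data and then show that its labelled cellular chain complex recovers the iterated mapping cone resolution. The natural indexing of cells is the set $\{(j,S) : 1\le j\le k,\ S\subseteq \set(m_j)\}$, where the cell $(j,S)$ will have dimension $|S|$ and carry the monomial label $m_j\cdot\prod_{t\in S}x_t$. I would proceed by induction on $j$, building $X_j$ as a regular CW-complex supporting a cellular resolution of $J_j=(m_1,\ldots,m_j)$, with $X_1$ being a single vertex labelled $m_1$ (since $\set(m_1)=\emptyset$).

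For the inductive step, assume $X_{j-1}$ is a regular CW-complex supporting the resolution of $J_{j-1}$. Linear quotients gives $(J_{j-1}):m_j = (x_t : t\in\set(m_j))$, whose minimal resolution is the Koszul complex, geometrically supported on a simplex $\Delta_j$ on vertex set $\set(m_j)$, with face $(j,S)$ labelled $m_j\prod_{t\in S}x_t$. The iterated mapping cone of the multiplication-by-$m_j$ map from the Koszul complex into the resolution of $S/J_{j-1}$ contributes precisely these simplices as new generators. Geometrically, I would attach $\Delta_j$ to $X_{j-1}$ using the map that sends each boundary face $(j,S\setminus\{t\})$ (with the extra factor $x_t$) to the cell of $X_{j-1}$ corresponding to $b(x_tm_j)$ equipped with the appropriate subset of $\set(b(x_tm_j))$ inherited from $S\setminus\{t\}$. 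The regularity of the decomposition function, $\set(b(x_tm_j))\subseteq \set(m_j)$, is exactly what makes this assignment well-defined and lands inside a subcomplex already constructed.

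The main obstacle is to verify that these attaching maps produce a \emph{regular} CW-structure, i.e.\ that each attaching map is a homeomorphism on the boundary sphere so that cell closures are balls and the face poset is well-behaved. The danger is that distinct boundary faces of $\Delta_j$ could be identified in $X_{j-1}$, collapsing the new cell. Here regularity of $b$ is used a second time: it ensures that the combinatorial target of each boundary face is uniquely determined, and iterating $b$ on any sequence of $x_t$-multiplications always decreases the position $j$, so no cycles occur. The cleanest way to formalize this is to build the abstract face poset $\mathcal{P}$ on the cells $(j,S)$ with order relations induced by the Koszul faces and by the $b$-attachments, and then verify that every proper lower interval $[\hat 0,(j,S))$ in $\mathcal{P}$ is isomorphic to the face poset of a sphere of dimension $|S|-1$. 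This reduces the geometric regularity to a purely combinatorial check and allows invoking Bj\"orner's theorem that CW-posets are realized by unique regular CW-complexes.

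Finally, one matches the labelled cellular differential of $X=X_k$ with the iterated mapping cone differential. On the Koszul part, the cellular boundary of $(j,S)$ with coefficient $\operatorname{sign}(G,F)\,x^{a_F-a_G}$ reproduces the Koszul differential on $\Delta_j$; on the mapping cone part, the same formula applied to the attached face at $b(x_tm_j)$ yields the multiplication-by-$x_t$ term of the cone construction, precisely because $\set(b(x_tm_j))\subseteq\set(m_j)$ forces the labels to match. Combining the two, $X$ supports the iterated mapping cone resolution, which is minimal by the linear-quotients hypothesis, completing the proof.
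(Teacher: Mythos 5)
The paper does not actually prove this statement: it is imported verbatim from Dochtermann and Mohammadi \cite{af} (their Theorem 3.10) and used as a black box in the section on Booth--Lueker graphs, so there is no internal proof to compare your attempt against. Judged on its own merits, your outline does track the strategy of the published argument: induction along the linear-quotients ordering, a Koszul simplex on the vertex set $\set(m_j)$ contributed at step $j$, attaching data read off from the decomposition function $b$, and a reduction of geometric regularity to the combinatorial condition that lower intervals of the face poset are sphere posets (Bj\"orner's characterization of CW-posets). That is the right skeleton.

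There are nevertheless two concrete gaps. First, your attaching map is not well defined as stated. You send the boundary face $(j,S\setminus\{t\})$ to ``the cell of $X_{j-1}$ corresponding to $b(x_tm_j)$ equipped with the appropriate subset of $\set(b(x_tm_j))$ inherited from $S\setminus\{t\}$'', but this presupposes $S\setminus\{t\}\subseteq\set(b(x_tm_j))$. Regularity gives the inclusion $\set(b(x_tm_j))\subseteq\set(m_j)$, which points the other way and does not imply what you need; in the Herzog--Takayama comparison maps the corresponding terms are simply declared to be zero when $S\setminus\{t\}\not\subseteq\set(b(x_tm_j))$. So a priori some boundary faces of your putative $|S|$-cell have no target, the map is not defined on the whole boundary sphere, and one must argue separately that the faces which do survive still assemble into a $(|S|-1)$-ball inside $X_{j-1}$ which, glued to the cone part $\{(j,S'):S'\subsetneq S\}$, closes up into a sphere. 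Second, and relatedly, the verification that every lower interval $[\hat0,(j,S))$ is the face poset of a sphere is announced as the plan but never carried out; that verification, which requires a further induction exploiting the fact that iterating $b$ strictly decreases the index $j$, is the actual content of the theorem. As it stands the proposal is a correct road map with the hardest leg of the journey left untraveled.
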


Before applying the above definitions and theorems we make some observations about the Booth-Lueker ideals.
\begin{proposition}
\label{lqr}
If $I$ is an edge ideal of a simple graph, then the ideal $BL(I)$ of the Booth-Lueker graph has linear quotients and a regular decomposition function.
\end{proposition}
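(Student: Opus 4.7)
The plan is to produce an explicit ordering of the generators of $\lin(I_G)$ and to verify the two conditions by direct computation. Label $V(G)=\{v_1,\dots,v_n\}$ with associated variables $x_1,\dots,x_n$ and $E(G)=\{e_1,\dots,e_m\}$ with associated variables $y_1,\dots,y_m$, and for each edge $e_k$ write its endpoints as $v_{a_k}, v_{b_k}$ with $a_k<b_k$. The generators of $\lin(I_G)$ split into clique generators $x_i x_j$ with $1\le i<j\le n$ (since $V(G)$ forms a clique in $\lin(G)$) and incidence generators $x_{a_k} y_k,\, x_{b_k} y_k$ for $k=1,\dots,m$.

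Order the generators by first listing the clique generators in lexicographic order ($x_1x_2, x_1x_3,\dots,x_{n-1}x_n$) and then listing the incidence generators in the sequence $x_{a_1}y_1, x_{b_1}y_1, x_{a_2}y_2, x_{b_2}y_2,\dots$. For linear quotients, I would compute the colon of the previous generators against each new one. For a clique generator $x_i x_j$ the colon is generated by $\{x_l : l<j,\, l\ne i\}$, since every earlier clique generator $x_k x_l$ either shares a variable with $x_i x_j$ (contributing a single variable to the colon) or factors through one that does. For an incidence generator $x_i y_k$ the colon is generated by $\{x_l : l\ne i\}$ (from the clique generators, which each contribute a single $x$) together with $\{y_l : l<k,\, v_i\in e_l\}$ (from earlier incidence generators sharing the variable $x_i$), plus $y_k$ itself in the case $i=b_k$. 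In every case the colon is generated by variables, giving linear quotients.

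For regularity of the decomposition function, for each generator $m$ and each $t\in\set(m)$ I would identify $b(x_t m)$ explicitly and verify $\set(b(x_t m))\subseteq\set(m)$. Splitting on the type of $m$ and whether $t$ indexes an $x$- or $y$-variable, each sub-case reduces to an elementary comparison: when $m=x_ix_j$ and $t$ is an $x$-index, $b(x_t m)$ is the lex-earliest clique generator dividing $x_tx_ix_j$, and its $\set$ is of the form $\{l<\max, l\ne\min\}$, contained in $\set(m)=\{l<j,\, l\ne i\}$; when $m=x_iy_k$ and $t$ is a $y$-index $n+l\in\set(m)$, then $b(x_tm)=x_iy_l$ with $l<k$, whose $\set$ has the same $x$-part and a strictly shorter $y$-part; when $m=x_iy_k$ and $t$ is an $x$-index, $b(x_tm)$ is a clique generator whose $\set$ consists only of $x$-indices below $n$, which is easily contained.

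The main obstacle is the bookkeeping in the mixed case: when an incidence generator is multiplied by a variable, the resulting monomial can be divisible by several earlier generators of different types, and one must correctly identify which one is selected by $b$ before comparing the associated $\set$s. Once the ordering is fixed this identification is forced and the containment follows from straightforward index inequalities, but laying out the case analysis cleanly is the only real content of the proof.
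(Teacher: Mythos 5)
Your proposal is correct and follows essentially the same route as the paper: the same ordering of generators (clique generators lexicographically, then the two incidence generators of each edge in turn), the same colon-ideal computations for linear quotients, and the same case analysis on the type of $m$ and of $t\in\set(m)$ for regularity of the decomposition function. One small slip: $y_k$ does not lie in the colon ideal defining $\set(x_{b_k}y_k)$, since $y_k\cdot x_{b_k}y_k=x_{b_k}y_k^2$ is divisible by no earlier generator (in particular $x_{a_k}y_k\nmid x_{b_k}y_k^2$, as that would require $x_{a_k}\mid x_{b_k}y_k$); this is harmless here --- the colon ideal is still generated by variables and the extra index would not break the regularity check --- but the correct $\set$ is the one without $y_k$, which is what the paper records.
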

\begin{proof}
Let $I$ be an edge ideal of a graph with $n$ vertices.  
The Booth-Lueker ideal of $I$ is given by 
$$
BL(I)=\left(x_1x_2,x_1x_3,\ldots,x_1,x_n,x_2x_3,\ldots,x_2x_n,\ldots,x_{n-1}x_n, \{x_iy_k,x_jy_k\, |\, ij \textrm{ is the k-th edge}\}\right).$$
First we want to show that the ideal has linear quotients for the ordering given above. 
Note that we have chosen our ordering such that for a monomial $x_ix_j$ we have $i<j$. If we consider the colon ideal of the form 
$<x_1x_2,\ldots,x_ix_{j-1}>:x_ix_j$
we can compute it to get that it has generators $x_k$ with $k<i$ or $i<k<j$. Thus until the $y$ variables the colon ideal satisfies the condition of being generated by variables. 

Let us look at the colon ideals of the form $$<x_1x_2,\ldots,x_{n-1}x_{n},x_{i_1}y_1,x_{j_1}y_1,\ldots,x_{i_t}y_t>:x_{j_t}y_t,$$ again one can compute the generators of it and these are $x_k$ with $k\neq j_{t}$and $y_i$ corresponding to an edge with $x_{i_{t}}$ with $i<t$.
Finally the last type of colon ideal we can have is 
$$<x_1x_2,\ldots,x_{n-1}x_{n},x_{i_1}y_1,x_{j_1}y_1,\ldots,x_{i_t}y_t,x_{j_t}y_t>:x_{i_{t+1}}y_{t+1}$$ and as before computing the generators gives $x_k$ with $k\neq i_{t+1}$ and $y_i$ corresponding to an edge with $x_{i_{t+1}}$ with $i<t+1$. Thus we get that $BL(I)$ has linear quotients with respect to to the order given above. 

Using the order given in the defintion of linearisation, the $\set$s of the generators can be explicitly computed to obtain 
$$\set(x_ix_j)=\{1,\ldots,i-1,i+1,\ldots,j-1\}$$
$$\set(x_iy_k)=\{1,\ldots,i-1,i+1,\ldots, n\}\cup\{n+t | x_iy_t \textrm{is a generator of }BL(I)\}$$
Note that if $t\in\set(x_ix_j)$ then $t<j$.
Next we want to check the regularity of the decomposition function. For the generators of the form $x_ix_j$ we get the following from the multiplication by variables given by the $\set(x_ix_j)$:
$$\set(b(x_tx_ix_j))=\set(x_ix_t)=\left\lbrace\begin{array}{cc}
\{1,2,\ldots,t-1,t+1,\ldots,i-1\} & \mathrm{if }\ t<i\\
\{1,2,\ldots,i-1,i+1,\ldots,t-1\} & \mathrm{if }\  i>t
\end{array}\right. .$$
It is clear that both of the sets above are contained in $\set(x_ix_j)=\{1,\ldots,i-1,i+1,\ldots,j-1\}$.

Next let us consider the regularity for the generators of the form $x_iy_k$. In this notation $x_{n+j}=y_j$.
We can divide the computation of $\set(b(x_tx_iy_k))$ to two cases, firstly when $x_ix_t$,  for any $t>n$,  is not a generator, and secondly when $x_ix_t$, for at least one $t>n$, is a generator. In the first case the computation gives us the same result as with $x_ix_j$.
In the second case we further subdivide to $t<n$ and $t>n$. 
\begin{itemize}
\item[$t<n$]When $t<n$ we have $\set(b(x_tx_iy_k))=\set(x_tx_i)$ which is $\{1,2,\ldots,t-1,t+1,\ldots,i-1\}$ or $\{1,2,\ldots,i-1,i+1,\ldots,t-1\}$. Both are contained in the set $\{1,\ldots,i-1,i+1,\ldots, n\}\cup\{n+t | x_iy_t \textrm{is a generator of }\lin(I)\}$.

\item[$t>n$] When $t>n$ we have $\set(b(x_tx_iy_k))=\set(x_tx_i)=\{1,\ldots,i-1,i+1,\ldots, n\}\cup\{n<r<t | x_ix_r \textrm{is a generator of }\lin(I)\}$. The first part is clearly contained in $\set(x_iy_k)$, further more we have that $\{n<r<t | x_ix_r \textrm{is a generator of }\lin(I)\}\subseteq\{n+t | x_iy_t \textrm{is a generator of }\lin(I)\}$ since $x_t$ must be bofore $y_k$ in the ordering of the variables.
\end{itemize}
Thus we get that the decomposition function satisfies the definition of regularity.
\end{proof}

\begin{corollary}
Every ideal of a Booth-Lueker graph has a minimal cellular resolution coming from the mapping cone resolution construction.
\end{corollary}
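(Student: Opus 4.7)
The corollary is a direct combination of the two results just established: Proposition \ref{lqr} verifies the hypotheses of Theorem \ref{afmain} for the ideal $BL(I_G)$ of any simple graph $G$, and Theorem \ref{afmain} then hands us exactly the conclusion we want. So my plan is simply to invoke these two facts in sequence.

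More concretely, I would begin by fixing an arbitrary simple graph $G$ and its edge ideal $I_G$, and letting $BL(I_G)$ denote the edge ideal of the Booth-Lueker graph $BL(G)$. By Proposition \ref{lqr}, $BL(I_G)$ has linear quotients with respect to the explicit ordering
\[
\bigl(x_1x_2,\, x_1x_3,\,\ldots,\, x_{n-1}x_n,\, \{x_iy_k,\, x_jy_k \mid ij \text{ is the } k\text{-th edge}\}\bigr),
\]
and with respect to this ordering the decomposition function is regular. These are exactly the two hypotheses required by Theorem \ref{afmain}.

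Applying Theorem \ref{afmain} to $BL(I_G)$ then yields that the minimal free resolution of $BL(I_G)$, built as an iterated mapping cone in the order above, is cellular and supported on a regular CW-complex. Since $G$ was arbitrary, this proves the corollary. There is no real obstacle here — all the work was done inside Proposition \ref{lqr}, where the $\operatorname{set}$s and the behaviour of the decomposition function under multiplication by the relevant variables were computed explicitly; the corollary itself is just the packaging of that work through the Herzog–Takayuki machinery recorded as Theorem \ref{afmain}.
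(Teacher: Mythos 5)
Your proof is correct and follows exactly the same route as the paper: verify the hypotheses of Theorem \ref{afmain} via Proposition \ref{lqr} and then apply the theorem. The only nitpick is attributional -- Theorem \ref{afmain} is the Dochtermann--Mohammadi result (building on Herzog--Takayama's mapping-cone construction), not ``Herzog--Takayuki'' -- but this does not affect the argument.
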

\begin{proof}
By Proposition \ref{lqr} the edge ideal $BL(I_G)$ of a Booth-Lueker graph has linear quotients and a regular decomposition function. Thus we can apply the Theorem \ref{afmain} to $BL(I_G)$ and get that $BL(I_G)$ has a cellular resolution made with mapping cones.
\end{proof}

Let us consider the functor from a family of edge ideal resolutions to Booth-Lueker ideal resolutions. Given an edge ideal, we can define the Booth-Lueker ideal, and this gives us a way to send a cellular resolution of an edge ideal to a cellular resolution to a corresponding Booth-Lueker edge ideal. The morphisms on edge ideals then give well defined morphisms on the Booth-Lueker ones. For the rest of this section we will denote the variable $y_k$ belonging to the $k$-th edge between $x_i$ and $x_j$ by $y_{ij}$

Let  $F_I$ and $F_J$ be two cellular resolutions that belong to edge ideals $I$ and $J$, and suppose that we have a map between $F_I$ and $F_J$. From our earlier observations this map has to be an embedding. Let us consider the resolutions for the Booth-Lueker ideals $BL(I)$ and $BL(J)$. The generators of $I$ and $J$ are contained in these ideals and moreover we have an embedding of the generators of $BL(I)$ to $BL(J)$. The cell complexes supporting  Booth-Lueker ideal resolutions coming from mapping cones are formed of a piece that corresponds to the complete graph. This can be taken to be minimal or the non-minimal one contained in the cell complex supporting a maximal ideal, and $n-1$ or higher dimensional cells from adding the vertices containing $y_{ij}$. Now we may assume that the ordering on the ideals $BL(I)$ and $BL(J)$ is such that any monomials from vertices in $BL(J)$ that are not in $BL(I)$ are ordered last. Then we have an embedding of the cell complex supporting the resolution of $BL(I)$ to the cell complex supporting a resolution of $BL(J)$ if the corresponding graphs have the same number of vertices.  If the underlying graphs have a different number of vertices, we can still embed the cell complex supporting $BL(I)$ to $BL(J)$ in the mapping cone construction gives subcomplex at each step and they are glued on in the same way for both $BL(I)$ and $BL(J)$, assuming one is consistent with the choices in building the cell complex.

Let $\edgecat$ denote the category of cellular resolutions coming from edge ideals of graphs with at most $n$ vertices, and let $\edgecatbl$ denote the category of cellular resolutions of coming from edge ideals of graphs with at most $n+\frac{n(n-1)}{2}$ vertices. If we take an edge ideal and go to the Booth-Lueker ideal of it, the resolutions will move from $\edgecat$ to $\edgecatbl$. The resolutions in $\edgecat$ are defined over the polynomial ring with $n$ variables, and we will use the variables $x_1,\ldots,x_n$ for this polynomial ring. When we use these categories in the context of having a functor the variables in the polynomial ring for $\edgecatbl$ are denoted by $$x_1,\ldots,x_n,y_{12},y_{13},\ldots,y_{1n},y_{23},\ldots,y_{(n-1)n}.$$
\begin{definition}
Let $\edgecat$ denote the category of cellular resolutions coming from edge ideals of graphs with at most $n$ vertices and $m$ edges in $k[x_1,\ldots,x_n]$. Then define the functor
$$\operatorname{BL}:\edgecat\rightarrow \edgecatbl$$
by sending cellular resolution $F_G$ to a minimal resolution of Booth-Lueker edge ideal of $G$ $F_{BL(G)}$. The functor $\operatorname{BL}$ takes an embedding of cellular resolutions to an embedding on the Booth-Lueker resolutions.
\end{definition}

This functor can be restricted to families of cellular resolutions in which case it takes a family over with individual resolutions belonging to $\edgecat$ to a family where the resolutions are in $\edgecatbl$. 

Recall that in Section \ref{repstab} we defined a property for functors called property (F). Next we show that in this setting of edge ideals the $\operatorname{BL}$ functor satisfies this property.
\begin{proposition}
The restriction of the functor $\operatorname{BL}$ between families satisfies the property (F).
\end{proposition}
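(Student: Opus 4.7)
The plan is to exploit two finiteness features: the category $\edgecat$ contains only finitely many isomorphism classes of objects (since graphs on at most $n$ vertices form a finite set), and for a fixed object $x \in \edgecatbl$ the Hom-sets into any $\operatorname{BL}(y)$ are finite (embeddings of labelled cell complexes with bounded labels). Both play the key role, so I would make them explicit at the start of the proof.

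First I would fix an object $x = F_H \in \edgecatbl$, where $H$ is an edge ideal of some graph. Recall from the introduction to this section that, because the generators of an edge ideal have degree $2$ and $f_0$ is multiplication by a monomial, any morphism $F_H \to F_{BL(G)}$ must have $f_0 = \operatorname{id}$. Consequently, on the level of cell complexes it is an embedding that identifies the generators of $H$ with a sub-ideal of $BL(G)$. Given such an $f$, I would let $G_f \subseteq G$ be the smallest subgraph of $G$ for which the image of $f$ is already contained in $F_{BL(G_f)}$; this subgraph is determined by the vertices and edges of $G$ that actually appear in the image, and since $H$ has a bounded number of generators the vertex count of $G_f$ is bounded, so $G_f$ represents an object of $\edgecat$.

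Since there are only finitely many isomorphism classes of graphs on at most $n$ vertices, I would enumerate representatives $G_{(1)}, \ldots, G_{(r)}$ of the possible minimal hosts and, for each one that admits an embedding $x \to \operatorname{BL}(F_{G_{(i)}})$, select one such embedding $f_i$ per isomorphism class; this selection is finite because each $\operatorname{Hom}(x, \operatorname{BL}(F_{G_{(i)}}))$ is a finite set. I would then take the $y_i := F_{G_{(i)}}$ paired with these selected $f_i$'s as the finite family required by property (F). For the factorization step, given any $f: x \to \operatorname{BL}(F_G)$ with $F_G \in \edgecat$, its minimal host $G_f$ is isomorphic to some $G_{(i)}$; the inclusion $G_f \hookrightarrow G$ lifts to a morphism $g: y_i \to F_G$ in $\edgecat$, and by construction the restricted embedding $f|_{F_{BL(G_f)}}$ agrees (up to the isomorphism $G_f \cong G_{(i)}$) with the selected $f_i$. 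Thus $f = \operatorname{BL}(g) \circ f_i$.

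The main obstacle I expect is verifying that the mapping-cone construction used to build the cellular resolutions in $\edgecatbl$ behaves compatibly with subgraph inclusions, so that the minimal host $G_f$ really does give an embedded subresolution rather than merely a compatible sub-ideal. In other words, one must check that with a consistent ordering of the generators (compatible with the one used in the proof of Proposition \ref{lqr}), the iterated mapping cone producing $F_{BL(G_f)}$ sits as the expected subcomplex inside the iterated mapping cone for $F_{BL(G)}$. This is exactly the type of compatibility discussed informally just before the statement of the proposition, and the careful verification — choosing orderings so that any monomials not in $BL(G_f)$ come last, and tracking how cells are glued in at each mapping-cone step — is the delicate part of the argument.
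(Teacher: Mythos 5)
Your argument is correct in substance but takes a genuinely different, and more elaborate, route than the paper. The paper's proof exploits the fact that for the \emph{restricted} functor the target category is the image family $\operatorname{BL}(\F)$, so every object $x$ is already of the form $F_{BL(I)}$ for some $F_I\in\F$; it then takes the single object $y_1=F_I$ with $f_1=\operatorname{id}\colon F_{BL(I)}\to\operatorname{BL}(F_I)$, and observes that any morphism $F_{BL(I)}\to\operatorname{BL}(F_J)$ is the unique embedding, hence equals $\operatorname{BL}(f)$ for the unique embedding $f\colon F_I\to F_J$, giving the factorization through the identity immediately. You instead treat an arbitrary object $x=F_H$ of $\edgecatbl$ and manufacture the finite family via ``minimal hosts'' $G_f$ together with the finiteness of graphs on at most $n$ vertices; this proves a stronger statement (property (F) for sources not in the image of $\operatorname{BL}$) and is the version of the argument that would survive in the unrestricted setting of Section 7, at the cost of having to verify that the mapping-cone complex for $BL(G_f)$ embeds into that for $BL(G)$ --- a compatibility the paper itself only treats informally in the discussion preceding the proposition, so you are not assuming more than the paper does. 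One soft spot: when you ``select one embedding $f_i$ per isomorphism class,'' the factorization $f=\operatorname{BL}(g)\circ f_i$ would fail if $\operatorname{Hom}(x,\operatorname{BL}(F_{G_{(i)}}))$ contained several distinct embeddings restricting differently; this is rescued here only because $f_0$ is forced to be the identity on degree-$2$ generators, so there is at most one morphism between distinct edge-ideal resolutions (the fact the paper's proof leans on explicitly). It would be worth stating that uniqueness rather than leaving it implicit in ``agrees up to the isomorphism $G_f\cong G_{(i)}$.'' Finally, note that since $\edgecat$ has finitely many objects and finite Hom-sets, property (F) holds for essentially formal reasons by taking \emph{all} objects and \emph{all} morphisms as the test family; both your proof and the paper's are refinements of this observation, with yours identifying the smaller, structurally meaningful generating data.
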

\begin{proof}
Let $\F$ denote a family of cellular resolutions coming from edge ideals and let  $\operatorname{BL}(\F)$ denote the family of cellular resolutions obtained by taking each resolution $F_I$ in $\F$ to a minimal cellular resolution of the associated Booth-Lueker ideal $F_{BL(I)}$. 

Let $F_{BL(I)}$ be an arbitrary cellular resolution in $\operatorname{BL}(\F)$. We know that there is an ideal $I$ whose Booth-Lueker ideal $BL(I)$ is, and we can choose the resolution $F_I$ as our finite set of elements in $\F$. Then we get a morphism $F_{BL(I)}\rightarrow F_{BL(I)}$ that is just the identity, denoted by $\operatorname{id}$. Now pick any $F_J$ in $\F$, then the morphism from $F_{BL(I)}\rightarrow \operatorname{BL}(F_J)$ is either an embedding or there is no morphism. Let us assume we have an embedding and call it $g$. Then picking the embedding $f$ between $F_I$ and $F_J$ gives that $\operatorname{BL}(F)$ is an embedding of $F_{BL(I)}$ to $\operatorname{BL}(F_J)$. Since we only have one possible morphism between two different cellular resolutions coming from different edge ideals, we have that $g=\operatorname{BL}(F)=\operatorname{BL}(f)\circ\operatorname{id}$.
\end{proof}

This means we can pullback information on the representation from $\operatorname{BL}$ to original graphs. This allows one transfer to finiteness results from the Booth-Lueker ideals to other edge ideals, if there are some. However in this setting of restricting the number of vertices on the graph one necessarily has only finitely many possible edge ideals that we can get and hence syzygies by default are finitely generated.

We would like to consider sequences of cellular resolutions coming from graphs where we have not restricted the number of vertices, which means we would have to work over the polynomial ring with infinitely many variables. In practice we can consider the family where each cellular resolution has its own polynomial ring in suitably many variables.

\section{Unrestricted families of cellular resolutions}
In this section we want to consider families of cellular resolutions that do not have restrictions on the number of variables appearing in the polynomial ring. By our previous definition of family of cellular resolutions this means we would have to work over the polynomial ring with infinitely many variables. However, each individual cellular resolution we consider is still assumed to only have finitely many variables in the defining ideal, that is it lives in a polynomial ring with finitely many variables. This observation allows us to consider the unrestricted family as something consisting of cellular resolutions each over their own polynomial ring. 

For this section we assume all our polynomial rings are over the same base field $k$.

\begin{proposition}
If $F$ is a cellular resolutions over the polynomial ring $S=k[x_1,\ldots,x_n]$ then the change of variables to the ring $S'=k[y_1,\ldots,y_m]$, with $m\geq n$, gives also a cellular resolution. 
\end{proposition}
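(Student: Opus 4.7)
The plan is to reduce the statement to Proposition \ref{bound}, which characterises cellular resolutions by acyclicity of all bounded subcomplexes $X_{\preceq\mathbf{b}}$. First I would set up the change of variables precisely: the inclusion $S\hookrightarrow S'$ (or more generally a substitution $x_i\mapsto y_{\sigma(i)}$ for some injection $\sigma\colon\{1,\dots,n\}\to\{1,\dots,m\}$) induces an embedding $\mathbb{N}^n\hookrightarrow\mathbb{N}^m$ on exponent vectors, and hence a relabelling of the cell complex $X$ supporting $F$ by monomials in $S'$. The new cellular complex $\F_X\otimes_S S'$ is built on the same underlying regular CW-complex $X$, with the same scalar entries in the differentials, and with the monomial multipliers now read in $S'$.

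Next I would invoke Proposition \ref{bound}. For an arbitrary $\mathbf{b}\in\mathbb{N}^m$, I want to show $X_{\preceq\mathbf{b}}$ is acyclic over $k$. The key observation is that every face label of $X$ has exponents supported on the first $n$ coordinates (i.e.\ is zero in the coordinates corresponding to $y_{n+1},\dots,y_m$, or more generally outside the image of $\sigma$). Consequently, the condition $\mathbf{a}_F\preceq\mathbf{b}$ depends only on the coordinates of $\mathbf{b}$ in the image of the embedding; writing $\mathbf{b}'\in\mathbb{N}^n$ for those coordinates, we have the equality of subcomplexes $X_{\preceq\mathbf{b}}=X_{\preceq\mathbf{b}'}$. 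Since $F$ is a cellular resolution over $S$ by hypothesis, Proposition \ref{bound} gives acyclicity of $X_{\preceq\mathbf{b}'}$, and hence of $X_{\preceq\mathbf{b}}$.

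Finally, since $X_{\preceq\mathbf{b}}$ is acyclic over $k$ for every $\mathbf{b}\in\mathbb{N}^m$, a second application of Proposition \ref{bound}, this time over $S'$, shows that the relabelled cellular free complex is a cellular resolution. I would also check briefly that the differentials of $\F_X$ remain well defined after the substitution: the formula $\partial(F)=\sum_{G\subset F}\operatorname{sign}(G,F)x^{\mathbf{a}_F-\mathbf{a}_G}G$ uses only differences of face labels, which are preserved under the embedding $\mathbb{N}^n\hookrightarrow\mathbb{N}^m$.

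No step looks genuinely hard; the only mild subtlety is making sure the identification $X_{\preceq\mathbf{b}}=X_{\preceq\mathbf{b}'}$ is carefully justified when $m>n$ (the extra coordinates of $\mathbf{b}$ place no constraint because the labels are already $0$ there), and that the cellular complex structure genuinely transports along the change of variables rather than merely the underlying chain complex of modules.
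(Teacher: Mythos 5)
Your proposal is correct. The paper's own proof is a single sentence --- relabelling the variables does not change the relations between the monomials, hence the supported complex is still a resolution --- so you are following essentially the same idea, but you make it precise by routing the claim through Proposition \ref{bound}: every face label has exponent vector supported in the image of the coordinate embedding $\mathbb{N}^n\hookrightarrow\mathbb{N}^m$, so each bounded subcomplex $X_{\preceq\mathbf{b}}$ taken over $S'$ equals a bounded subcomplex $X_{\preceq\mathbf{b}'}$ taken over $S$, whose acyclicity is exactly the hypothesis; a second application of Proposition \ref{bound} then closes the argument. This is a sharper record of the same reasoning than what the paper writes down, and your closing remarks (that the differential only involves differences of labels, and that the identification of bounded subcomplexes is where $m>n$ must be handled) are exactly the points the paper's one-liner leaves implicit.
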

\begin{proof}
Relabelling the variables does not change the relations between the monomials, hence the cell complex supports a cellular resolution after the change of the ring. 
\end{proof}
\begin{remark}
There is often more than one way of changing the variables to the bigger ring, however not all of the possible changes work with the cellular resolution maps that we want to have. 
\end{remark}

There are multiple options how to change the variables to go from one ring to another. However our aim is to study families of cellular resolutions in this setting so we are only interest in those changes of variable that are compatible with a cellular resolution map. To illustrate this we have the following example.

\begin{example}
Let $S_1=k[x,y,z,w]$ and $S_2=k[x,y,z,w,t]$ be two polynomial rings. Let us consider the cellular resolutions for the ideals $I_{P_4}$ and $I_{P_5}$. From Section \ref{paths} we know that they both have a minimal cellular resolution. The minimal resolution for $I_{P_4}$ is the resolution of $G_1$ from Example \ref{exgraph} considered over the ring $S_1$. This resolution is supported on a cell complex formed on three vertices and two edges. We can compute a minimal resolution for $I_{P_5}$ over the ring $S_2$:
$$S\xleftarrow{\left[ \begin{array}{cccc} 
xy&yz& zw& wt
\end{array}\right]} S^4\xleftarrow{\left[\begin{array}{cccc}
-z&-wt&0&0\\
x&0&-w&0\\
0&0&y&-t\\
0&xy&0&z
\end{array}\right]}^4\xleftarrow{\left[\begin{array}{c}
wt\\ -z\\ x\\ xy
\end{array}\right]} S\leftarrow 0.$$
Changing the ring for the resolution of $I_{P_4}$ from $S_1$ to $S_2$ has multiple ways to do it if we allow all possible changes and permutations. We want to be able to map the resolution to that of $I_{P_5}$, and so we only want those changes that give us a subset of the generators of $I_{P_5}$. In practise the possible subsets that can give a map between the resolutions are $xy,yz,zw$ and $yz,zw,wt$. The changes of variables that give these are $x\mapsto x,y\mapsto y,z\mapsto z,w\mapsto w$ and $x\mapsto y,y\mapsto z,z\mapsto w,w\mapsto t$ and $x\mapsto t,y\mapsto w,z\mapsto z,w\mapsto y$ and $x\mapsto w,y\mapsto z,z\mapsto y,w\mapsto x$, and they give maps that are embeddings between the cellular resolutions with possibly a change of orientation on the cells. 
\end{example}

The example above then motivates the following definition for a family of cellular resolution with no restriction on the polynomial ring. 
\begin{definition}
Let $\F$ be family of cellular resolutions such that each resolution $F_i$ is over a polynomial ring $S_i$. We call such family the \emph{unrestricted family of cellular resolutions}. 

The unrestricted family forms a category with the objects being the individual resolutions and morphisms are compositions of a change of a ring map and a cellular resolution morphism. 
\end{definition}

We can lift many definitions from the earlier sections to the unrestricted family setting. This includes the definition of covering in Definition \ref{coveringdef}.
It still holds since the cell complexes do not change and morphisms behave like the maps between cell complexes in the ordinary family case.
Furthermore Definition \ref{linearfam} of a linear family can also be applied directly to the unrestricted family.


Next we want to consider the representations of the unrestricted family. Let $\infring$ denote the polynomial ring with infinitely many variables. We want to consider representations to the modules over this infinite ring. 
\begin{definition}
A representation of an unrestricted family $\F$ is a functor $M:\F\rightarrow \infmod$.
\end{definition}
Then we can define the following particular representations.

\begin{definition}
The $t$-th module representation
$$\submodinf: \F\rightarrow \infmod$$
 such that $\submodinf(F_i)$ is the free module over $\infring$ with generators in the same degrees as the $t$-th free module in the resolution, and $\submodinf(F_i\rightarrow F_j)$ is the matrix of the chain map from $F_i$ to $F_j$ on the $t$-th component.
\end{definition}
\begin{definition}
\label{infsyzf}
Let $\F$ be an unrestricted family of cellular resolutions. The $t$-th syzygy functor 
$$\syzmodinf:\F\rightarrow \infmod$$
is defined by taking $F\in \F$ to the finitely generated submodule of a free $\infring$-module $\submodinf(F_i)$ with the same generators as the $t$-th syzygy module of $F$, and the morphisms are restrictions of the free module maps to the submodule. 
\end{definition}
\noindent
Note that $\syzmodinf$ is a subfunctor of $\submodinf$.

Making use of the fact that the unrestricted family behaves similar to the family over a single polynomial ring we get analogues of the results from the previous sections in the setting of unrestricted families.
\begin{proposition}
\label{infnoeth}
If $\F$ is an unrestricted family of cellular resolutions such that it is linear then the representations $\operatorname{Rep}_{\infring}(\F)$ form a noetherian category. 
\end{proposition}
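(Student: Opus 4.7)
The plan is to adapt the proof of Proposition \ref{noethrep} to the unrestricted setting, since the argument there is essentially combinatorial and depends only on the linear structure of the family together with the behaviour of Hom sets. First I would invoke Proposition \ref{noeth}, which reduces noetherianity of $\operatorname{Rep}_{\infring}(\F)$ to noetherianity of every principal projective $P_x$, where now $P_x(y)=\infring[\operatorname{Hom}(x,y)]$ with morphisms in the unrestricted category consisting of compositions of change-of-ring maps with cellular resolution morphisms.

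Fix $x$ corresponding to some $F_i\in\F$. The first step is to observe that linearity prevents backward morphisms: $\operatorname{Hom}(F_i,F_j)$ is empty (or consists only of selfmaps when $j=i$) for $j<i$, so $P_x$ is trivial on these objects and it suffices to study $P_x$ on $\F_{\geq i}$. The second step is to use the linear decomposition: any $f\in\operatorname{Hom}(F_i,F_j)$ with $j>i$ can be written as $g_{j-1}\circ g_{j-2}\circ\cdots\circ g_i$ with $g_k\in\operatorname{Hom}(F_k,F_{k+1})$. Translating to the principal projective, every generator $e_f$ of $P_x(F_j)$ equals $p_{g_{j-1}}(e_{g_{j-2}\circ\cdots\circ g_i})$, so each generator of $P_x(F_j)$ arises from a finite sequence of post-compositions applied to a generator of $P_x(F_i)=\infring[\operatorname{Hom}(F_i,F_i)]$.

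Given this, I would handle ascending chains of subrepresentations exactly as in Proposition \ref{noethrep}: for the full subfamily $\F_{\geq i}$, all generators are reachable from $\operatorname{Hom}(F_i,F_i)$ via compositions, so the chain stabilises. For a subsequence indexed by $j_1<j_2<\cdots$, the same decomposition (with consecutive post-compositions possibly grouped) shows that all generators of $P_x(F_{j_k})$ are reachable from those of $P_x(F_{j_1})$, again giving finite generation on every term and hence stabilisation. Applying Proposition \ref{noeth} concludes noetherianity.

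The main obstacle is ensuring the change-of-ring component of the unrestricted morphisms does not disrupt the decomposition argument. Concretely, one needs that Hom-sets between consecutive resolutions in the unrestricted family are finite (or at least finitely generated in the relevant sense), and that a composition of consecutive unrestricted morphisms gives every morphism $F_i\to F_j$; both of these should follow from the unrestricted analogue of linearity combined with the fact that a change-of-ring map sending the variables of $S_i$ into those of $S_{i+1}$ is, at the level of labels, just a renaming that composes naturally with the subsequent cellular morphism. Verifying this compatibility carefully—especially that selfmaps and repeated change-of-ring choices do not produce infinitely many non-equivalent morphisms between consecutive resolutions—is the delicate point, and the rest of the proof is a direct transcription of the earlier argument with $S$ replaced by $\infring$.
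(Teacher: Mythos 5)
Your proposal follows essentially the same route as the paper: the paper's proof simply observes that the morphisms of a linear unrestricted family behave as in the single-ring case and then invokes the argument of Proposition \ref{noethrep} for the principal projectives, concluding via Proposition \ref{noeth}. Your additional care about the change-of-ring components and the finiteness of consecutive Hom-sets is a reasonable point (the paper's statement omits the ``finitely generated Hom sets'' hypothesis present in Proposition \ref{noethrep}, and its proof does not address this), but it does not change the method.
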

\begin{proof}
Suppose that $\F$ is an unrestricted family of cellular resolutions such that it is linear. Then the morphisms in this family behave in the same way as for the linear family over a single polynomial ring. Thus we have that the principal projectives are all noetherian following the proof of Proposition \ref{noethrep}. Then it follows from the principal projectives being noetherian that the representation category $\operatorname{Rep}_{\infring}(\F)$ is noetherian.
\end{proof}
\begin{proposition}
\label{inffg}
If $\F$ is an unrestricted family of cellular resolutions such that it is linear and the cell complexes have covering in dimension $t$ for all $i$ large enough, then the syzygy functor $\syzmodinf$ is finitely generated for $t$.
\end{proposition}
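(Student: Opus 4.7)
The plan is to mirror the proof of Theorem \ref{covering} in the unrestricted setting, leveraging the two analogues already established: Proposition \ref{infnoeth} for noetherianity of $\operatorname{Rep}_{\infring}(\F)$, and the fact that the notion of covering (Definition \ref{coveringdef}) depends only on the cellular maps and therefore transfers verbatim to unrestricted families. The overall strategy is: noetherianity plus finite generation of $\submodinf$ forces finite generation of its subfunctor $\syzmodinf$.

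First I would apply Proposition \ref{infnoeth} to conclude, from linearity of $\F$, that $\operatorname{Rep}_{\infring}(\F)$ is a noetherian category.

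The main step is then to prove the unrestricted analogue of Lemma \ref{dcover}: if the cell complexes supporting $F_i$ admit a $(t-1)$-covering by finitely many $X_{j_1},\ldots,X_{j_r}$ with $j_k<i$ for all sufficiently large $i$, then $\submodinf$ is finitely generated. The argument is the same as in the original lemma, with the observation that the generators of $\submodinf(F_i)$ are still in one-to-one correspondence with the $(t-1)$-cells of $X_i$ (since we defined $\submodinf(F_i)$ as the free $\infring$-module with generators in the same degrees as the $t$-th free module of $F_i$), and that the morphism $\submodinf(F_i \to F_j)$ acts on these generators in the same way as the underlying cellular map acts on cells, modulo a relabeling of variables coming from the change-of-ring component of the unrestricted morphism. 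The covering hypothesis therefore implies that every generator of $\submodinf(F_i)$ for $i$ large is the image under some morphism in $\F$ of a generator coming from finitely many smaller $\submodinf(F_{j_k})$; together with the finitely many generators arising from resolutions below the threshold index, this yields a finite generating set for $\submodinf$.

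Finally, by Definition \ref{infsyzf}, $\syzmodinf$ is a subfunctor of $\submodinf$. Since $\operatorname{Rep}_{\infring}(\F)$ is noetherian and $\submodinf$ is finitely generated, every subrepresentation of $\submodinf$ is finitely generated; hence $\syzmodinf$ is finitely generated, as desired.

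The main obstacle will be making sure that the change-of-ring component of unrestricted morphisms does not disrupt the bookkeeping underlying Lemma \ref{dcover}. Concretely, one must check that in the unrestricted setting the image of a generator of $\submodinf(F_i)$ under $\submodinf(F_i \to F_j)$ still corresponds, via the underlying cellular map, to the expected $(t-1)$-cell in $X_j$, so that the cover-to-generation translation in the lemma still goes through. Once this compatibility is in place, everything else is a direct transport of the original proofs into the $\infring$-module setting.
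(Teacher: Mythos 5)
Your proposal is correct and follows essentially the same route as the paper: noetherianity from Proposition \ref{infnoeth}, transporting the argument of Lemma \ref{dcover} to show the covering hypothesis makes $\submodinf$ finitely generated (using that generators correspond to cells and morphisms act via the underlying cellular maps), and then concluding for the subfunctor $\syzmodinf$ by noetherianity. The compatibility concern you flag about the change-of-ring component is exactly the point the paper handles implicitly with the remark that the chosen maps correspond to the cellular maps, so no genuinely different idea is involved.
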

\begin{proof}
The generators of the modules in $\submodinf(F)$ correspond to the cells in the cell complex supporting the resolution of $F$. Therefore we can use the same argument as in the proof of Lemma \ref{covering}. Assume we have $t$-covering for some large enough $i$. Then we have that every cell complex above $i$ in the family is covered by some finite set of cell complexes. That means there is a finite set of $t$-cells that cover all other $t$-cells. This implies that on the level of free modules, every generator in the $t$-th modules is reachable from a finite set of generators since the chosen maps correspond to the cellular maps. Then by definition the representation $\submodinf$ is finitely generated for $t$. Moreover by Proposition \ref{infnoeth} the linearity implies that representations are noetherian, in particular this means any subrepresentation of finitely generated representation is finitely generated, so we have that $\syzmodinf$ is finitely generated. 
\end{proof}

\subsection{Simplexes supporting resolutions}
\label{dimproblem}
In this section we want to consider an explicit example of an unrestricted family of cellular resolutions. 
Let us take the $n$-simplex with variable labels where we add a new one variable at each dimension. Each simplex supports a cellular resolution but over a different ring.  This will give us a family of minimal cellular resolutions $\F$ and denote the resolution corresponding to the $n$-simplex by $F_n$.  
This family of cell complexes is shown in Figure \ref{scfamily}.

\begin{figure}
\begin{center}
\begin{tikzpicture}
\filldraw[color=red, fill=red!5,  thick](1,0)--(1.5,2);
\filldraw[color=red, fill=red!5,  thick](2.5,0)--(4.5,0)--(3.5,2)--cycle;
\filldraw[color=red, fill=red!5,  thick](5.5,0)--(7.5,0)--(6.5,2)--cycle;
\filldraw[color=red, fill=red!5,  thick](8,0.9)--(7.5,0)--(6.5,2)--cycle;
\draw[color=red, dashed](8,0.9)--(5.5,0);

\filldraw [black](1,0) circle (2pt) node[anchor=north] {$x_1$};
\filldraw [black](1.5,2) circle (2pt) node[anchor=south] {$x_2$};
\filldraw [black](2.5,0) circle (2pt) node[anchor=north] {$x_1$};
\filldraw [black](4.5,0) circle (2pt) node[anchor=north] {$x_3$};
\filldraw [black](3.5,2) circle (2pt) node[anchor=south] {$x_2$};
\filldraw [black](5.5,0) circle (2pt) node[anchor=north] {$x_1$};
\filldraw [black](7.5,0) circle (2pt) node[anchor=north] {$x_3$};
\filldraw [black](6.5,2) circle (2pt) node[anchor=south] {$x_2$};
\filldraw [black](8,0.9) circle (2pt) node[anchor=west] {$x_4$};
\end{tikzpicture}
\end{center}
\caption{Labelled cell complexes for the family in Section \ref{dimproblem}.}
\label{scfamily}
\end{figure}
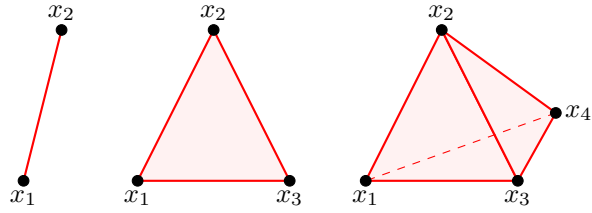
For the morphisms in the family we can take all possible maps, note that between $F_n$ and $F_{n+1}$ there are $(n+1)!$ maps. However if we are only interested in which cells map to which cell, then it is enough to consider only the maps that come from choosing $n$ variables from $n+1$ variables. 
\begin{proposition}
For a fixed $t$ the representation $\syzmodinf$ of $\F$ is finitely generated.
\end{proposition}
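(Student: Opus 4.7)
My plan is to verify the two hypotheses of Proposition \ref{inffg} for the family $\F$, namely linearity and $t$-covering of $F_n$ by earlier members of the family for all $n$ sufficiently large. Once these are in place, Proposition \ref{infnoeth} gives noetherianity of $\operatorname{Rep}_{\infring}(\F)$, and Proposition \ref{inffg} immediately delivers finite generation of $\syzmodinf$.

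For linearity, I would observe that a cellular-level morphism $F_n \to F_m$ with $n < m$ is specified by an injection of vertex labels, equivalently by a choice of $(n+1)$-element subset $V \subset V_m$ of the $(m+1)$ vertices of $F_m$. Given such data, pick any chain of vertex sets $V = W_n \subsetneq W_{n+1} \subsetneq \cdots \subsetneq W_m = V_m$ with $|W_k|=k+1$. Each step adds a single vertex and therefore corresponds to one of the designated facet inclusions $F_k \hookrightarrow F_{k+1}$ between consecutive members. Composing these recovers the original morphism, so $\F$ is linear in the sense of Definition \ref{linearfam} extended to the unrestricted setting.

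For the covering, fix $t$ and let $n \geq t+1$. A $t$-cell of $F_n$ is a $(t+1)$-subset $\sigma$ of the vertex set $V_n$, which has cardinality $n+1$. Since $|\sigma| = t+1 \leq n < n+1$, there exists $v \in V_n \setminus \sigma$, and the facet inclusion $F_{n-1} \hookrightarrow F_n$ whose image is the face $V_n \setminus \{v\}$ contains $\sigma$ in its image. Hence every $t$-cell of $F_n$ lies in the image of some morphism from $F_{n-1}$, giving $t$-covering for all $n \geq t+1$. Proposition \ref{inffg} then yields the claim. The main obstacle, insofar as there is one, is conceptual rather than technical: one must be careful in identifying morphisms of the unrestricted family with cellular face inclusions (with the change-of-ring built in), and verify that the notions of linearity and $t$-covering transport correctly from the single-ring setting. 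Once this identification is made, both checks reduce to elementary combinatorics of faces of simplices.
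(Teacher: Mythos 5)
Your proposal is correct and takes essentially the same route as the paper: verify that $\F$ is linear by factoring any vertex-injection $F_n\to F_m$ through a chain of consecutive facet inclusions, observe that for $n>t$ every $t$-cell of the $n$-simplex omits some vertex and hence lies in the image of an embedding of $F_{n-1}$ (so $t$-covering holds for all large $n$, even though the top cell is never covered), and then apply Propositions \ref{infnoeth} and \ref{inffg}. Your write-up is in fact slightly more explicit than the paper's about the combinatorics of the vertex subsets and about identifying morphisms with face inclusions up to change of ring.
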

\begin{proof}
Let $\F$ be the family of cellular resolutions supported on the simplices. Then any map between some resolutions $F_i$ and $F_j$ can be written as composition of consecutive maps, and we have map between any two of the cellular resolutions. Therefore the family $\F$ is a linear family and we get that the representation category is noetherian. 

Next we want to show that for a fixed dimension $t$ we have covering of $t$ dimensional cells. For this we will only consider the maps that give different set of variables after the change of ring. Then we will have $n+1$ maps between the resolutions $F_n$ and $F_{n+1}$. Let $X_n$ denote the $n$-simplex. The resolution maps we are considering correspond to the embeddings of $X_n$ to $X_{n+1}$. There are $n+1$ possible embeddigns between $X_n$ and $X_{n+1}$, which also cover all $n$-dimensional cells. However, none of the embeddings cover the $(n+1)$-dimensional cell. Thus if we fix a dimension $t$, taking all cell complexes up $X_t$ will then give a covering of the $t$-dimensional cells with the embeddings. Thus for the family $\F$ we have that for a fixed $t$ we have $t$-covering. 

Hence by Proposition \ref{inffg} we get that the representation $\syzmodinf$ is finitely generated for a fixed $t$.
\end{proof}

\section{Open questions}
Finally we list few open questions that arise from the previous sections. 

In all of our examples the families of cellular resolutions have been linear, so a natural question would be to ask what about non-linear families. Can we find non-linear families that have finitely generated syzygies or satisfy other properties like noetherian representation category?
Another observation in all our examples is that we used noetherianity to prove finite generation of syzygies, and often the noetherianity of the representation category is inherited from the nice structure the family has which in first place suggested finite generation of syzygies. One can then ask whether there exists a family of cellular resolutions that has finitely generated syzygies but without a noetherian representation category?

In this paper we focused on the syzygies of the families of cellular resolutions. Thus one can ask if the representations can be used to study other properties than syzygies for the families. We also note that the Gr\"obner property was used to study the families and this could be an interesting direction to look at. Moreover, the paper of Sam and Snowden \cite{SS} contains other structures, like lingual structures, that have not been addressed in this paper. One possible question is do the lingual structures have particular meaning or application with cellular resolutions and can we find families that satisfy the conditions to have these structures.

As a last open question we pose further work on the unrestricted case of families. The approach we have proposed could be considered the naive way to deal with the requirement of having cellular resolutions over different rings, and we have not dwelt very deeply into it.  The theory of modules over polynomial ring with infinitely many variables  could offer tools to work further with the proposed setting and also make use of different representations for these families. Another direction to take with these are the cases where there is polynomial ring with a maximal number of variables, in which case the modules can be taken over that. This mixed finite case also allows different permutations of variables within the same ring, which are not morphisms of cellular resolutions in the fixed ring case.


\end{document}